      \numberwithin{equation}{section}
\theoremstyle{plain}
\newtheorem{theorem}{Theorem}[section]
      \newtheorem{lemma}[theorem]{Lemma}
      \newtheorem{proposition}[theorem]{Proposition}
      \newtheorem{corollary}[theorem]{Corollary}
      \newtheorem{definition}[theorem]{Definition}
      \newtheorem{remark}[theorem]{Remark}
\begin{document}

\begin{frontmatter}

\title{On the Transient (T) condition for Random Walk in Mixing Environment}
\runtitle{Random Walk in Mixing Environment}


\author{\fnms{Enrique} \snm{Guerra Aguilar}\ead[label=e1]{eaguerra@mat.puc.cl}\thanksref{t1}}
\thankstext{t1}{Partially Supported by CAPES PNPD20130824, CONICYT FONDECYT Postdoctorado 3180255 and 
Nucleus Millenium Stochastic Models of Complex and Disordered Systems NC130062}
\address{\printead{e1}}
\affiliation{Pontificia Universidad Cat\'{o}lica de Chile}

\runauthor{Enrique Guerra}

\begin{abstract}
We prove a ballistic strong law of large numbers and an invariance
principle for random walks in strong mixing environments, under
condition $(T)$ of Sznitman (cf. \cite{Sz01}). This weakens for the first time
Kalikow's ballisticity assumption on mixing environments and proves the existence of arbitrary finite order moments for the approximate regeneration time of F. Comets and O. Zeitouni
\cite{CZ02}. The main technical tool in the proof is the introduction
of renormalization schemes, which had only been
considered for i.i.d. environments.
\end{abstract}

\begin{keyword}[class=MSC]
\kwd[Primary ]{60K37}
\kwd[; secondary ]{82D30}
\end{keyword}

\begin{keyword}
\kwd{Random walk in random environment}
\kwd{Ballisticity Conditions}
\kwd{Strong mixing environments}
\end{keyword}

\end{frontmatter}

 \section{Introduction}
Random walk in a random environment (RWRE) is a well-known
stochastic model for random motion in random media, which
presents a wide range of applications going from DNA replication
models \cite{Ch62} up to for instance, a prototype for the study
of turbulent behavior in fluids \cite{Si82}. The model describes the
stochastic evolution of a particle on the lattice $\mathbb Z^d$,
where its transition probabilities are
in turn random. Within this framework, it is a fundamental and challenging question to
find the minimal local assumption that provides a given asymptotic behaviour for the walk.
For technical issues, the local assumption is usually strengthened to an assumption
of ballistic-type, the
target therefore is to prove a given behavior from one condition on the
environment and one ballisticity condition. In this work, assuming a mixing condition on the environment
and condition $(T)$ of Sznitman (cf. \cite{Sz01}-\cite{Sz02}), we shall prove ballistic regime
complemented with a diffusive scaling limit for the walk.

\vspace{0.5ex}
In the one-dimensional setting one can find almost complete
descriptions about RWRE asymptotic laws, scaling limits and
connections between different large scale concepts (see
\cite{Ze04}, Chapter 2 for a comprehensive review for $d=1$).
Throughout this article we focus on the higher dimensional case,
i.e. when the underlying dimension $d$ of the walk is greater than
$1$. A key role to prove our results will be played by
renormalization methods for mixing environments. The strategy of renormalization for RWRE
was introduced by Alain-Sol Sznitman in \cite{Sz00}, and further
developments can be found in subsequent articles as \cite{Sz01},
\cite{Sz02} and \cite{BDR14}, among others. In this article \textit{renormalization} for
RWRE is related to the theoretical construction of strategies that allow the walker to escape from traps (typically we are concerned with
traps which are slabs or large boxes) by the \textit{appropriate
boundary side}, with high probability. Overall, the
construction of these strategies involves the use of smaller traps
to be considered therein, which turns out a recursive procedure of
\textit{renormalization} nature. For i.i.d. random environments, estimates for exit probabilities from traps are established
with the help of the renewal structure of A-S. Sznitman and M. Zerner
\cite{SZ99}, a higher dimensional analogue of the
one previously introduced by H. Kesten in \cite{Ke77} for one-dimensional RWRE.

\vspace{0.5ex}
On the other hand, a kind of renewal structure for mixing
environments was introduced by F. Comets and O. Zeitouni in
\cite{CZ01}. This is an approximate renewal structure for general
mixing random environments. Indeed, the authors studied a quit weak
mixing assumption, the so-called \textit{cone mixing condition}.
They proved a law of large numbers for a class of strong
ballistic RWRE, where the hypotheses are: a strengthened form of
Kalikow's condition (cf. (\ref{Kalikow})), integrability conditions for the approximate regeneration time and the cone mixing assumption on the
environment (cf. \cite{CZ01}, Theorem 3.4). As the present work shows, the integrability conditions can be disposed provided we assume stronger mixing conditions on the environment. Alongside, a stronger mixing
condition on the environment has been investigated by F. Rassoul-Agha \cite{RA03},
which appears in the context of spin-glass systems at
high temperature and was introduced by R. Dobrushin and
S. Shlosman (cf. \cite{DS85}, see also \cite{Ma99} as a further
reference). Under Kalikow's condition, F. Rassoul-Agha proved a ballistic strong law of large numbers
by virtue of an appropriate extension of Kozlov's
theorem (see \cite{Ko85}). The approach to prove such extension appears when one sees
the stochastic evolution of the system from the point of view of
the particle. As a matter of fact, Rassoul-Agha's proof does not need to
assume a stronger version of Kalikow's condition, as was done in
the aforementioned result of \cite{CZ01}. However the point of view
of the particle relies on ergodic matters, making hard to visualize a proof
for the central limit theorem from this technique.

\vspace{0.5ex}
In this article we shall see that assuming condition $(T)$ along with renormalization
type of arguments, one has a Brownian scaling limit under the natural scaling of a ballistic walk. Indeed,
we shall
reconstruct or give meaning to part
of Sznitman's work \cite{Sz00}-\cite{Sz01}-\cite{Sz03} for i.i.d. environments, in a mixing setting. Thus the present article is fully
connected with the spirit of Feynman's phrase: ``There is pleasure
in recognising old things from a new viewpoint". As a result of
that recognition we will be able to weaken the ballisticity
assumption from Kalikow's to Sznitman's $(T)$ condition, proving
ballistic behaviour and a central limit theorem. Remarkably, we obtain analogously to the i.i.d. case the spirit
of a RWRE result: ballistic behaviour from one environment and one ballisticity assumptions. We also open a path for the investigation of ballistic behaviour under weaker assumptions than Kalikow's condition,
and we provide a partial answer to an open problem formulated in \cite{CZ01}
about the meaning of Sznitman
transient conditions in a mixing setting (cf. \cite{CZ01}, pp.
912-913, 6. Concluding remarks, item 3).

\vspace{0.5ex}
It is convenient at this point to fix some notation.
We only consider what is called in the RWRE literature as
a \textit{uniform elliptic random environment}, which means that
the walk has strictly uniform positive jump probabilities to each
nearest neighbour sites. More precisely, we pick an integer $d>1$
along with a positive real number $\kappa\in(0,1/(4d)]$ and denote
by $\mathcal{P}_{\kappa}$ the $2d-$dimensional simplex:
\begin{equation}\label{simplex}
  \mathcal{P}_{\kappa}:=\left\{z\in \mathbb R^{2d}: \Sigma_{1\leq i \leq 2d}\,z_i=1,\, z_i\geq 2\kappa \,\, \forall i \in[1,2d]\right\}.
\end{equation}
We consider the product space $\Omega=(\mathcal P_\kappa)^{\mathbb
Z^d}$ which is tacitly endowed with its canonical product
$\sigma$-algebra denoted by $\mathfrak{F}_{\Omega}$ and, for the
time being, fix a probability law $\mathbb P$ on
$\mathfrak{F}_{\Omega}$. Next, for a given random element $\omega
:=(\omega(y, e))_{\{y\in \mathbb Z^d, e\in \mathbb Z^d
:\,|e|=1\}}\in \Omega$, and $x\in \mathbb Z^d$, we define the
\textit{quenched law} $P_{x,\omega}$ as the law of the canonical
Markov chain $(X_n)_{n\geq0}$ with state space $\mathbb Z^d$ and
stationary transition probabilities satisfying
\begin{align*}
&P_{x,\omega}[X_0=x]=1,\\
&P_{x,\omega}[X_{n+1}=X_n+e|X_n]=\omega(X_n,e),\,\, |e|=1.
\end{align*}
One then defines the \textit{annealed law} $P_x$ of the random walk
via the semidirect product $\mathbb P\otimes P_{x,\omega}$ on the
product $\sigma-$algebra of the space $\Omega\times (\mathbb
Z^d)^{\mathbb N}$. It will be convenient to denote by $|\cdot|_1,
\, |\cdot|_2$ and $|\cdot|_{\infty}$, the $\ell_1, \, \ell_2$ and
$\ell_{\infty}$ norms, respectively. Furthermore, in this article
we will deal with distances between sets, and for instance
for $A, B \subset \mathbb Z^d$, the symbol $d_1(A, B)$ stands for
the $\ell_1$-distance between sets $A$ and $B$, i.e. $d_1(A, B):=\inf\{|x-y|_1,\, x\in A, y\in B\}$. Following X. Guo in
\cite{Gu14}, we now introduce the type of randomness on the environment of interest for us.
For this end, let us first recall the definition
of $r-$Markovian field.
\begin{definition}For $r>1$, let $\partial^r V=\{z\in \mathbb Z \setminus V: \exists y\in V, |z-y|_1\leq r\}$ be the $r$-boundary of the set $V \subset \mathbb Z$. A random environment $(\mathbb P, \mathfrak{F}_{\Omega})$ on $\mathbb Z^d$ is called $r$-Markovian if for any finite $V\subset \mathbb Z^d$,
\begin{equation*}
  \mathbb P[(\omega_{x})_{x\in V}\in \cdot|\mathfrak{F}_{V^c}]=\mathbb P[(\omega_x)_{x\in V}\in \cdot|\mathfrak{F}_{\partial ^r V}], \,\, \mathbb P-a.s.,
\end{equation*}
where $\mathfrak{F}_{\Lambda}=\sigma(\omega_x, \, x\in \Lambda)$.
\end{definition}
 Let $C$ and $g$ be positive real numbers. We will say that an $r$-Markovian field $(\mathbb P, \mathfrak{F}_{\Omega} )$ satisfies strong mixing condition \textbf{(SM)}$_{C,g}$ if for all finite subsets $\Delta\subset V \subset \mathbb Z^d$ with $d_1(\Delta, V^c)\geq r$, and $A\subset V^c$,
\begin{equation}
\label{sma}
\frac{d\mathbb P[(\omega_x )_{x\in \Delta}\in \cdot | \eta]}{d \mathbb P[(\omega_x )_{x\in \Delta}\in \cdot | \eta']}\leq \exp\left( C \sum_{x\in \partial^r \Delta, y \in \partial^r A}e^{-g|x-y|_1}\right)
\end{equation}
for $\mathbb P$-almost all pairs of configurations $\eta,
\,\eta'\in \Omega $ which agree over the
set $V^c \backslash A$. Here we have used the notation
\begin{equation*}
\mathbb P[(\omega_x )_{x\in \Delta}\in \cdot | \eta]=\mathbb P[(\omega_x )_{x\in \Delta}\in \cdot |\mathfrak{F}_{V^c}]|_{(\omega_x)_{x\in V^c}=\eta}.
\end{equation*}
We will also need a condition which is somehow weaker than the
previous one. We say an $r$-Markovian field $(\mathbb P,
\mathfrak{F}_{\Omega} )$ satisfies Guo's strong mixing condition \textbf{(SMG)}$_{C,g}$
if for all finite subsets $\Delta\subset V \subset \mathbb Z^d$
with $d_1(\Delta, V^c)\geq r$, and $A\subset V^c$,
\begin{equation}
\label{smg}
\frac{d\mathbb P[(\omega_x )_{x\in \Delta}\in \cdot | \eta]}{d \mathbb P[(\omega_x )_{x\in \Delta}\in \cdot | \eta']}\leq \exp\left( C \sum_{x\in \Delta, y \in  A}e^{-g |x-y|_1}\right)
\end{equation}
with the same notation as above.

\vspace{0.5ex}
Throughout this article, condition \textbf{(SM)}$_{C,g}$ will be
the main assumption on the environment and we will use condition
\textbf{(SMG)}$_{C,g}$ only with the purpose of using an asymptotic
\textit{more general} assumption. Strictly speaking,
\textbf{(SMG)}$_{C,g}$ is not implied by condition
\textbf{(SM)}$_{C,g}$, but in asymptotic terms it is harder to work
with \textbf{(SMG)}$_{C,g}$. The so-called Dobrushin-Sloshman condition implies \textbf{(SM)}$_{C,g}$,
for some constants $C$ and $g$ (cf. Lemma 9 of \cite{RA03}). We will not define Dobrushin-Sloshman condition and we refer to \cite{DS85}
for the original reference about this mixing assumption, and also to \cite{RA03} for a discussion more suitable for our purposes.

\vspace{0.5ex}
\noindent
We will now introduce condition \textbf{(T)}$_\ell$, where
$\ell$ is an element of the $d$-dimensional unit sphere $\mathbb S^{d-1}$ (cf.
\cite{Sz01}-\cite{Sz02}). As a result of Lemma \ref{T}, for $\ell\in
\mathbb S^{d-1}$ we can and do say that condition \textbf{(T)}$_l$
is satisfied, if there exists a neighborhood $U\subset \mathbb
S^{d-1}$ of $\ell$, so that for some $b, \tilde{b}>0$ one has
that
\begin{equation*}
\label{tgamma}
\limsup_{\substack{L\rightarrow\infty}} L^{-1}\log\left(P_0\left[\widetilde{T}_{-bL}^{l'}<T_{\tilde{b}L}^{l'}\right]\right)<0
\end{equation*}
holds, for all $l'\in U$, where we have used the standard notation: if $a\in \mathbb R$ and  $u\in \mathbb R\setminus\{0\}$, $T^u_a$ and $\widetilde{T}^u_{a}$ denote stopping times
defined as:
\begin{gather}
\label{rightst}
T^u_a:=\inf\{n\geq 0: X_n\cdot u\geq a\}\,\, \mbox{and}\,\,  \widetilde{T}^u_{a}:= T^{-u}_{-a}.
\end{gather}
We will point out that the \textit{exponential moment version} of
this condition (which is the original definition of \cite{Sz01},
page 726) does not make sense since we do not have
\textit{planar} regeneration times in mixing. Rather, we have
approximate \textit{cone} regeneration times (cf. Section
\ref{notatandprope}). The exponential moment and slab definitions
are equivalent for i.i.d environments (cf. \cite{Sz02}, Theorem 1.1).

\noindent
Our main result rests on a further assumption.
\begin{definition}
We say that assumption \textbf{(R)}$_{ g, \kappa}$ is
satisfied if:
\begin{equation}\label{Rgkappa}
g> 18\log\left(\frac{1}{\kappa}\right).
\end{equation}
\end{definition}
For i.i.d. environments one can take $g$ arbitrarily large in either: (\ref{sma}) or (\ref{smg}). On the other hand, one can construct \textit{non-degenerate} $r$-Markovian fields with properties (\ref{sma}) or (\ref{smg}) for any given intensity parameter $g>0$ (cf. \cite{Do94}-\cite{DS85}-\cite{Ma99}).

\vspace{0.2ex}
We obtain an annealed functional central limit for the natural scaling of a ballistic walk under the a priori
\textit{transient} \textbf{(T)}$_\ell$ condition.
\begin{theorem}
\label{mth3}
Let $C, g>0$ and $\ell\in \mathbb S^{d-1}$. Suppose that the RWRE
satisfies conditions \textbf{(T)}$_\ell$, either: \textbf{(SM)}$_{C,
g}$ or \textbf{(SMG)}$_{
C,g}$ and (\ref{Rgkappa}). Then
there exist a deterministic non-degenerate covariance matrix $R$
and a deterministic vector $v$ with $v\cdot \ell>0$, such that under
$P_0$; with $$ S_n(t):=\frac{X_{[nt]}-vt}{\sqrt{n}}, $$ the path
$S_n(t)$ taking values in the space of right continuous functions
possessing left limits equipped with the supremum norm, converges
in law to a standard Brownian motion with covariance matrix $R$.
\end{theorem}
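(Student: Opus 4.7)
My plan is to reduce the annealed functional CLT to a near-i.i.d.\ block decomposition based on the approximate cone regeneration times $\tau_k$ of Comets and Zeitouni \cite{CZ02}. Under \textbf{(SM)}$_{C,g}$ (resp.\ \textbf{(SMG)}$_{C,g}$), these stopping times exist almost surely on the event of transience in the direction $\ell$, and the blocks $(\tau_{k+1}-\tau_k,\,X_{\tau_{k+1}}-X_{\tau_k})$ are exactly independent up to a coupling error that is exponentially small in the cone separation parameter. Given enough moments of $\tau_1$ and of $|X_{\tau_1}|$, the LLN and the invariance principle for $(X_n)$ follow from Donsker's theorem applied to the block sums, plus a Borel--Cantelli step to absorb the mixing-induced coupling errors and a standard maximal inequality to interpolate between consecutive $\tau_k$. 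The velocity is then $v = E_0[X_{\tau_2}-X_{\tau_1}]/E_0[\tau_2-\tau_1]$ with $v\cdot\ell>0$ coming from the cone construction, and $R$ is the asymptotic covariance of the normalized block sums; its non-degeneracy is inherited from uniform ellipticity $\kappa>0$, which prevents the block increments from concentrating on a hyperplane.

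The main task is therefore to prove that $E_0[\tau_1^p]<\infty$ for every $p\ge 1$ from \textbf{(T)}$_\ell$ alone. I would proceed via a multi-scale renormalization in the spirit of Sznitman's atypical quenched exit estimates \cite{Sz01,Sz02}, but carried out inside the mixing class. Fix geometrically growing scales $L_n$ and declare a box at scale $L_n$ with base normal to $\ell$ to be \emph{bad} when the quenched exit probability through an atypical face exceeds a threshold $p_n$. The occurrence of a bad box at scale $L_{n+1}$ forces two essentially disjoint bad sub-boxes at scale $L_n$; in the i.i.d.\ case this yields $p_{n+1}\lesssim p_n^2$, but under (\ref{sma})/(\ref{smg}) one gets instead a correlation factor of the form $\exp\bigl(C\sum_{x,y}e^{-g|x-y|_1}\bigr)$ between the two sub-boxes. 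Summing this exponential kernel against the geometry of the renormalization cells produces a price that is polynomial in $\kappa^{-1}$, and assumption \textbf{(R)}$_{g,\kappa}$ is exactly what is needed so that this mixing price is dominated by the quadratic gain, making the induction close. The base step is seeded by \textbf{(T)}$_\ell$ combined with uniform ellipticity at the smallest scale.

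Once the renormalization delivers super-polynomial (in fact stretched-exponential) decay of $P_0\bigl[\widetilde T^\ell_{-bL}<T^\ell_{\tilde b L}\bigr]$ for large $L$, the Comets--Zeitouni construction transfers this decay to the tails of $\tau_1$: the cone-regeneration success probability is bounded below uniformly, and a geometric series of attempts yields $E_0[\tau_1^p]<\infty$ for every $p\ge 1$, together with the analogous bound on $|X_{\tau_1}|$. Plugging these moment estimates into the block scheme of the first paragraph completes the proof.

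The hard part is unquestionably the renormalization induction in the mixing setting: tracking the boundary-exponential correlations in (\ref{sma})--(\ref{smg}) across scales and identifying the precise threshold on $g$ at which the scheme closes. This is where the paper genuinely departs from the i.i.d.\ treatment in \cite{Sz01}--\cite{Sz03}, and where the explicit constant $18$ appearing in \textbf{(R)}$_{g,\kappa}$ is produced by balancing the mixing kernel against the combinatorial cost $\kappa^{-1}$ per walker step in the recursion.
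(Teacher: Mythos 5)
Your high-level skeleton (approximate cone regeneration times of Comets--Zeitouni, a renormalization step to bound $\tau_1$, then the CLT of \cite{CZ02}) matches the paper's strategy, but two of the technical claims are off in ways that would make your induction either fail to close or prove the wrong thing. First, the renormalization you describe is the hierarchical atypical-exit recursion of \cite{Sz01}--\cite{Sz02} (bad box at scale $L_{n+1}$ forces two disjoint bad sub-boxes at scale $L_n$, giving $p_{n+1}\lesssim p_n^2$). The paper instead carries out a one-scale column-and-tube strategy in the spirit of Proposition 3.1 of \cite{Sz00}: fix a scale $M_0$ and geometry depending on $M$, declare a block good when the quenched exit-through-the-positive-face probability is at least $1/2$, choose a column in a tube with few bad blocks, and bound the quenched probability of traversing that column from below using uniform ellipticity. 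There is no self-improving quadratic recursion; the decay in $M$ comes from counting bad boxes in columns via Lemma \ref{controlbadbox} and the choice of scales in (\ref{choisedimen}). Second and more seriously, the quantity you claim to improve, $P_0[\widetilde T^\ell_{-bL}<T^\ell_{\tilde bL}]$, already decays exponentially by the very definition of \textbf{(T)}$_\ell$; bounding it better accomplishes nothing for $\tau_1$. What is actually needed is the atypical quenched estimate of Proposition \ref{atyquenchedestimate}: an upper bound on $\mathbb P\bigl[P_{0,\omega}[X_{T_{U_M}}\cdot\ell\geq M]\leq e^{-cM^\beta}\bigr]$, i.e.\ on the annealed probability that the \emph{quenched} exit probability is atypically small. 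That is the estimate that, through Lemma \ref{basicexpest} and the exit-time/Green-function argument in the proof of Proposition \ref{tailestimate}, yields the tail bound $P_0[\tau_1>u]\leq e^{-c\kappa^L(\log u)^\alpha}+e^{-c'(\log u)^\alpha}$. Note this is super-polynomial but not stretched-exponential; it is nevertheless enough for the finite third conditional moment in (\ref{Mltau1}) required by \cite{CZ02}.

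You also omit the transversal fluctuation control of Proposition \ref{asymtrafluc}, which is an essential ingredient of Lemma \ref{controlbadbox} (without it one cannot bound $\mathbb P[z \text{ is $M_0$-bad}]$), and it is there, not in a multi-scale recursion, that assumption \textbf{(R)}$_{g,\kappa}$ and the constant $18$ are actually spent: one must choose $L$ as a function of the spatial scale so that simultaneously $\exp(e^{-gtL})\approx u^{2\gamma-2+(\gamma-1)/4}$ (the approximate-renewal factor from Proposition \ref{propare}) and $\kappa^L$ (the rate of the exponential-moment bound from Proposition \ref{expmpr}) are compatible, and $g>18\log(1/\kappa)$ is precisely the inequality that makes this double constraint solvable for $\gamma\in(5/9,1)$ and $t\in(1/2,1)$. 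Finally, the velocity is not $E_0[X_{\tau_2}-X_{\tau_1}]/E_0[\tau_2-\tau_1]$ as in the i.i.d.\ case; because the regeneration structure is only approximate, $v$ is a limit over $L\in|l|_1\mathbb N$ of $\overline E_0[X_{\tau_1^{(L)}}\,|\,D'=\infty]/\overline E_0[\tau_1^{(L)}\,|\,D'=\infty]$, and one must verify this limit exists (cf.\ (\ref{vlimit})).
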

\noindent
It is not our subject finite dependent environments, how-
ever let us mention that we can avoid the use of assumption (\ref{Rgkappa}) in that case. We refer to Remark \ref{remfdre} for a sketch of proof.

\noindent
Theorem \ref{mth3} is the first result in the direction of weakening
Kalikow's condition for a class of ballistic random walks in mixing environments. It is also for mixing environments the first time that an invariance principle is established from only one ballisticity condition. Denoting Kalikow's condition in
direction $\ell\in \mathbb S^{d-1}$ by \textbf{(K)}$_\ell$ (cf. (\ref{Kalikow})), we will prove
in Section \ref{seclln} the implication: \textbf{(K)}$_\ell$ $\rightarrow$ \textbf{(T)}$_\ell$. In general, the converse implication fails and we refer to
Section \ref{seclln} for further details.

\vspace{0.5ex}
We will now describe in some detail the contents and structure of
this article. Section \ref{notatandprope} gives equivalent formulations
for condition \textbf{(T)} and introduces the asymptotic renewal structure of Comets
and Zeitouni \cite{CZ01}. The random variable $\tau_1$ introduced there produces an \textit{almost}
regeneration property. The term almost is made precise in Section \ref{section2}, Proposition \ref{propare} and Corollary \ref{corren}. The crucial Section \ref{sectionexp} is mostly concerned with Proposition \ref{expmpr} and \ref{asymtrafluc}. These propositions show finiteness of some exponential moments for the random variable $|X_{\tau_1}|_2$ and a stretched exponential control on the probability of large fluctuation along the orthogonal space to the approximate asymptotic direction. Section \ref{sectailest} proves Theorem \ref{mth3} using the stretched exponential controls of Proposition \ref{asymtrafluc} together with renormalization to bound the tails of $\tau_1$. The last section will be devoted to prove that Kalikow's condition is stronger than \textbf{(T)}. We shall also see under Kalikow's condition that a strong law of large numbers of ballistic nature holds without the use of assumption (\ref{Rgkappa}), recovering by others methods F. Rassoul-Agha's theorem \cite{RA03} under a slightly weaker mixing hypothesis. Nevertheless, since the main assumption to construct the invariant measure $\mathbb {\widehat{P}}_{\infty}\ll \mathbb P$ in \cite{RA03} appeals to a ballistic estimate which is provided by Kalikow's condition (cf. (\ref{kball})) and the mixing condition is comparable to ours (cf. Lemma 7 in \cite{RA03}), it is possible that Rassoul-Agha's approach would apply under our assumptions as well.

\section{The Transient $(T)$ Condition and The Approximate Renewal Structure.}
\label{notatandprope}
We shall introduce the condition $(T)$ and recall the approximate regeneration time introduced in \cite{CZ01} by F. Comets and O. Zeitouni.

\subsection{On the $(T)$ Condition}
We begin with recalling the definition of directed system of slabs
as in \cite{Sz02}.
\begin{definition}
We say that $l_0,l_1, \ldots ,l_k\, \in \mathbb S^{d-1}$, $a_0=1,
a_1, \ldots, a_k\,>0$, $b_0, \ldots,b_k>0 $ generate an
$l_0$-directed systems of slabs of order $1$, when
\begin{itemize}
  \item $l_0, l_1\ldots, l_k$ generate $\mathbb R^d$
  \item $\mathcal D=\{x\in \mathbb R^d: x\cdot l_0\in [-b_0,1], l_i\cdot x\geq -b_i, i\in[1,k]\}\,\subset \{x\in \mathbb R^d: l_i\cdot x<a_i, \mathbf{\forall} i\in[1,k]\}$
  \item $\limsup_{\substack{M\rightarrow \infty}}M^{-1}\,\log P_0\left[\widetilde{T}_{-b_iM}^{l_i}<T_{a_iM}^{l_i}\right]<0$,
  for $i\in [0,k]$, with the convention $a_0=1$.
\end{itemize}
\end{definition}

For positives real numbers $L,\,L'$ and $l\in \mathbb S^{d-1} $, we introduce the box
$B_{L,L',l}(x)$ as
\begin{equation}
\label{generalboxes}
B_{L,L',l}(x):=
x+R\left(\left(-L,L\right)\times\left(-L',L'\right)^{d-1}\right)
\cap\mathbb{Z}^d,
\end{equation}
where $R$ is a rotation of $\mathbb R^d$ with $R(e_1)=l$ (the specific form of such
a rotation is immaterial for our purposes) and $x\in \mathbb Z^d$. For $V\subset \mathbb Z^d$, we set $\partial V=\partial^1 V$. Then for a given box $B_{L,L',l}(x)$ we define \textit{its positive boundary} $\partial ^+B_{L,L',l}(x)$ by
$$
\partial ^+B_{L,L',l}(x):= \partial B_{L,L',l}(x)\cap\left\{y\in \mathbb Z^d: (y-x)\cdot l\geq L\right\}.
$$
We also introduce for $A\subset \mathbb Z^d$ the exit time $T_A$ and the entrance time $H_A$ via:
\begin{align}
\nonumber
T_A&:=\inf\{n\geq 0:\, X_n\notin A\}\,\, \mbox{and }\\
\label{enandex}
H_A&:=\inf\{n\geq0:\, X_n\in A\}.
\end{align}
\medskip
\noindent
We can then prove
\begin{lemma}\label{T}
The following assertions are equivalents:
\begin{itemize}
\item [(i)]There exist data $l_0,l_1, \ldots ,l_k\, \in \mathbb S^{d-1}$, $a_0=1, a_1, a_2, \ldots, a_k\,>0$, $b_0,b_1, \ldots,$ $b_k>0$ generating an $l_0$-directed systems of slabs of order $1$.
\item [(ii)]For some positive constants $b$ and $\hat{r}$, and large $M$, there are finite subsets $\Delta_M\subset \mathbb Z^d$, with $0\in \Delta_M\subset \{x\in \mathbb Z^d: x\cdot l_0\geq -bM\}\cap \{x\in \mathbb R^d: |x|_2\leq \hat{r}M\}$ and
    $$
    \limsup_{\substack{M\rightarrow \infty}}M^{-1}\, \log P_0\left[X_{T_{\Delta_M}}\notin \partial^+\Delta_M\right]<0,
    $$
where $\partial^+\Delta_M=\partial \Delta \cap \{x\in \mathbb R^d:
x\cdot l_0\geq M \}$.
\item [(iii)]For some $\mathfrak{r}>0$, one has
$$
\limsup_{\substack{M\rightarrow\infty}}\, M^{-1}\log P_0\left[X_{T_{B_{M, \mathfrak{r}, l_0}(0)}}\notin \partial^+ B_{M, \mathfrak{r}M, l_0}(0)\right]<0.
$$
\end{itemize}
Furthermore, in case of any of them holds, we say that
\textbf{(T)}$_{l_0}$ (to be read as condition $T$ in direction
$l_0$) holds.
\end{lemma}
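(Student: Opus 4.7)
The plan is to establish the cycle $(\text{iii})\Rightarrow(\text{i})\Rightarrow(\text{ii})\Rightarrow(\text{iii})$, following the equivalence chain of Sznitman's Theorem 1.1 in \cite{Sz02}. The first two implications are essentially geometric and transfer unchanged to the mixing setting; the main obstacle lies in $(\text{ii})\Rightarrow(\text{iii})$, where the mixing hypothesis must be used.

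For $(\text{iii})\Rightarrow(\text{i})$, a directed slab system is read off directly from the box $B_{M,\mathfrak{r}M,l_0}(0)$: I take $l_0$ itself as the principal direction with $a_0=1$ and $b_0$ a small positive number, augmented by $-l_0$ and by the $2(d-1)$ unit vectors $\pm l_0^{\perp,j}$ orthogonal to $l_0$; hypothesis (iii) together with a union bound over the $2(d-1)+1$ non-positive faces of the box gives the slab condition for each new direction. For $(\text{i})\Rightarrow(\text{ii})$, I take $\Delta_M:=M\mathcal{D}\cap\mathbb{Z}^d$ with $\mathcal{D}$ the polyhedron from the slab system. Since $l_0,l_1,\ldots,l_k$ span $\mathbb{R}^d$, $\mathcal{D}$ is bounded, so $\Delta_M\subset\{|x|_2\leq\hat{r}M\}$; the inclusion $\Delta_M\subset\{x\cdot l_0\geq -b_0 M\}$ is built in. An exit of $\Delta_M$ through a non-positive face forces a crossing of one of the slabs $\{l_i\cdot x\leq -b_i M\}$ with $i\geq 1$, and a union bound combined with the slab condition completes this step.

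The main obstacle is $(\text{ii})\Rightarrow(\text{iii})$: I would iterate exits from translated copies of $\Delta_{M'}$, with $M':=M/\max(1,2b)$. Set $U_0:=0$ and recursively
\begin{equation*}
U_{j+1}:=\inf\{n\geq U_j:\,X_n\notin X_{U_j}+\Delta_{M'}\}.
\end{equation*}
A successful step, meaning $X_{U_{j+1}}-X_{U_j}\in\partial^+\Delta_{M'}$, advances $X_{U_j}\cdot l_0$ by at least $M'$ while displacing the walk orthogonally by at most $\hat{r}M'$. By the choice of $M'$, the walker remains within the $l_0$-slab $\{|x\cdot l_0|\leq M\}$ of the box during any single step; after a bounded number $N=N(b)$ of successful steps the walker has crossed the box in the $l_0$-direction. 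Choosing $\mathfrak{r}>N\hat{r}$ ensures that the accumulated orthogonal displacement stays within $\mathfrak{r}M$, so that the first exit from $B_{M,\mathfrak{r}M,l_0}(0)$ occurs through its positive face.

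The technical subtlety is bounding the conditional failure probability at a single iteration. In the i.i.d.\ case this is immediate from the annealed strong Markov property and the independence of the environment on $X_{U_j}+\Delta_{M'}$ from the past. In our mixing setting, I would combine the quenched strong Markov property with the $\mathbb{Z}^d$-shift invariance of $\mathbb{P}$ to identify the \emph{unconditional} annealed probability of bad exit from $X_{U_j}+\Delta_{M'}$ with the one bounded by (ii) at the origin, and then invoke \textbf{(SM)}$_{C,g}$ (or \textbf{(SMG)}$_{C,g}$) to control the Radon-Nikodym derivative between the conditional and unconditional environment laws on the unvisited portion of $X_{U_j}+\Delta_{M'}$. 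The resulting correction is at most sub-exponential in $M$ and is therefore absorbed into the exponential decay coming from (ii); a union bound over the $N=O(1)$ iterations yields the exponential decay required by (iii).
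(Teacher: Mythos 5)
Your proposal for $(\text{iii})\Rightarrow(\text{i})$ does not work. You propose supplementing $l_0$ by $-l_0$ and by $2(d-1)$ unit vectors orthogonal to $l_0$, but none of these can satisfy the slab-decay requirement $\limsup M^{-1}\log P_0[\widetilde T^{l_i}_{-b_iM}<T^{l_i}_{a_iM}]<0$. For $l_i=-l_0$ the event $\widetilde T^{-l_0}_{-b_iM}<T^{-l_0}_{a_iM}$ is precisely the event of reaching level $b_iM$ before level $-a_iM$ in direction $l_0$, and under a transience assumption in direction $l_0$ its probability is close to $1$, not exponentially small. For $l_i\perp l_0$, condition $(\text{iii})$ controls the walk only up to the exit time of $B_{M,\mathfrak{r}M,l_0}(0)$; it says nothing about the walk's orthogonal wandering afterwards, so it gives no bound whatsoever on the unbounded-time event $\widetilde T^{l_i}_{-b_iM}<T^{l_i}_{a_iM}$. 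The only directions in which $(T)$ can deliver the slab property are those in an open neighbourhood of $l_0$; the paper accordingly uses the perturbed directions $l_{\pm i}=(l_0\pm\alpha R(e_i))/|l_0\pm\alpha R(e_i)|_2$ for small $\alpha>0$ and derives their slab estimates from $(\text{iii})$ via a cone-trapping argument (citing \cite{GR17}, Proposition 4.2).

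Your proposal for $(\text{ii})\Rightarrow(\text{iii})$ is also problematic. You plan to invoke \textbf{(SM)}$_{C,g}$ or \textbf{(SMG)}$_{C,g}$ to compare the conditional and unconditional environment laws on the unvisited part of $X_{U_j}+\Delta_{M'}$, but the consecutive regions $X_{U_{j-1}}+\Delta_{M'}$ and $X_{U_j}+\Delta_{M'}$ overlap (since $0\in\Delta_{M'}$, the point $X_{U_j}$ itself has just been visited), and the conditioning region is adjacent to the target region, so the boundary-pair sums $\sum e^{-g|x-y|_1}$ in the mixing bound contain order-one terms and scale polynomially in $M$; the resulting correction factor is of size $\exp(\mathrm{poly}(M))$, which destroys the exponential decay coming from $(\text{ii})$. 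The paper avoids this entirely and, notably, does not use the mixing hypothesis in this lemma at all: it introduces deterministic environment events $G_i$ demanding that the quenched good-exit probability exceed $1-e^{-\tilde cM/2}$ simultaneously at all sites of a boundary layer $F_i$, bounds $\mathbb P[(G_i)^c]$ by translation invariance of $\mathbb P$, Chebyshev's inequality, the annealed estimate from $(\text{ii})$, and a polynomial union bound, and then iterates with the quenched strong Markov property; since the events $G_i$ concern only the environment, the trajectory never tilts the environment law and the overlap issue disappears. Your $(\text{i})\Rightarrow(\text{ii})$ step, taking $\Delta_M=M\mathcal D\cap\mathbb Z^d$ and union-bounding over the slabs, is correct and coincides with the paper's reference to \cite{Sz02}.
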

\begin{proof}
The proof of (i)$\Rightarrow$(ii) can be found in \cite{Sz02}, pp
516-517. Therefore, we turn to prove (ii)$\Rightarrow$(iii). By
(ii), there exist $b, \hat{r}>0$ , so that for large $M$ there are
finite subsets $\Delta_M$ with $0\in\Delta _M \subset \{x\in
\mathbb Z^d: x\cdot l_0\geq -bM\}\cap \{x\in \mathbb R^d: |x|_2\leq
\hat{r}M\}$ and
$$
    \limsup_{\substack{M\rightarrow \infty}}M^{-1}\, \log P_0\left[X_{T_{\Delta_M}}\notin \partial^+\Delta_M\right]<0.
$$
Therefore, one can find a constant $\widetilde{c}$ so that for
all large $M$:
$$
   P_0\left[X_{T_{\Delta_M}}\notin \partial^+\Delta_M\right]<e^{-\widetilde{c}M}.
$$
Furthermore, by taking the intersection of the set $\Delta_M$
with $\{x\in \mathbb Z^d: \,x\cdot l_0<M\}$, without loss of
generality we can and do assume that $\Delta_M\subset \{x\in
\mathbb Z^d: \, x\cdot l_0<M \}$. Consider the box
$\widetilde{B}_{M, \hat{r},b, l_0}(0)$ defined by
$$
\widetilde{B}_{M, r, b, l_0}(0)=\widetilde{R}\left((-bM,M)\times(-\hat{r}M, \hat{r}M)^{d-1}\right),
$$
where $\widetilde{R}$ is a rotation on $\mathbb R^d$ with
$\widetilde {R}(l_0)=e_1$. We have $\Delta_M \subset
\widetilde{B}_{M, \hat{r}, b, l_0}(0)$, and consequently for large
$M$,
\begin{equation}
\label{firstest}
 P_0\left[X_{T_{\widetilde{B}_{M, \hat{r}, b, l_0}(0)}}\in
\partial^+\widetilde{B}_{M, \hat{r}, b, l_0}(0)\right]\geq
P_0\left[X_{T_{\Delta_M}}\in \partial^+\Delta_M\right]>
1-e^{-\widetilde{c}M}.
\end{equation}
Notice that if $b\leq1$, we choose $\mathfrak{r}$ in
(iii) as $r$, and we finish the proof. Otherwise, we can proceed as
follows: we take $N=bM$ and consider the box $B_{N,
\hat{r}([b]+1)N, l_0}(0)$. We introduce for integer $i\in [1,[b]]$,
a sequence $(T_i)_{1 \leq i \leq [b]}$ of $(\mathcal F_n)_{n\geq0}-$stopping times via
\begin{align}
\nonumber
T_1&=T_{\widetilde{B}_{M, \hat{r}, b, l_0}(0)}, \,\, \mbox{ and for
$i>1$ }\\
\label{stoppinti}
T_i&=T_{\widetilde{B}_{M, \hat{r}, b,
l_0}(0)}\circ\theta_{T_{i-1}}+T_{i-1}.
\end{align}
As a result, we have
\begin{align}
\nonumber
  P_0\bigg[X_{T_{B_{N, \hat{r}([b]+1)N, l_0}(0)}}\in \partial^+B_{N, \hat{r}([b]+1)N, l_0}(0)\bigg]
  \geq & P_0\bigg[X_{T_{\widetilde{B}_{M, \hat{r}, b, l_0}(0)}}\in \\
  \label{caixas}
  \partial^+\widetilde{B}_{M, \hat{r}, b, l_0}(0),\ldots ,\big(X_{T_{\widetilde{B}_{M, \hat{r}, b, l_0}(0)}}\in  \partial^+\widetilde{B}_{M, \hat{r}, b, l_0}&(0)\big)\circ \theta_{T_{[b]}}\bigg].
\end{align}
It is convenient at this point to introduce \textit{boundary sets}
$F_i$, $i\in [1, [b]]$ as follows:
\begin{align*}
  F_1=&\partial^+ B_{M, \hat{r}, b, l_0}(0)\, \, \mbox{ and for $i>1$}\\
  F_i=&\bigcup_{y\in F_{i-1}}\partial^+B_{M, \hat{r}, b, l_0}(y),
\end{align*}
where $B_{M, \hat{r}, b, l_0}(y):=y+ B_{M, \hat{r}, b, l_0}(0)$. We
also introduce for $i\in[1,[b]]$, \textit{environment events} $G_i$
via
\begin{equation*}
  G_i=\left\{\omega\in \Omega:\, P_{y,\omega}\left[X_{T_{B_{M,\hat{r},b,l_0}(y)}}\in\partial^+B_{M,\hat{r},b,l_0}(y)\right]\geq 1 -e^{-\frac{\widetilde{c}}{2}M},\,\forall y\in F_i  \right\}.
\end{equation*}
Observe that the right-hand side of inequality (\ref{caixas}) is greater
than
\begin{align*}
   P_0&\left[X_{T_{\widetilde{B}_{M, \hat{r}, b, l_0}(0)}}\in \partial^+\widetilde{B}_{M, \hat{r}, b, l_0}(0),\left(X_{T_{\widetilde{B}_{M, \hat{r}, b, l_0}(0)}}\in \partial^+\widetilde{B}_{M, \hat{r}, b, l_0}(0)\right)\circ \theta_{T_1},\right.\\
  &\left. \ldots, \left(X_{T_{\widetilde{B}_{M, \hat{r}, b, l_0}(0)}}\in \partial^+\widetilde{B}_{M, \hat{r}, b, l_0}(0)\right)\circ \theta_{T_{[b]}}\mathds{1}_{G_{[b]}}\right]\\
  &=\sum_{y\in F_{[b]}}\mathbb E\left[P_{0,\omega}\left[X_{T_{\widetilde{B}_{M, \hat{r}, b, l_0}(0)}}\in \partial^+\widetilde{B}_{M, \hat{r}, b, l_0}(0),\ldots \right.\right.\\
  &\left.\left.\ldots, X_{T_{[b]}}=y\right]P_{y,\omega}\left[X_{T_{\widetilde{B}_{M, \hat{r}, b, l_0}(y)}}\in \partial^+\widetilde{B}_{M, \hat{r}, b, l_0}(y)\right]\mathds{1}_{G_{[b]}}\right]\\
  &\geq\left(1-e^{-\frac{\widetilde{c}}{2}M}\right)\left(P_0\left[X_{T_{\widetilde{B}_{M, \hat{r}, b, l_0}(0)}}\in \partial^+\widetilde{B}_{M, \hat{r}, b, l_0}(0),\ldots\right.\right.\\
  &\left.\left.\ldots, \left(X_{T_{\widetilde{B}_{M, \hat{r}, b, l_0}(0)}}\in \partial^+\widetilde{B}_{M, \hat{r}, b, l_0}(0)\right)\circ \theta_{T_{[b]-1}} \right]-\mathbb P[(G_{[b]})^c]\right)
\end{align*}
where we have made use of the Markov property. Iterate this argument
recursively to obtain:
\begin{align}\nonumber
P_0\big[X_{T_{B_{N, \hat{r}([b]+1)N, l_0}(0)}}\in & \partial^+B_{N,
\hat{r}([b]+1)N, l_0}(0)\big]\\
\label{finalcaixa}
&\geq\left(1-e^{-\frac{\widetilde{c}M}{2}}\right)^{[b]+1}-
\sum_{i=1}^{[b]}\left(1-e^{-\frac{\widetilde{c}M}{2}}\right)^{[b]-i}\mathbb P[(G_i)^c].
\end{align}
Notice that using (\ref{firstest}) along with Chebysev's inequality, we have for $i\in [1,[b]]$
and large $M$,
\begin{equation}\label{Gi}
\mathbb P[(G_i)^c]\leq\sum_{y\in F_i}\mathbb P\left[P_{y,\omega}\left[X_{T_{B_{M,\hat{r},b,l_0}(y)}}\notin \partial^+B_{M,\hat{r},b,l_0}(y)\right]>e^{-\frac{\widetilde{c}M}{2}}\right]\leq e^{-\frac{\widetilde{c}M}{4}}.
\end{equation}
From (\ref{finalcaixa}), the fact that $b$ is finite and
independent of $M$ and the estimate (\ref{Gi}); there exists a
constant $w>0$, so that for large $N$
\begin{equation*}
P_0\left[X_{T_{B_{N, \hat{r}([b]+1)N, l_0}(0)}}\in \partial^+B_{N,
\hat{r}([b]+1)N, l_0}(0)\right]\geq 1-e^{-wN}
\end{equation*}
and this ends the proof of the implication (ii) implies (iii) by taking $\mathfrak{r}=\hat{r}([b]+1)$.

\vspace{0.5ex}
\noindent
To prove the implication (iii)$\Rightarrow$(i), we fix a rotation
$R$ on $\mathbb R^d$, with $R(e_1)=l_0$ and such that $R$ is the
underlying rotation of hypothesis in (iii). For small $\alpha$ we
define $2(d-1)$-\textit{directions} $l_{+i}$ and $l_{-i}$, $i\in
[2,d]$
\begin{gather*}
  l_{+i}=\frac{l_0+\alpha R(e_i)}{|l_0+\alpha R(e_i)|_2} \,\, \mbox{ and }
  l_{-i}=\frac{l_0-\alpha R(e_i)}{|l_0-\alpha R(e_i)|_2}.
\end{gather*}
Following the same type of argument as in \cite{GR17}, Proposition
4.2, pp 13-15; but using exponential decay instead of polynomial
one; we conclude that there exists a small and positive $\alpha$,
so that for each $i\in [2,d]$ there are some $r_i>0$, with
\begin{equation}\label{piexp}
\limsup_{\substack{M\rightarrow}\infty}\,M^{-1} \log P_0\left[X_{T_{B_{M, r_i M, l_{\pm i}}(0)}}\notin \partial^+B_{M, r_i M, l_{\pm i}}(0)\right]<0,
\end{equation}
Thus, (\ref{piexp}) finishes the proof by taking
\begin{align*}
&a_0=1,\,a_1=a_2=\ldots=a_{2(d-1)}=\frac{1}{2},\\
&b_0=b_1=\ldots=b_{2(d-1)}=1 ,\\ &l_0,\,
l_1=l_{+1},l_2=l_{-1},\ldots, l_{2(d-1)-1}=l_{+(d-1)},
l_{2(d-1)}=l_{-(d-1)},
\end{align*}
and then observing that for integer $i\in[0,2(d-1)]$
\begin{equation*}
 P_0\left[\widetilde{T}_{-b_iM}^l<T_{a_iM}^l\right]\leq P_0\left[X_{T_{B_{M, r_i M,l_{i} }(0)}}\notin \partial^+B_{M, r_i M,l_{i} }(0)\right].
\end{equation*}
\end{proof}

\subsection{Approximate Renewal Structure}
\label{Apre}
Throughout this section we assume that condition \textbf{(T)}$_\ell$ holds, where $\ell\in \mathbb S^{d-1}$. We observe that one can and does assume $\ell$ so that there exists $h\in (0, \infty)$ with $h\ell=:l\in \mathbb Z^d$. This is not a further restriction since by item $i)$ of Lemma \ref{T}, the set $B\subset\mathbb S^{d-1}$ of directions $\ell\in B$ such that \textbf{(T)}$_\ell$ holds contains an open set, thus writing
\begin{equation*}
  A=\{u\in \mathbb S^{d-1}:\exists t \in (0,\infty) \,\,\mbox{with}\,\, tu\in \mathbb Z^{d}\}.
\end{equation*}
one has that $A$ is dense in $ \mathbb S^{d-1}$. Therefore we assume condition \textbf{(T)}$_\ell$, where $\ell$ is as above and choose a fixed $h>0$ with
\begin{equation}\label{rul}
l:=h\ell\in \mathbb Z^d.
\end{equation}
We will denote the canonical orthonormal basis by $e_i, \, i\in[1,d]$ and consider the probability
measure $\overline P_0$ given by
\begin{equation*}
\overline{P}_0:=\mathbb P\otimes Q\otimes P_{\omega, \varepsilon}^0\,\,\,\, \mbox{on}\,\,\, \Omega\times (\mathcal W)^{\mathbb N} \times (\mathbb Z^d)^{\mathbb N},
\end{equation*}
where $\mathcal{W}=\{z:\, z=\pm e_i, \, \mbox{ for some }\, i\in
[1,d]\}\cup\{0\}$, which is defined as follows: $Q$ is a product
probability measure such that with each sequence
$\varepsilon=(\varepsilon_1, \varepsilon_2, \,\ldots)\in (\mathcal
W)^{\mathbb N}$, for $i\in [1,d]$ we have $Q[\varepsilon_1=\pm
e_i]=\kappa$ and $Q[\varepsilon_1=0]=1-2d \kappa$. Then for fixed
random elements $\varepsilon\in (\mathcal W)^{\mathbb N}$ and
$\omega \in \Omega$, we define $P_{\omega, \varepsilon}^0$ as the
law of the Markov chain $(X_n)_{n\geq 0}$ with state space in
$\mathbb Z^d$, starting from $0\in \mathbb R^d$ and transition
probabilities
\begin{equation*}
  P_{\omega, \varepsilon}^0[X_{n+1}=X_n+e| X_n]=\mathds{1}_{\{\varepsilon_{n+1}=e\}}+\frac{\mathds{1}_{\{\varepsilon_{n+1}=0\}}}{1-2d\kappa}\left(\omega(X_n,e)-\kappa\right),
\end{equation*}
where $e$ is an element of the set $\{y\in \mathbb Z^d:\,
|y|_2=1\}$. The importance of this auxiliary probability space
stems from the easy to verify fact that the law of $(X_n)_{n\geq0}$
under $Q\otimes P_{\omega, \varepsilon}^0$ coincides with the law
under $P_{0,\omega}$, while the law under $\mathbb P \otimes
P_{\omega, \varepsilon}^0$ coincides with $P_0$.

\vspace{1ex}
\noindent
Define now the sequence $\bar{\varepsilon}$ of length $|l|_1$ in
the following way:
$\bar{\varepsilon}_1=\bar{\varepsilon}_2=\ldots=\bar{\varepsilon}_{|l_1|}=\mbox{sign}(l_1)e_1$,
$\bar{\varepsilon}_{|l_1|+1}=\bar{\varepsilon}_{|l_1|+2}=\ldots=\bar{\varepsilon}_{|l_1|+|l_2|}=\mbox{sign}(l_2)e_2$,
$\ldots , \,
\bar{\varepsilon}_{|l|_1-|l_d|+1}=\ldots=\bar{\varepsilon}_{|l|_1}=\mbox{sign}(l_d)e_d$.
Define for $\zeta>0$ small, $x\in \mathbb Z^d$, the cone
$C(x,l,\zeta)$ by
\begin{equation}\label{cone}
C(x,l,\zeta):=\{y\in \mathbb Z^d: (y-x)\cdot l \geq \zeta |l|_2
|y-x|_2 \}.
\end{equation}
We will assume that $\zeta$ is small enough in order to satisfy the
following requirement:
\begin{equation*}
  \bar{\varepsilon}_1,\bar{\varepsilon}_1+\bar{\varepsilon}_2,\, \ldots\, , \bar{\varepsilon}_1+\bar{\varepsilon}_2+\ldots+\bar{\varepsilon}_{|l|_1}\in C(0, l, \zeta).
\end{equation*}
For $L\in |l|_1\mathbb N$ we will denote by
$\bar{\varepsilon}^{(L)}$ the vector
\begin{equation*}
\bar{\varepsilon}^{(L)}=\overbrace{(\bar{\varepsilon}, \bar{\varepsilon},\, \ldots\, ,\bar{\varepsilon},\bar{\varepsilon})}^{L/|l|_1-\mbox{times}}
\end{equation*}
of length equal to $L$. Setting
\begin{equation*}
  D':=\inf\{n\geq0:\, X_n\notin C(X_0, l , \zeta)\},
\end{equation*}
we have:

\begin{lemma}\label{Dundert}
Assume condition \textbf{(T)}$_\ell$, and fix $\mathfrak{r}$ and a rotation $R$ as in item $iii)$ of
Lemma \ref{T}. Then there exists $c_1>0$ such that if $\zeta
<\min\left\{\frac{1}{9d}, \frac{1}{3d\mathfrak{r}}\right\}$, then
$$
P_0[D'=\infty]\geq c_1.
$$
\end{lemma}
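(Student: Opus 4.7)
The plan is to combine a short deterministic push from the origin with an iteration, on geometrically growing scales, of the box--exit estimate of Lemma~\ref{T}(iii). The key geometric input, checked by a direct computation from the definition of the cone, is that under the assumption $\zeta<\min\{1/(9d),1/(3d\mathfrak r)\}$ there is a constant $c_\star=c_\star(\mathfrak r,d)>1$ such that $x\cdot l_0\geq c_\star L$ implies $B_{L,\mathfrak r L,l_0}(x)\subset C(0,l,\zeta)$. Indeed, at axial depth $s$ the cone has transverse half-width $s\sqrt{\zeta^{-2}-1}$, which the bound on $\zeta$ makes much larger than the box's transverse extent $\mathfrak r L\sqrt{d-1}$ as soon as $s\gtrsim L$. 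To place the walker into that regime cheaply, I invoke uniform ellipticity: with probability at least $(2\kappa)^{N|l|_1}$ the walk follows $N$ consecutive copies of $\bar\varepsilon$ in its first $N|l|_1$ steps, thereby reaching $Nl$; by the hypothesis on $\zeta$ all intermediate positions of this push lie in $C(0,l,\zeta)$.

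Next fix a large $L_0$ (to be chosen) and a growth rate $r\in(1,2)$, set $L_k=L_0\,r^k$, and pick $N=\lceil c_\star L_0/h\rceil$ so that at time $\tau_0:=N|l|_1$ one has $X_{\tau_0}=Nl$ with $X_{\tau_0}\cdot l_0\geq c_\star L_0$. Define recursively
\begin{equation*}
\tau_{k+1}:=T_{B_{L_k,\mathfrak r L_k,l_0}(X_{\tau_k})}\circ\theta_{\tau_k}+\tau_k,\qquad E_k:=\bigl\{X_{\tau_k}\in\partial^+B_{L_{k-1},\mathfrak r L_{k-1},l_0}(X_{\tau_{k-1}})\bigr\}.
\end{equation*}
The sub-geometric choice $r<2$ guarantees that the telescoping lower bound $X_{\tau_k}\cdot l_0\geq Nh+\sum_{j<k}L_j$ outpaces $c_\star L_k$ for every $k$, so by the geometric fact of the preceding paragraph, on $E_k$ the whole trajectory between $\tau_{k-1}$ and $\tau_k$ stays inside $C(0,l,\zeta)$. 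Since $\mathbb P$ is stationary, translation invariance of the annealed law and Lemma~\ref{T}(iii) yield $P_{X_{\tau_{k-1}}}[E_k]\geq 1-e^{-wL_{k-1}}$ for $L_0$ large. This bound is iterated exactly along the template of the proof of (ii)$\Rightarrow$(iii) above, by introducing good-environment events
\[
G_k:=\Bigl\{\omega:\,P_{y,\omega}\bigl[X_{T_{B_{L_k,\mathfrak r L_k,l_0}(y)}}\in\partial^+B_{L_k,\mathfrak r L_k,l_0}(y)\bigr]\geq 1-e^{-wL_k/2}\ \forall y\in F_k\Bigr\},
\]
where $F_k$ is the finite collection of admissible positions at step $k$; Chebyshev together with a cardinality bound on $F_k$ gives $\mathbb P[G_k^c]\leq e^{-wL_k/4}$.

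Because $L_k$ grows geometrically, both $\sum_k e^{-wL_k}$ and $\sum_k\mathbb P[G_k^c]$ are dominated by a constant times $e^{-wL_0/4}$, so choosing $L_0$ large the quenched Markov iteration delivers $P_{X_{\tau_0}}\bigl[\bigcap_k E_k\bigr]\geq 1/2$. Combining with the initial push via the strong Markov property,
\[
P_0[D'=\infty]\geq (2\kappa)^{N|l|_1}\cdot\tfrac12=:c_1>0.
\]
The main obstacle is to organize the iteration so that the walker's \emph{full} trajectory, not merely the box exit points, stays in the cone. This forces the use of the initial push, since each box $B_{L,\mathfrak r L,l_0}(y)$ extends an $l_0$-distance $L$ backward from $y$ and without a sufficient starting depth the first few boxes would reach past the cone's apex; it also forces the sub-geometric rate $r<2$, so that the telescoping sum $\sum_{j<k}L_j$ dominates $L_k$ and keeps every box comfortably inside the cone. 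The remaining work is a balancing of the three costs -- the initial push probability $(2\kappa)^{N|l|_1}$, the box-exit failures $e^{-wL_k}$, and the bad-environment probabilities $\mathbb P[G_k^c]$ -- all of which are tuned by the single parameter $L_0$.
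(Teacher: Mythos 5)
Your argument is in the same spirit as the result the paper appeals to, but it is organized quite differently. The paper's proof is short: it shows that the cone $C(0,l,\zeta)$ contains a ``flat cone'' $\mathcal C(0,\alpha,\ell)$ (an intersection of half-spaces), observes that item (iii) of Lemma~\ref{T} implies the weaker polynomial condition $(WP)$ of \cite{GR17}, and then directly cites Proposition~5.1 of \cite{GR17}, which delivers the positivity of $P_0[D'=\infty]$. You instead give a self-contained renormalization on geometrically growing scales $L_k=L_0 r^k$ with $r\in(1,2)$, paying an initial uniform-ellipticity push of cost $(2\kappa)^{N|l|_1}$ to enter the cone, then iterating the box-exit estimate of Lemma~\ref{T}(iii) through good-environment events $G_k$ and a quenched Markov decomposition exactly as in the proof of (ii)$\Rightarrow$(iii). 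Both the error series $\sum_k e^{-wL_k}$ and the series of bad-environment probabilities $\sum_k \mathbb P[G_k^c]$ are summable and small for $L_0$ large, so the iteration closes. What the paper's route buys is brevity and the fact that it only requires polynomial decay (the reduction to $(WP)$); what your route buys is a transparent, entirely local proof that uses the full exponential strength of $(T)$ and makes the final constant $c_1$ explicit in terms of the push cost.

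There is, however, an imprecision in your ``geometrical fact'' that a careful write-up must repair. The claim that $x\cdot l_0\geq c_\star L$ alone implies $B_{L,\mathfrak r L,l_0}(x)\subset C(0,l,\zeta)$ is false as stated: if $x$ has large transverse coordinate $|\Pi_{\ell}(x)|$, the box escapes the cone regardless of how deep $x$ lies along $l_0$. Your verification (comparing the cone's transverse half-width $s\sqrt{\zeta^{-2}-1}$ at depth $s$ to the box's transverse extent $\mathfrak r L\sqrt{d-1}$) implicitly places the box's centre on the cone axis. In the iteration the transverse drift accumulates: $|\Pi_{\ell}(X_{\tau_k})|\leq\mathfrak r\sqrt{d-1}\sum_{j<k}L_j$, and the geometric containment you actually need is $\bigl(Nh+\sum_{j<k}L_j-L_k\bigr)(1-\zeta)\geq\zeta\,\mathfrak r\sqrt{d-1}\bigl(\sum_{j<k}L_j+L_k\bigr)$ for every $k$. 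Because both the axial depth and the transverse drift grow proportionally to $S_k=\sum_{j<k}L_j$, and $S_k/L_k\to 1/(r-1)>1$ for $r<2$, this inequality does hold under $\zeta<\min\{1/(9d),1/(3d\mathfrak r)\}$ once $c_\star$ and $r$ are tuned; but the lemma should be stated with an explicit bound on $|\Pi_\ell(x)|$ in terms of $x\cdot l_0$, not with $x\cdot l_0\geq c_\star L$ alone. Once that correction is made, your proof is sound.
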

\begin{proof}
For $x\in \mathbb Z^d$ and $\alpha>0$, we define the \textit{flat
cone} $\mathcal{C}(x, \alpha, \ell)$ by
\begin{align}\label{flatcone}
\mathcal{C}(x, \alpha, \ell)=&\left\{y\in \mathbb Z^d:\, (y-x)\cdot\frac{\ell+\alpha R(e_i)}{|\ell+\alpha R(e_i)|_2}\geq 0,\right.\\
\nonumber
 & \left. (y-x)\cdot\frac{\ell-\alpha R(e_i)}{|\ell-\alpha R(e_i)|_2}\geq 0, \,\forall i\in[2,d] \right\}.
\end{align}
It is clear when $y\in \mathcal C(x,\alpha,\ell)$, using the
fact that for $i\in [2,d]$, $|\ell\pm \alpha R(e_i)|_2>0$ (since
$R(e_1)=\ell$), if $\alpha<1$ one has for all $i\in [2,d]$:
\begin{align*}
 (y-x)\cdot l\geq& \alpha|(y-x)\cdot R(e_i)|\\
 (y-x)\cdot l\geq&  \frac{\alpha}{d}  \sum_{i=1}^d|(y-x)\cdot R(e_i)|\geq \frac{\alpha}{d} |y-x|_2.
\end{align*}
As a result $\mathcal{C}(x, \alpha, \ell)\subset C(x, \ell,
\frac{\alpha}{d})=C(x,l, \frac{\alpha}{d})$. On the other hand, the
polynomial condition $(WP)$ of \cite{GR17} page 11, is obviously
implied by $iii)$ of Lemma \ref{T}. We finish the proof by applying
Proposition 5.1 of \cite{GR17}.
\end{proof}

We choose $\zeta>0$ satisfying the hypotheses of Lemma \ref{Dundert}. For each $L\in |l|_1\mathbb N$,
we define $S_0=0$, and denoting by $\theta$ the canonical time
shift, we set
\begin{align*}
S_1&=\inf\left\{n\geq L:\, X_{n-L}\cdot l >\max_{0\leq j<n-L}\{X_j\cdot
l\}, \,
(\varepsilon_{n-L},\ldots,\varepsilon_{n-1})=\bar{\varepsilon}^{(L)}\right\},\\
R_1&=D'\circ \theta_{S_1}+S_1,
\end{align*}
and for $n>1$
\begin{align*}
S_n&\\
=&\inf\left\{n>R_{n-1}:\, X_{n-L}\cdot l >\max_{0\leq j<n-L}\{X_j\cdot
l\}, \,
(\varepsilon_{n-L},\ldots,\varepsilon_{n-1})=\bar{\varepsilon}^{(L)}\right\},\\
R_n&=D'\circ \theta_{S_n}+S_n,
\end{align*}
where we define $S_n=\infty$ or $R_n=\infty$ whenever the respective previous random variable is $\infty$.
For given $L$ as above, these random variables are stopping times for the canonical underlying filtration of the pair $(X_n,
\varepsilon_n)_{n\geq 0}$. Notice also that the chain of inequalities
\begin{equation*}
  S_0=0<S_1\leq R_1\leq \ldots \leq S_n\leq R_n \ldots \leq \infty
\end{equation*}
is satisfied, with strict inequality if the left member is finite. Indeed, we shall see in brief that under assumption \textbf{(T)}$_{\ell}$ all of them are strict inequalities. Setting
\begin{equation*}
  K:=\inf\{n\geq 1: S_n<\infty, R_n=\infty\},
\end{equation*}
one defines the first time of
asymptotic regeneration $\tau_1:=S_K \leq \infty$ (we shall drop $L$ from the notation when there is not risk of confusion). A qualitative characterization
of the time $\tau_1=n$ is as follows: the first time $n$ that the walk
takes a strict record level in direction $l$ at time $n-L$, after which the
walk is pushing through direction $l$ by unit steps on the
lattice $\mathbb Z^d$ just owed to the action of
$\bar{\varepsilon}^{(L)}$ sequence in the probability space $(Q,
(\mathcal W)^{\mathbb N})$, independently on the environment, and finally for any future $j>n$ the walk remains forever inside the cone
$C(X_n, l, \zeta)$.

\vspace{0.5ex}
The next lemma shows that the previous construction is significant and its proof can be derived from Lemma \ref{Dundert} in conjunction with the argument given in \cite{Sz02}, page 517.
\begin{lemma}\label{tranunderttau}
Assume \textbf{(T)}$_\ell$. Then $P_0$-a.s. (see (\ref{rul}))
\begin{equation}\label{TTRANSIENT}
\lim_{\substack n\rightarrow \infty}\, X_n\cdot l=\infty.
\end{equation}
and there exists a deterministic $L_0>0$, so that
for each $L\geq L_0$, with $L\in \,|l|_1\mathbb N$, one has $\overline{P}_0$-a.s.
\begin{equation}\label{tau1finite}
\tau_1^{(L)}<\infty.
\end{equation}
\end{lemma}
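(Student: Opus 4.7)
The plan is to deduce (\ref{TTRANSIENT}) from Lemma \ref{Dundert} together with the exponential exit bound of Lemma \ref{T}(iii), and then to derive (\ref{tau1finite}) from (\ref{TTRANSIENT}) by combining the $Q$-randomization with a second application of Lemma \ref{Dundert}.

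For (\ref{TTRANSIENT}), I would first observe that on $\{D' = \infty\}$ the walk stays in $C(0, l, \zeta)$ for all times; uniform ellipticity forbids confinement to a bounded subset of the cone, so $|X_n|_2 \to \infty$, and the cone inclusion $X_n \cdot l \geq \zeta|l|_2|X_n|_2$ then forces $X_n \cdot l \to \infty$. Lemma \ref{Dundert} supplies $P_0[D' = \infty] \geq c_1 > 0$, so it remains to upgrade positive probability to almost sure. This I would do via the scheme of \cite{Sz02}, p.~517: using Lemma \ref{T}(iii) and the first Borel--Cantelli lemma, the walk almost surely reaches a sequence of levels $M_k \to \infty$ in direction $l$, since the ``wrong exit'' probabilities from boxes of scale $M_k$ decay exponentially in $M_k$. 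At each hitting time $T^l_{M_k}$, the strong Markov property, translation invariance of $\mathbb{P}$, and the mixing assumption \textbf{(SM)}$_{C,g}$ (or \textbf{(SMG)}$_{C,g}$) together yield a uniform lower bound on the conditional probability that $D' \circ \theta_{T^l_{M_k}} = \infty$. Since these trials succeed with uniformly positive probability, at least one succeeds almost surely, giving $X_n \cdot l \to \infty$ $P_0$-a.s.

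For (\ref{tau1finite}) I would work on the full-measure event from (\ref{TTRANSIENT}). Strict new records of $X_n \cdot l$ occur infinitely often at times $N_1 < N_2 < \cdots$; after thinning so that $N_{j+1} \geq N_j + L$ the $L$-windows of $\varepsilon$'s following the records are disjoint, and by the $Q$-independence each such window matches $\bar{\varepsilon}^{(L)}$ with conditional probability $\kappa^L > 0$ given the past. The conditional Borel--Cantelli lemma then gives $\overline{P}_0$-a.s.\ infinitely many matches, so $S_1 < \infty$, and the same argument iterates to show $S_n < \infty$ whenever $R_{n-1} < \infty$. At each finite $S_n$, Lemma \ref{Dundert} applied to the walk shifted to $X_{S_n}$---with the mixing assumption controlling the dependence of the environment inside the forward cone $C(X_{S_n}, l, \zeta)$ on the walker's past---provides $\overline{P}_0[R_n = \infty \mid \mathcal{F}_{S_n}] \geq c_1' > 0$ uniformly in $n$. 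Therefore $K = \inf\{n \geq 1: R_n = \infty\}$ is $\overline{P}_0$-a.s.\ finite and $\tau_1 = S_K < \infty$.

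The main obstacle is the mixing (rather than i.i.d.) nature of the environment: both upgrades of positive probability to almost sure rest on uniform lower bounds for conditional probabilities $P[\cdot \mid \mathcal{F}]$ at stopping times, which in our setting require \textbf{(SM)}$_{C,g}$ (or \textbf{(SMG)}$_{C,g}$) to decouple the forward-cone environment from the walker's history. This decoupling is also precisely why the regeneration structure of \cite{CZ01} is only \emph{approximate} rather than genuinely exact.
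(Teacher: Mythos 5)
Your overall strategy is sound and matches what the paper has in mind (box-exit Borel--Cantelli to reach high levels, then a trial scheme at fresh records exploiting Lemma \ref{Dundert} and the mixing assumption), but your Part 1 argument for (\ref{TTRANSIENT}) has two genuine gaps, both of which disappear in your Part 2 argument -- which suggests reversing the logical order.

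The first gap is the claimed ``uniform lower bound on the conditional probability that $D' \circ \theta_{T^l_{M_k}} = \infty$.'' The mixing condition \textbf{(SM)}$_{C,g}$ (and \textbf{(SMG)}$_{C,g}$) only applies when the set $\Delta$ whose environment you want to control satisfies $d_1(\Delta, V^c)\geq r$ with $A\subset V^c$; at a bare record time $T^l_{M_k}$ the cone $C(X_{T^l_{M_k}},l,\zeta)$ and the set of previously visited sites are $\ell_1$-adjacent (they share the apex region), so for $r>1$ you simply cannot take $\Delta$ to be the cone and $A$ the visited sites. In your Part 2 argument this is precisely what the $\bar\varepsilon^{(L)}$ window fixes: the $L$ forced steps create a buffer of order $L|l|_2/|l|_1$ between the environment revealed by the walk's past and the new cone apex, and the $L\geq L_0$ threshold in the lemma statement is exactly the requirement that this buffer beat the dependence range $r$ and make the mixing factor finite. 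Without that buffer you would at least need to invoke uniform ellipticity to force a bounded number of forward steps before applying the mixing bound, and you don't say this.

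The second gap is the step ``since these trials succeed with uniformly positive probability, at least one succeeds almost surely.'' The events $\{D'\circ\theta_{T^l_{M_k}}=\infty\}$ depend on the entire future after $T^l_{M_k}$ and are not measurable with respect to any nested filtration you can iterate over, so neither the conditional Borel--Cantelli lemma nor a geometric domination argument applies directly to this collection. The correct way to arrange the trials is exactly the $S_n/R_n$ structure you use in Part 2: since $\{R_{n-1}<\infty\}\in\mathcal F_{S_n}$ and $\overline P_0[R_n<\infty\mid\mathcal F_{S_n}]\leq 1-c_1'$, one gets $\overline P_0[K>n]\leq(1-c_1')^n$ by iterated conditioning, which is the adapted version of ``one trial eventually succeeds.'' This is why Sznitman's argument is organised around the $(S_n,R_n)$ sequence rather than around arbitrary record times.

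Once you notice this, the natural order of proof is the reverse of what you wrote. From Lemma \ref{T}(iii) and first Borel--Cantelli you get $\sup_n X_n\cdot l=\infty$ $P_0$-a.s., hence strict records occur infinitely often -- this needs no mixing. Your Part 2 argument then applies verbatim: records plus $Q$-independence and conditional Borel--Cantelli give $S_n<\infty$ on $\{R_{n-1}<\infty\}$, and the mixing with the $L$-buffer plus Lemma \ref{Dundert} gives $\overline P_0[R_n=\infty\mid\mathcal F_{S_n}]\geq c_1'>0$, so $K<\infty$ and $\tau_1=S_K<\infty$ a.s. Finally, on $\{\tau_1<\infty,\, D'\circ\theta_{\tau_1}=\infty\}$ the walk is confined to the cone $C(X_{\tau_1},l,\zeta)$ forever, so by uniform ellipticity $|X_n|_2\to\infty$ and the cone inclusion forces $X_n\cdot l\to\infty$, establishing (\ref{TTRANSIENT}) as a corollary of (\ref{tau1finite}) rather than a prerequisite. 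This is the reading that reconciles the reference to Lemma \ref{Dundert} with \cite{Sz02}, p.~517, and it is what makes the threshold $L_0$ appear in the statement.
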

Choosing $L$ and $\zeta$ as prescribed by Lemmas \ref{Dundert}-\ref{tranunderttau}, one has that $\overline{P}_0$-a.s. $\{R_k<\infty\}\,=\,\{S_{k+1}<\infty\}$ and $S_1<\infty$ by (\ref{TTRANSIENT}).

\noindent
Let us now define the iterates regeneration times of $\tau_1$ via:
\begin{equation*}
  \tau_n=\tau_1\circ \theta_{\tau_{n-1}}+\tau_{n-1}
\end{equation*}
for $n>1$. It is easy to verify that for any $k\in \mathbb N$, $\overline P_0-$a.s. $\tau_k<\infty$.

The main technical objective of this article will be to obtain
upper bounds for the $L$ dependent probabilities
\begin{equation*}
\overline{P}_0[\tau_1>u],
\end{equation*}
where $u$ is large and independent on a fixed $L$.

\subsubsection{General Proof Strategy}
From the fact that the proof of our main result Theorem \ref{mth3}
is a bit involved, we shall explain the general strategy to
follow. Roughly speaking, we will try to recover all of the
Sznitman's results of \cite{Sz00} to bound the probability of the
asymptotic regeneration time tails and then applying a version of the central limit theorem in \cite{CZ02} we
will obtain the proof. However, extending these results to the strong mixing case will prove to be technically more challenging.

\section{On the Almost Renewal Structure for Random Walks in Strong Mixing Environments}
\label{section2}
Our mixing assumptions provide an approximate
renewal structure when one considers the increments of the $\tau_1$
iterates. More precisely, we let $x\mathbb Z^d$ and $L\in|l|_1 \mathbb N$  and define the $\sigma$-algebra:
\begin{equation*}
\mathcal G_1:=\sigma\left(\omega(y, \cdot): y\cdot l< X_{\tau_1}\cdot l- \frac{L|l|_2}{|l|_1},\, \left(\varepsilon_i\right)_{0\leq i\leq \tau_1},\,\left(X_i\right)_{0 \leq i\leq \tau_1}\right),
\end{equation*}
along with the random environment $\sigma-$algebra
\begin{equation}\label{sigmafrak}
 \mathfrak{F}_{x,L}:=\sigma\left(\omega(y,\cdot): (y-x)\cdot l\leq -\frac{L|l|_2}{|l|_1} \right).
\end{equation}
An important technical fact comes in the next
\begin{proposition}[Under either: \textbf{(SM)}$_{C, g}$ or \textbf{(SMG)}$_{C, g}$ ]\label{propare}
For $L\in|l|_1 \mathbb N$ we let $\mu:=\mu(L)=\exp\left(e^{-gtL}\right)$. Then
for each $t\in (0,1)$ there exists $L_0=L_0(C, g, \kappa, l,
d,r)\in |l|_1 \mathbb N$ such that $\overline{P}_0-$a.s.
\begin{align}
\nonumber
\mu^{-1}(L)\,\overline{P}_0[(X_{n}-X_{0})_{n\geq0}&\in \cdot\,|\, D'=\infty] \leq
\overline{P}_0[(X_{\tau_1+n}-X_{\tau_1})_{n\geq0}\in \cdot \,|\,\mathcal G_1]\\
\label{approxre1}
\leq& \mu(L)\,\overline{P}_0[(X_{n}-X_{0})_{n\geq0}\in \cdot\,|\, D'=\infty]
\end{align}
holds, for all $L\geq L_0$, $L\in|l|_1 \mathbb N$.
\end{proposition}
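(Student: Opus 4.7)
My plan combines the quenched Markov property at $\tau_1$ with a past/forward decomposition of the environment separated by the buffer slab of $l$-depth $L|l|_2/|l|_1$, followed by an application of the mixing bound (\ref{sma}) or (\ref{smg}). The structure parallels the classical i.i.d. regeneration argument, with the exact independence of past and future environments replaced by a quantitative mixing estimate.

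\textbf{Markov decomposition.} Write $\tau_1 = S_K$ with $K = \inf\{n \geq 1 : S_n<\infty,\, R_n=\infty\}$ and decompose over $\{K=k\}$: here $S_k$ is a genuine stopping time for the canonical filtration of $(X, \varepsilon)$, $\tau_1 = S_k$, and $D'\circ\theta_{S_k}=\infty$, hence $X_{\tau_1 + n} \in C(X_{\tau_1}, l, \zeta) \subset \{y : y\cdot l \geq X_{\tau_1}\cdot l\}$ for every $n \geq 0$. Moreover, the $L$ deterministic steps preceding $\tau_1$ follow the non-zero sequence $\bar\varepsilon^{(L)}$ and therefore never query the environment at the bridge sites. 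Consequently, on $\{\tau_1<\infty\}$, the conditional law of $(X_{\tau_1+n} - X_{\tau_1})_{n \geq 0}$ given $\mathcal{G}_1$ and the full environment depends on $\omega$ only through its restriction $\omega^+$ to the forward half-space $\{y: y\cdot l \geq X_{\tau_1}\cdot l\}$, and by the Markov property at $S_k$ it equals the quenched walk law at $X_{\tau_1}$ conditioned on $\{D'=\infty\}$. Translation invariance of $\mathbb{P}$ identifies this functional of (shifted) $\omega^+$ with the one whose average under $\mathbb{P}$ yields $\overline{P}_0[\,\cdot\,\mid D'=\infty]$.

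\textbf{Mixing comparison.} The past environment $\omega^-$ on the slab $\{y\cdot l < X_{\tau_1}\cdot l - L|l|_2/|l|_1\}$ is $\mathcal{G}_1$-measurable, so it remains to compare $\mathbb{P}[\omega^+\in\cdot\mid\omega^-]$ with $\mathbb{P}[\omega^+\in\cdot]$. For a cylinder event depending on only the first $n$ future steps, only finitely many forward sites intervene, and I take $\Delta$ to be a finite neighborhood of those sites inside the forward half-space and $A$ the past slab. The $\ell_1$-separation between $\partial^r\Delta$ and $\partial^r A$ is of order $L$, so the double sum $\sum_{x\in\partial^r\Delta,\,y\in\partial^r A} e^{-g|x-y|_1}$ is dominated by a geometric series whose total decays exponentially in $L$, uniformly in finite $\Delta$, since $\sum_{z\in\mathbb{Z}^d} e^{-g|z|_1}<\infty$. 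Combining with (\ref{sma}) and its symmetric counterpart (obtained by swapping the two conditioning configurations) yields the two-sided Radon--Nikodym control $\exp(-e^{-gtL}) \leq d\mathbb{P}[\omega^+|_\Delta\in\cdot\mid\omega^-]/d\mathbb{P}[\omega^+|_\Delta\in\cdot] \leq \exp(e^{-gtL})$ for every $t\in(0,1)$ and all $L$ sufficiently large in terms of $C,g,\kappa,l,d,r$. The same estimate holds under (\ref{smg}): the bulk summand in place of the boundary summand is still absorbed by the exponential decay. Integrating this Radon--Nikodym bound against the quenched future law, using both representations from the Markov step, gives (\ref{approxre1}) on cylinders; a standard monotone class argument then extends the inequality to all measurable events on paths.

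\textbf{Main obstacle.} The principal difficulty is making the infinite-volume extension in $\Delta$ rigorous while correctly handling the fact that $\tau_1$ is not a stopping time: the decomposition over $\{K=k\}$ is what allows the quenched Markov property to be invoked on the genuine stopping event $\{S_k<\infty\}$, but verifying that the conditioning event $R_k = \infty$, equivalently $D'\circ\theta_{S_k}=\infty$, interacts cleanly with the mixing bound — rather than contributing extra error through correlation between the event $\{D'=\infty\}$ and the past environment — requires careful bookkeeping of the normalizing constants on both sides of the comparison.
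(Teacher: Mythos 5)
Your plan—decompose over $\{S_k<\infty,\,R_k=\infty\}$, invoke the strong Markov property at the stopping time $S_k$, use the product structure of $Q$ to deal with the $L$ deterministic bridge steps, and then compare the conditional environment distribution on the forward region to the unconditional one via the mixing bound—matches the paper's argument closely, including the reduction to a ratio estimate for the conditional kernel $\hat{\mathbb P}[\cdot\mid\mathfrak F_{x,L}]$ and the two-sided bound obtained by swapping boundary configurations in (\ref{sma}) or (\ref{smg}).

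There is, however, one genuine gap in the justification of the mixing step. You claim the double sum $\sum_{x\in\partial^r\Delta,\,y\in\partial^r A}e^{-g|x-y|_1}$ is ``dominated by a geometric series whose total decays exponentially in $L$, \emph{uniformly in finite $\Delta$}, since $\sum_{z\in\mathbb Z^d}e^{-g|z|_1}<\infty$.'' This reasoning is insufficient: summability of $e^{-g|z|_1}$ only bounds $\sum_y e^{-g|x-y|_1}$ for each fixed $x$, and the bound then scales like $|\partial^r\Delta|\cdot e^{-gL}$, which is not uniform as $\Delta$ grows. If $\Delta$ were merely constrained to lie in the forward half-space, one could take it to be a large layer at $\ell_1$-distance exactly $L$ from the slab and the sum would be unbounded. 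The uniformity actually requires the cone geometry you noted earlier: because the future trajectory on $\{D'=\infty\}$ (or on its truncation $\{D'>n\}$) is confined to $C(X_{\tau_1},l,\zeta)$, the $\ell_1$-distance from a point of $\Delta$ to the past slab grows linearly with its depth into the cone, while the number of lattice points at a given distance grows only polynomially, so the series converges with total $O(e^{-g\tilde t L})$ for suitable $\tilde t<1$. This is precisely the content of Step~2 in the paper's Lemma~\ref{lemmaac} (the slab–cone series estimate), and it must be made explicit rather than attributed to mere summability of the kernel. Separately, the normalization issue you flag in your ``Main obstacle'' paragraph is real but resolves routinely: one applies the two-sided mixing estimate to both the numerator event $\{(X_i-X_0)_i\in\cdot,\,D'=\infty\}$ and the denominator event $\{D'=\infty\}$, each producing a factor $\exp(\pm e^{-g\hat t L})$ with $\hat t\in(t,1)$, and then chooses $L_0$ large enough that $2e^{-g\hat t L}\le e^{-gtL}$ for $L\ge L_0$, which yields (\ref{approxre1}) with the stated $\mu(L)$.
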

\begin{proof}
We fix $t$ as in the statement of the proposition and consider
non-negative bounded functions $f$ and $h$ which are
$\sigma((X_n)_{n\geq0})$ and $\mathcal{G}_1$ measurable,
respectively. Denoting by $\vartheta$ and $\theta$ the space and
time shifts, from the very definition of the renewal structure one
has,
\begin{align*}
\overline{E}_0[f(X_{\tau_1+\cdot}-X_{\tau_1})h]=\sum_{k\geq 1}\overline{E}_0&[f(X_{S_k+\cdot}-X_{S_k})h,
S_k<\infty, R_k=\infty]\\
=\sum_{k\geq 1, j\geq 1, x\in \mathbb
Z^d}\overline{E}_0[f(X_{S_k+\cdot}-x)h,& X_{S_k}=x, S_k=j,
D'\circ\theta_n=\infty].
\end{align*}
Observe that over the event $\{X_{S_k}=x, S_k=j,
D'\circ\theta_j=\infty\}$ one can find a bounded function
$h_{x,k,j}$, which is $\sigma((\omega(y,\cdot), y\cdot l< x\cdot l
-(L|l|_2)/(|l|_1))\otimes (X_n)_{0\leq n\leq j})$-measurable and
equal to $h$. As a result, the rightmost term in the previous
display equals
\begin{equation*}
\sum_{k,j\geq 1, x\in \mathbb Z^d}\mathbb E[ E_{Q\otimes P_{\varepsilon, \omega}^0}[f(X_{S_k+\cdot}-x)h_{x,k,j}, X_{S_k}=x, S_k=j, D'\circ\theta_n=\infty]]
\end{equation*}
Applying now the strong Markov property at time $S_k$ and using the
product structure of $Q$ one sees in turn that equals
\begin{align}
\nonumber
\sum_{k,j\geq 1, x\in \mathbb Z^d}&\mathbb E \bigg[E_{Q\otimes P_{\varepsilon, \omega}^0}[h_{x,k,j}, X_{S_k}=x, S_k=j]\\
\label{re1}
\times &E_{Q\otimes P_{\vartheta_n \varepsilon, \theta_x\omega}^0}[f(X_{\cdot}-x), D'=\infty]\bigg].
\end{align}
Use notation (\ref{sigmafrak}) to obtain that (\ref{re1}) equals
\begin{align}
\nonumber
\sum_{k,j\geq 1, x\in \mathbb Z^d}&\mathbb E \Big[E_{Q\otimes P_{\varepsilon, \omega}^0}[h_{x,k,j}, X_{S_k}=x, S_k=j]\\
\label{reimpor}
\times & \mathbb E[E_{Q\otimes P_{\vartheta_n \varepsilon, \theta_x\omega}^0}[f(X_{\cdot}-X_0), D'=\infty]| \mathfrak{F}_{x,L}] \Big].
\end{align}
Fix $x\in \mathbb Z^d$, $n\in \mathbb N$ and consider the
conditional probability distribution
$$
\hat{\mathbb P}[\cdot\,|\mathfrak{F}_{x,L}]:=\frac{\mathbb E[ P_{Q\otimes P_{\vartheta_n \varepsilon, \theta_x\omega}^0}[(X_{i}-X_0)_{i\geq 0}\in \cdot, D'=\infty]|\mathfrak{F}_{x,L}]}{\mathbb E[ P_{Q\otimes P_{\vartheta_n \varepsilon, \theta_x\omega}^0}[D'=\infty]|\mathfrak{F}_{x,L}]}.
$$
It will be proven below that there exists a positive constant $L_0>0$, so that for each $L\in
|l|_1\mathbb N$, $L\geq L_0$ we have $\overline{P}_0$-a.s.
\begin{align}
\nonumber
&\exp\left(-e^{-g\, tL}\right)\overline{P}_0[(X_{i}-X_0)_{i\geq 0}\in \cdot\,|\, D'=\infty]\leq \hat{\mathbb P}[\cdot\,|\mathfrak{F}_{x,L}]\\
\label{acinequality}
&\leq \exp\left(e^{-g\, tL}\right)\overline{P}_0[(X_{i}-X_0)_{i\geq 0}\in \cdot\,|\, D'=\infty].
\end{align}
Thus using (\ref{acinequality}) and (\ref{tau1finite}), writing (\ref{reimpor}) as
\begin{align*}
A=\sum_{k,j\geq 1, x\in \mathbb Z^d} \mathbb E \Bigg[&E_{Q\otimes
P_{\varepsilon, \omega}^0}[h_{x,k,j}, X_{S_k}=x, S_k=j]\\
\times\mathbb E[ P_{Q\otimes P_{\vartheta_n \varepsilon, \theta_x\omega}^0}[D'=\infty]|\mathfrak{F}_{x,L}]
&\frac{\mathbb E [E_{Q\otimes P_{\vartheta_n \varepsilon, \theta_x\omega}^0}[f(X_{\cdot}-X_0), D'=\infty]| \mathfrak{F}_{x,L}]}{\mathbb E[ P_{Q\otimes P_{\vartheta_n \varepsilon, \theta_x\omega}^0}[D'=\infty]|\mathfrak{F}_{x,L}]}\Bigg]
\end{align*}
one has
\begin{equation*}
\exp\left(-e^{-g\, tL}\right)\overline{E}_0[h]\overline{E}_0[f| D'=\infty]\leq A\leq \exp\left(e^{-g\, tL}\right)\overline{E}_0[h]\overline{E}_0[f| D'=\infty]
\end{equation*}
which finishes the proof.
\end{proof}
Let us now prove the claim (\ref{acinequality}). Our proof shares some
similarities with the proofs of X. Guo in
Lemma 5 and Proposition 7 of \cite{Gu14}.
\begin{lemma}
\label{lemmaac}
Under the assumptions and notation of Proposition \ref{propare}.
Let $x_0\in \mathbb Z^d$ and $n\in \mathbb N$, then there exists
$L_0=L_0(C,g,\kappa, l, d , r)\in |l|_1 \mathbb N$ such that
\begin{align*}
&\exp\left(-e^{-g\, tL}\right)\overline{P}_0[(X_{i}-X_0)_{i\geq 0}\in \cdot\,|\, D'=\infty]\leq \hat{\mathbb P}[\cdot\,|\mathfrak{F}_{x_0,L}]\\
&\leq \exp\left(e^{-g\, tL}\right)\overline{P}_0[(X_{i}-X_0)_{i\geq 0}\in \cdot\,|\, D'=\infty],
\end{align*}
for all $L\geq L_0$, with $L\in |l|_1\,\mathbb N$.
\end{lemma}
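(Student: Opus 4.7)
The plan is to reduce the claim to a Radon-Nikodym estimate between environment laws on the cone $\mathcal{C} := C(0, l, \zeta)$, and then apply the strong mixing hypothesis together with a geometric separation argument. On the event $\{D' = \infty\}$ the shifted trajectory $(X_n - X_0)_{n\ge 0}$ is confined to $\mathcal{C}$, so the quenched integrand $P^{0}_{\omega, \varepsilon}[\,\cdot\,, D' = \infty]$ depends on $\omega$ only through $\omega|_{\mathcal{C}}$. Both $\hat{\mathbb{P}}[\,\cdot \mid \mathfrak{F}_{x_0, L}]$ and $\overline{P}_0[\,\cdot \mid D' = \infty]$ are therefore integrals of a common quenched functional against $\mathbb{P}[\omega|_{\mathcal{C}} \in \cdot \mid \mathfrak{F}_{x_0, L}]$ and $\mathbb{P}[\omega|_{\mathcal{C}} \in \cdot]$, respectively, so it is enough to prove the same two-sided estimate at the environment level.

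To that end, approximate $\mathcal{C}$ by an increasing sequence of finite sets $V_N \uparrow \mathcal{C}$, invoke the $r$-Markov property, and apply \eqref{sma} (under SM) or \eqref{smg} (under SMG): for any two configurations $\eta, \eta'$ agreeing off the set $A := \{y : (y - x_0)\cdot l \leq -L|l|_2/|l|_1\}$, the Radon-Nikodym derivative of the two conditional laws on $V_N$ is bounded by $\exp(C\,\Sigma)$, where $\Sigma$ sums $e^{-g|x - y|_1}$ over pairs in $\partial^{r} V_N \times \partial^{r} A$ (SM) or in $V_N \times A$ (SMG). The crucial geometric input is that for $x \in V_N \subset \mathcal{C}$ and $y \in A$ the inequality $(x - y)\cdot l \geq L|l|_2/|l|_1 + (x - x_0)\cdot l$ combined with $(x-y)\cdot l \leq |x-y|_1 \,|l|_{\infty}$ yields the $\ell_1$ separation
\begin{equation*}
|x - y|_1 \;\geq\; \gamma L + (x - x_0)\cdot l/|l|_{\infty}, \qquad \gamma := \frac{|l|_2}{|l|_1 \,|l|_{\infty}} > 0.
\end{equation*}
The resulting extra exponential decay in the in-cone displacement $(x-x_0)\cdot l$ ensures that, even in the SMG case, the (a priori volume) sum $\Sigma$ is bounded by $K\,L^{d-1} e^{-g \gamma L}$ uniformly in $N$, since the one-dimensional tail $\sum_{k\ge 0} k^{d-1} e^{-g k/|l|_{\infty}}$ converges.

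Integrating this conditional-to-conditional bound against $\mathbb{P}(d\eta')$ yields, uniformly in $N$, the two-sided estimate $\exp(\pm C K L^{d-1} e^{-g \gamma L})$ for the ratio $d\mathbb{P}[\omega|_{V_N} \in \cdot \mid \mathfrak{F}_{x_0, L}]/d\mathbb{P}[\omega|_{V_N} \in \cdot]$; letting $N \to \infty$ via a monotone class argument extends the bound to $\omega|_{\mathcal{C}}$, and the reduction in the first paragraph then transfers it to $\hat{\mathbb{P}}$ versus $\overline{P}_0[\,\cdot \mid D' = \infty]$. For any given $t\in (0,1)$ one chooses $L_0 = L_0(t, C, g, \kappa, l, d, r)$ large enough so that $C K L^{d-1} e^{-g \gamma L} \leq e^{-g t L}$ for all $L \geq L_0$ (the polynomial factor $L^{d-1}$ being absorbed into the exponential). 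The main technical obstacle is the uniform-in-$N$ control of the SMG volume sum: this is precisely where both the buffer term $\gamma L$ coming from the definition of $\mathfrak{F}_{x_0, L}$ and the in-cone decay along the $l$-direction are needed simultaneously, in parallel with the computations in Lemma 5 and Proposition 7 of \cite{Gu14}.
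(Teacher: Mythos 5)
Your proof is correct and follows essentially the same approach as the paper's: exploit the mixing hypothesis together with the geometric separation between the cone $C(x_0,l,\zeta)$ and the half-space $\{(y-x_0)\cdot l\leq -L|l|_2/|l|_1\}$ generating $\mathfrak{F}_{x_0,L}$, pass through a finite exhaustion $V_N\uparrow C(x_0,l,\zeta)$, and sum the exponential weights to obtain a factor of the form $\exp(\pm e^{-c L})$. The paper organizes this in three explicit steps (a path-level Radon--Nikodym estimate quoted from Guo, the series bounds (\ref{series1})--(\ref{series3}), and a limit over cylinder events $\{D'>n\}$), whereas you reduce directly to a two-sided Radon--Nikodym estimate between $\mathbb{P}[\omega|_{\mathcal{C}}\in\cdot\,|\,\mathfrak{F}_{x_0,L}]$ and $\mathbb{P}[\omega|_{\mathcal{C}}\in\cdot]$ and then transfer it to the ratio defining $\hat{\mathbb{P}}$; the two routes are equivalent, and your writing out of the in-cone decay $(x-x_0)\cdot l/|l|_{\infty}$ makes explicit why the SMG volume sum remains bounded uniformly in $N$. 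One point worth being aware of: your explicit separation constant $\gamma=|l|_2/(|l|_1|l|_\infty)$ equals $1$ only when $l$ is along a coordinate axis, so the final choice of $L_0$ making $CKL^{d-1}e^{-g\gamma L}\leq e^{-gtL}$ implicitly requires $t<\gamma$; the paper's Step~2 assertion that the minimal $\ell^1$-distance is $L-2r$ glosses over the same normalization constant, so this is not a defect specific to your argument, but it does mean the phrase ``for any given $t\in(0,1)$'' should really read ``for $t$ below the $l$-dependent separation constant.''
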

\begin{proof}
We split the proof into three steps.
\begin{description}
   \item[\textit{Step 1.}] The first step is the following claim:

\noindent
Let $A\subset\Lambda\subset\mathbb Z^d$. Suppose $S\neq\varnothing$
is a countable set of finite paths $x_.=(x_i)_{i=0}^N,\, N<\infty$
starting at $x_0$ that satisfy $d_1(x., \Lambda)\geq r$ and
\begin{equation}
\label{sum1}
  \sum_{y\in A, 0\leq i \leq N}e^{-g|y-x_i|_1 }\leq a,
\end{equation}
uniformly on $N$. Then $\mathbb P$-a.s. (cf. \cite{Gu14}, page 381 for a proof)

\begin{align*}
&\exp\left(-Ca\right)\\
&\leq \frac{\mathbb E[E_Q[P_{ \omega \circ \theta_{x_0}, \varepsilon \circ \vartheta_n}[\bigcup_{N\geq 0}\{(X_i-X_0)_{0\leq i\leq N}\in S\}]]|\omega_y,y\in\Lambda]}{\mathbb E[E_Q[P_{ \omega \circ \theta_{x_0}, \varepsilon \circ \vartheta_n}[\bigcup_{N\geq 0}\{(X_i-X_0)_{0\leq i\leq N}\in S\}]]|\omega_y,y\in\Lambda \backslash A]}\\
&\leq \exp\left(C a\right).
\end{align*}
   \item[\textit{Step 2.}]
   Consider the hyperplane $H_{L,l}$ defined by
   $$
   H_{L,l}:=\{z\in \mathbb Z^d: z\cdot l\leq -(L|l|_2)/|l|_1\}.
   $$
   In this step, we will first estimate the series
   \begin{align}
   \label{series1}
   &\sum_{\substack{y\in \partial ^r H_{L,l},\\ z\in \partial^r C(0,l,\zeta)}}\exp\left(-g |y-z|_1\right)\hspace{1.5ex}\mbox{ and }\\
   \label{series2}
   &\sum_{\substack{y\in  H_{L,l},\\ z\in  C(0,l,\zeta)}}\exp\left(-g |y-z|_1\right)
   \end{align}
   in terms of $g$, for some large but fixed $L$. Notice that for given $L>0$, both series in (\ref{series1}) converge because $\zeta>0$, as follows from the next argument. Choose $\hat{t}\in(t,1)$ and consider the first series in (\ref{series1}). We take $L$ large enough so that $L>(1-\hat{t})^{-1}2r$ (thus $L-2r>\hat{t}L$) and applying condition \textbf{(SM)}$_{C,g}$.
   $$
   \sum_{n\geq 0}\sum_{(y,z) \in \mathcal{HC}_{L,n, y, z} }e^{-g|y-z|_1} ,
   $$
   where we have written
  \begin{align*}
  &\mathcal{HC}_{L, n, y, z}:=\{(y,z) :y\in \partial ^r H_{L,l},\, z\in \partial ^r C(0,l,\zeta), \\
  &\hat{t}L + n \leq |y-z|_1 \leq \hat{t}L + (n+1)\}.
  \end{align*}
     Above was used the fact that the minimal $|\,\cdot\,|_1$-distance between any two points $y\in \partial ^r H_{L,l},\, z\in \partial ^r C(0,l,\zeta)$ is at least $L-2r$.

  \noindent
  Therefore we obtain the following upper bound for series (\ref{series1}):
  \begin{equation*}
  \sum_{n\geq0}|H_{L, n, y, z}|e^{-g (\hat{t}L+n)}.
  \end{equation*}
  On the other hand, the estimate
  $$
  |H_{L, n, y, z}| \leq \tilde{c}(d)r^2(n+1)^{2(d-1)}
  $$
  holds, for a suitable $\tilde{c}>0$ depending on $d$ and $\zeta$. Notice also that
  $$
  \sum_{n\geq 0}(n+1)^{2(d-1)}\, e^{-g n}
  $$
  converges, thus combining both last estimates we conclude: there exists $C_1=C_1(C, d, g, r, \zeta, l)>0$ scuh that if $L\geq C_1$ one can bound from above series (\ref{series1}) by
  $$
  \exp\left(-g \,\widetilde{t}L\right),
  $$
  where $\widetilde{t}\in (t, \hat{t})$.

  \noindent
  Performing the same type of argument, one sees that from the fact that the inner angle of the cone is positive there exists $C_2>0$, so that:
  \begin{equation}
   \label{series3}
   \sum_{y\in  H_{L,l}, z\in  C(0,l,\zeta)}\exp\left(-g |y-z|_1\right)\leq \exp\left(-g\, tL\right)
   \end{equation}
  holds, for all $L\in \mathbb N |l|_1$, $L\geq L_0$, provided that $L_0\geq C_2$.

  Consequently, for a given finite path starting from $x_0$ of the form
  $$
  x_.=(X_i)_{i=0}^N, \, N<\infty, \,x_.\subset C(x_0, l, \zeta)
  $$
  one has that uniformly on $N$, there exists a positive constant $C_3$ such that if $L\geq C_3$
  $$
  \sum_{y\in \partial ^r H_{L,l,x_0}, z\in \partial^r G_x}\exp\left(-g|y-(z-x_0)|_1\right)\leq e^{-g\, \widetilde{t}L},
  $$
  provided that we define
  $$
  H_{L,l,x_0}:=\{z\in \mathbb Z^d: (z-x_0)\cdot l\leq -(L|l|_2/(|l|_1))\}
  $$
  and
  $$
  G_x:=\{y\in \mathbb Z^d: y =X_i, \mbox{ for some }i\in[0, N]\}.
  $$
  Likewise using the second estimate in (\ref{series3}), we obtain a suitable constant $C_4$ such that $L_0\geq C_4$ implies that
  $$
  \sum_{y\in H_{L,l,x_0}, 0\leq i\leq N}\exp\left(-g|y-(X_i-x_0)|_1\right)\leq e^{-g\, \widetilde{t}L}
  $$
  holds, for $L\geq L_0$, uniformly on $N\in \mathbb N$, where the notation is as above.

  \noindent
  We then consider, instead of a fixed path $x_\cdot$, a countable collection $S$ of finite paths starting from a common point $x_0\in\mathbb Z^d$ with all of them contained in a cone $C(x_0,l, \zeta)$. Therefore, choosing $\widehat{t}\in(t,\widetilde{t})$ we find that there exists $C_5$ so that whenever $L\geq C_5$, \textbf{\textit{Step 1}} gives
  \begin{align*}
  &\exp\left(-e^{-g\, \widehat{t}L}\right) \\
  &\leq\frac{\mathbb E[E_Q[P_{ \omega \circ \theta_{x_0}, \varepsilon \circ \vartheta_n}[\bigcup_{N\geq 0}\{(X_i-X_0)_{0\leq i\leq N}\in S\}]]|\omega_y,y\in\Lambda]}{\mathbb E[E_Q[P_{ \omega \circ \theta_{x_0}, \varepsilon \circ \vartheta_n}[\bigcup_{N\geq0}\{(X_i-X_0)_{0\leq i\leq N}\in S\}]]|\omega_y,y\in\Lambda \backslash A]} \\
  &\leq \exp\left(e^{-g\, \widehat{t}L}\right),
  \end{align*}
  where $\Lambda=H_{L,l,x_0}$, and $A$ is an arbitrary subset of $\Lambda$.
   \item[\textit{Step 3.}] We prove here the assertion of the lemma. For $j\in \mathbb N$, we set $S_{0,j}$ the set of paths of length $j-1$ starting from $0$. Then by definition one has
   \begin{align*}
   &\{(X_{i}-X_0)_{i\geq 0}\in \cdot\,\,, D'=\infty\}\\
   &=\bigcap_{n\geq 0}\bigcup_{N\geq 0}\bigcup_{j=0}^N\{(X_i-X_0)_{i=0}^{j}\in S_{0,j},D'>n \}.
   \end{align*}
   For any $n\in \mathbb N$, an application of \textit{Step 1} and \textit{Step 2} lead us to
   \begin{align*}
   &\exp\left(-e^{-g\, \widehat{t}L}\right)\\
   &\leq\frac{\mathbb E[E_Q[P_{ \omega \circ \theta_{x_0}, \varepsilon \circ \vartheta_n}[(X_i-X_0)_{i\geq 0}\in \cdot, D'>n]]|\omega_y,y\in\Lambda]}{\mathbb E[E_Q[P_{ \omega \circ \theta_{x_0}, \varepsilon \circ \vartheta_n}[(X_i-X_0)_{i\geq 0}\in \cdot, D'>n]]|\omega_y,y\in\Lambda \backslash A]} \\
   &\leq \exp\left(e^{-g\, \widehat{t}L}\right),
  \end{align*}
  where $\Lambda$ and $A$ are as in \textbf{\textit{Step 2}} (recall that $A$ is an arbitrary subset of $\Lambda$). Letting $n\rightarrow\infty$ and then using the result for $A=\Lambda$, one gets
  \begin{align*}
  &\exp\left(-e^{-g\, \widehat{t}L}\right)\\
   &\leq \frac{\mathbb E[E_Q[P_{ \omega \circ \theta_{x_0}, \varepsilon \circ \vartheta_n}[(X_i-X_0)_{i\geq 0}\in \cdot, D'=\infty]]|\omega_y,y\in\Lambda]}{P_0[(X_i-X_0)_{i\geq 0}\in \cdot, D'=\infty]} \\
  &\leq \exp\left(e^{-g\, \widehat{t}L}\right),
  \end{align*}
  and
  \begin{align*}
  &\exp\left(-e^{-g\, \widehat{t} L}\right)\\
  &\leq\frac{\mathbb E[E_Q[P_{ \omega \circ \theta_{x_0}, \varepsilon \circ \vartheta_n}[ D'=\infty]]|\omega_y,y\in\Lambda]}{P_0[ D'=\infty]}
  \leq \exp\left(e^{-g\, \widehat{t}L}\right).
  \end{align*}
  By choosing $L_0$ large enough such that for $L\geq L_0$
  $$
  2e^{-g\, \widehat{t}L}\leq e^{-g\,tL},
  $$
  we finish the proof.
 \end{description}
\end{proof}
We close this section with a straightforward
consequence of the previous proposition which will be stated in the
next corollary, for reference purposes. As a natural extension to
$\mathcal G_1$, we define the sigma-algebra $\mathcal G_i$, where
$i\in \mathbb N$, by
\begin{equation*}
  \mathcal G_i=\sigma\big(\omega(y, \cdot): y\cdot l< X_{\tau_i}\cdot l- (L|l|_2)/(|l|_1),\, (\varepsilon_i)_{0\leq j\leq \tau_i},\,(X_j)_{0 \leq j\leq \tau_i}\big).
\end{equation*}
Let $\mu$ be as in the statement of Proposition \ref{propare}, then an induction argument makes us conclude:
\begin{corollary}
\label{corren}
Assume either: \textbf{(SM)}$_{C, g}$ or \textbf{(SMG)}$_{C, g}$ and let
$j\in\mathbb N, \, t\in (0,1)$. Then there exists $L_0=L_0(C,g,
\kappa, l, d,r)\in |l|_1 \mathbb N$ such that $\mathbb{P}$-a.s.
\begin{align*}
&\mu^{-1}(L)\,\overline{P}_0[(X_{n}-X_{0})_{n\geq0}\in \cdot\,|\, D'=\infty] \leq \overline{P}_0[(X_{\tau_j+n}-X_{\tau_j})_{n\geq0}\in \cdot \,|\,\mathcal G_j]\\
&\leq \mu(L) \overline{P}_0[(X_{n}-X_{0})_{n\geq0}\in \cdot\,|\, D'=\infty]
\end{align*}
holds, for all $L\geq L_0$ with $L\in |l|_1\mathbb N$.
\end{corollary}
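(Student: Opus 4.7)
Plan. I would prove the corollary by induction on $j$, with the base case $j=1$ being Proposition \ref{propare}. The induction is essentially cosmetic: the proof of Proposition \ref{propare} applies verbatim with $\tau_{j+1}$ in place of $\tau_1$, the only change being in the indicator event of the decomposition, which now records that $S_k$ is the $(j{+}1)$-st asymptotic regeneration-level time rather than the first. Crucially, Lemma \ref{lemmaac} is invoked only once—at the final renewal level $X_{\tau_{j+1}}$—so the bound obtained is again $\mu(L)^{\pm 1}$, explaining why $L_0$ can be chosen independent of $j$.

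Concretely, for bounded nonnegative $f$ on path space and bounded nonnegative $\mathcal{G}_{j+1}$-measurable $h$, the first display in the proof of Proposition \ref{propare} generalises to
\begin{equation*}
\overline{E}_0\bigl[f(X_{\tau_{j+1}+\cdot}-X_{\tau_{j+1}})\,h\bigr]
=\sum_{k\geq 1,\,n\geq 1,\,x\in\mathbb{Z}^d}\overline{E}_0\bigl[f(X_{S_k+\cdot}-x)\,h,\,A^{(j+1)}_{k,n,x}\bigr],
\end{equation*}
where $A^{(j+1)}_{k,n,x}$ is the event that $X_{S_k}=x$, $S_k=n$, $D'\circ\theta_n=\infty$, and $S_k$ is the $(j{+}1)$-st regeneration-level time. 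On $A^{(j+1)}_{k,n,x}$, $h$ coincides with a function $h_{x,n,k}$ measurable with respect to $\sigma(\omega(y,\cdot):\,y\cdot l<x\cdot l-L|l|_2/|l|_1)\otimes\sigma((X_m)_{0\leq m\leq n})$; the strong Markov property at $S_k$, the product structure of $Q$, and a single invocation of Lemma \ref{lemmaac} with $x_0=x$ then deliver the claimed two-sided bound by $\mu(L)^{\pm 1}\,\overline{P}_0[\,\cdot\mid D'=\infty]$.

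Main obstacle. The one technical point is verifying the measurability rearrangement of $h$ as $h_{x,n,k}$ in the iterated setting: although $A^{(j+1)}_{k,n,x}$ has a richer combinatorial structure than its $j=1$ analogue (decomposing into a disjoint union over histories of the intermediate regenerations $0<\tau_1<\ldots<\tau_j<\tau_{j+1}$), each intermediate time $\tau_i$ is a functional of the pair (walk, $\varepsilon$-sequence) up to time $n$ alone, so no environmental information beyond the slab $\{y:\,y\cdot l<x\cdot l-L|l|_2/|l|_1\}$ enters; here one uses that by construction $X_{\tau_i}\cdot l\leq X_{\tau_{j+1}}\cdot l-L|l|_2/|l|_1$ for every $i\leq j$. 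Once this is observed, the geometric-series bounds of Step~2 of Lemma \ref{lemmaac} apply with $x_0=x$ verbatim, and no change to $L_0$ is required.
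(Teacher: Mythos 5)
Your argument is correct and is essentially the content of the paper's one-line ``induction argument'': the key point, which you correctly isolate, is that Lemma~\ref{lemmaac} is invoked exactly once, at $x_0=X_{\tau_{j+1}}$, so the two-sided bound is $\mu(L)^{\pm 1}$ uniformly in $j$ (which is exactly what is needed later, e.g.\ for Lemma~\ref{Guo1}). Your measurability justification could be tightened slightly: beyond the level ordering you cite, one also needs the cone nesting $C(X_{\tau_{i+1}},l,\zeta)\subset C(X_{\tau_i},l,\zeta)$ (a consequence of $X_{\tau_{i+1}}\in C(X_{\tau_i},l,\zeta)$ and convexity of the cone) to see that, once $\{X_n=x,\ D'\circ\theta_n=\infty\}$ is fixed, the intermediate constraints $D'\circ\theta_{\tau_i}=\infty$ are determined by the trajectory and $\varepsilon$-sequence up to time $n$ alone, since the walk's future lies in $C(x,l,\zeta)\subset C(X_{\tau_i},l,\zeta)$ for each $i\leq j$.
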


\section{Preliminary Estimates: The Regeneration Position has some Exponential Moments}
\label{sectionexp}

It is the purpose of this section to prove that the random variable
$X_{\tau_1}\cdot l$ has some finite exponential moments under condition
\textbf{(T)}$_\ell$ (recall (\ref{rul})). We will derive after that proof two further consequences. On the one hand it will be showed the finiteness of some exponential moments for the random variable $\sup_{\substack{0\leq n\leq \tau_1}}|X_n|_2$; and on the other hand, an upper bound of stretched
exponential-type for the probability of large orthogonal
oscillations along the approximate asymptotic direction of the walk. Throughout the rest of the paper we assume condition \textbf{(T)}$_\ell$, and we pick $h\in (0,\infty)$ so that (\ref{rul}) is satisfied. Then we choose a constant $\mathfrak{r}>0$ as in the item $iii)$ of Lemma \ref{T} and the cone angle
$\zeta$ will be any fixed positive number satisfying the following
requirement
\begin{equation}
\label{zeta}
\zeta<\min\left\{\frac{1}{9d},\, \frac{1}{3d\mathfrak{r}},\,\cos\left(\frac{\pi}{2}-\arctan(3\mathfrak{r})\right)\right\}.
\end{equation}

\begin{proposition}
\label{expmpr}
Assume that \textbf{(T)}$_\ell$ and either \textbf{(SM)}$_{C,g}$ or \textbf{(SMG)}$_{C,g}$ hold. Then
there exist positive constants $c_2$, $c_3$ and $L_0$, such that
for all $L\geq L_0$, with $L\in |l|_1 \mathbb N$:

\begin{equation}
\label{expmompos}
\overline{E}_0[\exp\left(c_2\kappa^L X_{\tau_1}\cdot l\right)]<c_3
\end{equation}
holds.

\end{proposition}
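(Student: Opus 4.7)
The plan is to control $X_{\tau_1}\cdot l$ through the index $J^\star$ of the earliest strict record $\sigma_{J^\star}$ of $(X_n\cdot l)_{n\ge 0}$ at which both success conditions hold, namely $(\varepsilon_{\sigma_{J^\star}},\dots,\varepsilon_{\sigma_{J^\star}+L-1})=\bar{\varepsilon}^{(L)}$ and $D'\circ\theta_{\sigma_{J^\star}+L}=\infty$. A cone-inclusion argument (using that $C(X',l,\zeta)\subset C(X,l,\zeta)$ whenever $X'\in C(X,l,\zeta)$, which follows from the triangle inequality) shows that for this smallest such $j$ one in fact has $\tau_1=\sigma_{J^\star}+L$: indeed, if $J_k<J^\star$ for some $k\le K-1$, then on the event $R_k<\infty$ one necessarily has $\sigma_{J^\star}+L>R_k$, since otherwise $D'\circ\theta_{\sigma_{J^\star}+L}=\infty$ would force the walker to remain in the smaller cone $C(X_{\sigma_{J^\star}+L},l,\zeta)\subset C(X_{S_k},l,\zeta)$, contradicting $R_k<\infty$. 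Consequently
\[
X_{\tau_1}\cdot l \;\le\; J^\star|l|_\infty + \frac{L|l|_2^2}{|l|_1},
\]
each of the $J^\star$ record jumps being bounded by $|l|_\infty$ and the final $\bar{\varepsilon}^{(L)}$-block contributing exactly $L|l|_2^2/|l|_1$. Since $\kappa^L L\to 0$, the additive term contributes only a bounded multiplicative factor to $\overline{E}_0[\exp(c_2\kappa^L\,\cdot\,)]$, so it is enough to bound $\overline{E}_0[\exp(c_2\kappa^L|l|_\infty J^\star)]$ for some $c_2>0$.

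The core estimate is the following uniform conditional lower bound: at every strict record $\sigma_j$, conditionally on the $\sigma$-algebra $\mathcal{H}_j$ generated by $(X_i,\varepsilon_i)_{i\le\sigma_j}$ and the environment at sites $y$ with $y\cdot l<X_{\sigma_j}\cdot l$, the probability that both success conditions are met at $\sigma_j$ is at least $(c_1/2)\kappa^L$ for $L$ large, where $c_1$ is the constant of Lemma~\ref{Dundert}. The block factor contributes exactly $\kappa^L$ (by the i.i.d.\ structure of $Q$, independently of $\mathcal{H}_j$); on the block event, $X_{\sigma_j+L}$ is deterministic, and the remaining factor is $\overline{P}_0[D'\circ\theta_{\sigma_j+L}=\infty\mid\mathcal{H}_j,\text{block}]$. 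Its unconditional value is at least $c_1$ by translation invariance of $\mathbb{P}$ and Lemma~\ref{Dundert}, and a decoupling of exactly the form of Step~2 and Step~3 of the proof of Lemma~\ref{lemmaac}---using (SM) or (SMG) across the slab of width $L|l|_2^2/|l|_1$ created by the $\bar{\varepsilon}^{(L)}$-block between $\mathcal{H}_j$ and the forward cone $C(X_{\sigma_j+L},l,\zeta)$---delivers a multiplicative correction of $\exp(e^{-gtL})$, which is at most $2$ for $L\ge L_0$. Iterating this lower bound across records via the tower property gives $\overline{P}_0[J^\star>n]\le (1-(c_1/2)\kappa^L)^n$, whence $\overline{E}_0[\exp(c_2\kappa^L|l|_\infty J^\star)]<\infty$ for any $c_2<c_1/(2|l|_\infty)$; combined with the first paragraph this yields (\ref{expmompos}).

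The principal obstacle is making this conditional decoupling rigorous at a non-regeneration time: Proposition~\ref{propare} gives the analogous statement at the genuine regeneration $\tau_1$, where the cone excursion has already been realized, but here we must decouple at the earlier time $\sigma_j+L$ without yet knowing whether this attempt will succeed. The saving point is that only the $\bar{\varepsilon}^{(L)}$-block (and not the cone condition $D'=\infty$) is needed to open up the slab of width $L|l|_2^2/|l|_1$ in direction $l$ separating the past environment encoded in $\mathcal{H}_j$ from the forward cone $C(X_{\sigma_j+L},l,\zeta)$; the sum-over-pairs estimates of Step~2 of Lemma~\ref{lemmaac}, bounding $\sum e^{-g|x-y|_1}$ by $e^{-g\widetilde{t}L}$ for some $\widetilde{t}\in(t,1)$, then apply essentially verbatim.
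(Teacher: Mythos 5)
Your strategy is a genuinely different and more direct route than the paper's. The paper decomposes $\overline E_0[e^{c\kappa^L X_{\tau_1}\cdot l}]$ over the attempt index $k$ of the sequence $(S_k)$ and establishes a recursion whose geometric decay in $k$ rests on the exponential moment of the overshoot $\overline M=\sup_{0\le n\le D'}(X_n-X_0)\cdot l$ from Lemma~\ref{expintM}; you instead iterate over \emph{all} strict record times and aim for a direct geometric tail on the record index $J^\star$. The decoupling idea at a raw record time is sound---only the $\bar\varepsilon^{(L)}$-block, not the cone event, is needed to open the slab on which Steps~2--3 of Lemma~\ref{lemmaac} act---so a lower bound of order $\kappa^L$ on the conditional success probability at $\sigma_j$ given $\mathcal H_j$ can indeed be extracted. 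But the central claimed estimate $\overline P_0[J^\star>n]\le(1-(c_1/2)\kappa^L)^n$ does not follow from ``iterating via the tower property,'' and this is where the argument has a genuine gap.

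The difficulty is that $\{J^\star>n-1\}$ is \emph{not} $\mathcal H_n$-measurable. If a block started at some $\sigma_{j_0}<\sigma_n$ and the walker is still inside $C(X_{\sigma_{j_0}+L},l,\zeta)$ at time $\sigma_n$, then whether that attempt has failed, i.e.\ whether $D'\circ\theta_{\sigma_{j_0}+L}<\infty$, is a property of the future of $\mathcal H_n$. Conditioning on $\{J^\star>n-1\}$ therefore tilts the law of $(\varepsilon_i)_{i>\sigma_n}$ and of the environment exactly in the region where $D'\circ\theta_{\sigma_n+L}$ lives, and a lower bound established for $\overline P_0[\,\cdot\,|\,\mathcal H_n]$ cannot simply be carried over to $\overline P_0[\,\cdot\,|\,\mathcal H_n,\,J^\star>n-1]$. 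Repairing this forces one to skip to the first record at which every earlier attempt is resolved---which is exactly the definition of $S_{k+1}$---and one must then control how many records a failed excursion consumes; that quantity is governed by $\overline M$, and bounding its exponential moment on $\{D'<\infty\}$ is Lemma~\ref{expintM}. Your sketch has no substitute for that lemma, and this, rather than the decoupling at a non-regeneration time that you flag as the principal obstacle, is the real missing ingredient.

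Two smaller points. With $\mathcal H_j$ as you define it, $\varepsilon_{\sigma_j}$ is already $\mathcal H_j$-measurable (it drives the step into $X_{\sigma_j}$), so the block event $(\varepsilon_{\sigma_j},\dots,\varepsilon_{\sigma_j+L-1})=\bar\varepsilon^{(L)}$ is not $Q$-independent of $\mathcal H_j$ and does not produce a clean $\kappa^L$ factor as written. And the identity $\tau_1=\sigma_{J^\star}+L$ has an unhandled boundary case $\sigma_{J^\star}+L=R_k$ with $k<K$: there $X_{\sigma_{J^\star}+L}=X_{R_k}\notin C(X_{S_k},l,\zeta)$, so the cone-nesting contradiction does not arise, and since $S_{k+1}>R_k$ one gets $\tau_1>\sigma_{J^\star}+L$.
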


\begin{proof}
By virtue of the renewal structure definitions, for $c>0$ and $L\in
|l|_1 \mathbb{N}$, one has that:
\begin{align*}
\overline{E}_0\left[\exp\left(c\kappa^L X_{\tau_1}\cdot l\right)\right]&\\
=\sum_{k \geq 1}\overline{E}_0\big[\exp\big( c\kappa^L X_{S_k}\cdot l\big), S_k<\infty,& D'\circ \theta_{ S_k}=\infty\big]\\
=\sum_{x\in \mathbb Z^d, n\in\mathbb{N}, k\in \mathbb{N}}\mathbb{E}
\big[ E_{Q\times P_{\varepsilon,\omega}^0}\big[e^{c \kappa^L
x\cdot l},X_{S_k}=x,
S_k=&n\big]P_{\theta_n\varepsilon,\theta_x\omega}^0[D'=\infty]\big].
\end{align*}
Notice that for $k\geq1$, the Markov property implies that
\begin{align}
\label{expexp}
&\overline{E}_0\left[\exp \left(c\kappa^L X_{S_k}\cdot l\right), S_k<\infty, D'\circ \theta_{S_k}=\infty\right]\\
\nonumber
&=\sum_{x\in \mathbb Z^d, n\in\mathbb{N}}\mathbb{E} \big[ E_{Q}[ E_{P_{\varepsilon,\omega}^0}[\exp\left(c \kappa^L x\cdot l\right),X_{S_k}=x, S_k=n]P_{\theta_n\varepsilon,\theta_x\omega}^0[D'=\infty]]\big].
\end{align}
Observe now that the random variables
\begin{equation*}
E_{ P_{\varepsilon,\omega}^0}[\exp(c \kappa^L x\cdot
l),X_{S_k}=x, S_k=n]
\end{equation*}
and $P_{\theta_n\varepsilon,\theta_x\omega}^0[D'=\infty]$ are:
$
\sigma\left(\varepsilon_i, i<n\right)\otimes\sigma\left(\omega(y,\cdot), (y-x)\cdot l\leq L | l|_2/| l|_1\right)
$
and
$
\sigma\left(\varepsilon_i, i\geq n\right)\otimes\sigma\left(\omega(y,\cdot), y\in C(x, l, \zeta)\right)
$
measurable, respectively.

\vspace{0.2ex}
\noindent
Therefore for $x\in \mathbb Z^d$, using
the previously introduced notation $\mathfrak{F}_{x,L}$ (cf.
(\ref{sigmafrak})), the mixing condition
\textbf{(SM)}$_{C,g}$ and the
construction of the probability measure $\overline{P}_0$ we find
an $L_0>0$ such that for all $L\geq L_0$, with $L\in
|l|_1\mathbb N$, the rightmost term of (\ref{expexp})
is less than
\begin{align}
\label{splitinequality}
\mathbb{E}\big[ E_{Q\otimes P_{\theta_n\varepsilon, \theta_x\omega}^0}[\mathds{1}_{D'=\infty}]|\mathfrak{F}_{x,L}] \big]&
\leq\overline{E}_0\left[\exp\left( c\kappa^L X_{S_k}\cdot l\right), S_k<\infty\right] \\
\nonumber
\times\exp\bigg(C\sum_{x\in\partial^r(H^c), y \in \partial^r(\Lambda^c)}&e^{-g| x-y|_2}\bigg) P_0[D'=\infty],
\end{align}
where $H$ and $\Lambda$ denote the sets $\{z\in \mathbb{Z}^d:z
\cdot l\leq -L | l|_2/|l|_1 \}$ and $C(0,l,\zeta)$ respectively.
Since $\zeta>0$, the proof of Proposition \ref{propare}
provides the existence of a constant $\widehat{c}>0$ so that
$$
\exp\bigg(C\sum_{x\in\partial
^r(H^c), y \in \partial^r(\Lambda^c)}e^{-g\mid
x-y\mid_2}\bigg)\leq \exp\left(e^{-\widehat{c} L}\right),
$$
with a similar upper bound under \textbf{(SMG)}$_{C,g}$. Going
back to (\ref{expexp}), we have
\begin{align*}
&\overline{E}_0\left[\exp \left(c\kappa^L X_{S_k}\cdot l\right), S_k<\infty, D'\circ \theta_{S_k}=\infty\right]     \\
   & \leq 2\overline{E}_0\left[\exp \left(c\kappa^L X_{S_k}\cdot l\right), S_k<\infty\right]P_0[D'=\infty]].
\end{align*}
We now proceed with the same type of argument of \cite{GR17},
Subsection 6.2; so as to obtain a recursion for $k\geq0$ of the
expression
\begin{equation}
\label{Sk}
\overline{E}_0\left[\exp\left( c\kappa^L X_{S_{k+1}}\cdot l\right), S_{k+1}<\infty\right].
\end{equation}
To this end, it will be convenient to introduce the random variable
$$
M_k:=\sup_{0\leq n\leq R_k} X_n \cdot l,
$$
for $k\geq0$ (with the
convention $M_0=0$). We also introduce the sets parametrized by $k, n \in \mathbb N$:
\begin{equation*}
A_{n,k}=\left\{\varepsilon\in W^{\mathbb N}
:\left(\varepsilon_{t^{(n)}_{k}},\varepsilon_{t^{(n)}_{k}+1},\ldots,
\varepsilon_{t^{(n)}_{k}+L-1}\right)=\overline{\varepsilon}^{(L)}\right\}
\end{equation*}
and:
\begin{equation*}
B_{n,k}=\left\{\varepsilon\in W^{\mathbb N} :
\left(\varepsilon_{t^{(j)}_{k}},\varepsilon_{t^{(j)}_{k}+1},\ldots,\varepsilon_{t^{(j)}_{k
}+L-1}\right)\ne \overline{\varepsilon}^{(L)} \forall
j\in[0,n-1]\right\}.
\end{equation*}
As was mentioned in \cite{GR17}, pp. 25-26; denoting by
$\overline{T}^l_a$ where $a\in \mathbb{R}$ the first time that the
walk goes on strictly over level $a$ in direction $l$, i.e.
$$
\overline{T}^l_a=\inf\{n\geq 0:X_n \cdot l>a\},
$$
and by $(t_k^{(n)})_{n \geq 0}$ the time sequence of successive
maxima in direction $l$, defined recursively via:
\begin{equation*}
t_k^{(0)}=\overline{T}^l_{M_k},\,\,\mbox{ and for }\, n\geq
1:\,\,t_k^{(n)}=\overline{T}^l_{X_{t_k^{(n-1)}}\cdot l},
\end{equation*}
one has the inclusion:
$$
\{S_{k+1}<\infty\}\subseteq\bigcup_{n\geq0}\{t_k^{(n)}<\infty, B_{n,k}, A_{n,k} \}.
$$
Furthermore, $\overline{P}_0$-a.s. on the event $B_{n,k}\cap
A_{n,k}$ the identity $$ S_{k+1}=t_k^{(n)}+L $$ holds. As a result,
we have for $k\geq 0$ the inequality:
\begin{align}
\nonumber
\overline{E}_0\big[\exp \left(c\kappa^L X_{S_{k+1}}\cdot l\right), &S_{k+1}<\infty\big]\\
\nonumber
\leq\sum_{0\leq n\leq L^2-1}\overline{E}_0\big[\exp\big( c\kappa^L X_{S_{k+1}}\cdot& l\big),t_k^n<\infty, B_{n,k},  A_{n,k}\big] \\
\nonumber
+\sum_{n\geq L^2}\overline{E}_0\big[\exp\big( c\kappa^L X_{S_{k+1}}\cdot l&\big),t_k^n<\infty, B_{n,k}, A_{n,k}\big] \\
\label{decomexp1}
\leq 2\sum_{n\geq L^2}\overline{E}_0\big[\exp\big( c\kappa^L
X_{S_{k+1}}\cdot& l\big),t_k^n<\infty, B_{n,k}, A_{n,k}\big],
\end{align}
where the last inequality in (\ref{decomexp1}) can be verified by inspecting  the
orders of $L$ in both sums. Moreover, one can find a positive
constant $\overline{c}$ such that $\overline{P}_0$-a.s. on the
event $\{t_k^{(n)}<\infty, B_{n,k}, A_{n,k}\}$
\begin{equation}
\label{inxsk}
X_{S_{k+1}}\cdot l\leq M_k + n | l|_{\infty}+\overline{c} L,
\end{equation}
holds. Using the product structure of the measure $Q$ and
inequality (\ref{inxsk}), it follows that for $n\geq L^2$
\begin{align*}
&\overline{E}_0\left[\exp\left( c\kappa^L X_{S_{k+1}}\cdot l\right),t_k^{(n)}<\infty, B_{n,k}, A_{n,k}\right]\\
&\leq\kappa^L \overline{E}_0\left[ \exp\left( c\kappa^L(M_k + n | l|_{\infty}+\overline{c} L)\right), t_k^{(n)}<\infty, B_{n,k}\right].
\end{align*}
We now apply the Markov property at times $t_k^{(0)}$ and
$t_k^{(n)}$ (recall that $n \geq L^2$), together with Lemma 6.6 of
\cite{GR17} to see that for some positive constant $\widetilde{c}$,
the inequality:
\begin{align*}
&\kappa^L \overline{E}_0\left[ \exp\left( c\kappa^L(M_k + n \mid l\mid_{\infty}+\overline{c} L)\right), t_k^n<\infty, B_{n,k}\right]\\
&\leq2\kappa^L \left(\exp\left( c | l |_{\infty}\kappa^L
L^2\right)(1-\widetilde{c}L^2\kappa^L)\right)^{[\frac{n}{L^2}]}\overline{E}_0\left[
\exp\left( c\kappa^LM_k\right), t_k^{(0)}<\infty\right]
\end{align*}
holds. Performing summation on $n$ one has that there exists
$\mathfrak{c}>0$ so that
\begin{align}
\label{estimategeom}
&2\sum_{n\geq L^2}\overline{E}_0\left[\exp\left( c\kappa^L
X_{S_{k+1}}\cdot l\right),t_k^{(n)}<\infty, B_{n,k},
A_{n,k}\right]\\
\nonumber
&\leq\mathfrak{c}\kappa^L L^2\frac{1}{\exp\left( -c | l |_{\infty}\kappa^L L^2\right)-(1-\widetilde{c}L^2\kappa^L)}\overline{E}_0\left[\exp\left( c\kappa^L M_k\right), t_k^{(0)}<\infty\right].
\end{align}
It follows that for some small enough constant $c>0$, there
exists $\mathrm{c}>0$ such that
\begin{align*}
&\sum_{n\geq L^2}\overline{E}_0\left[\exp\left( c\kappa^L X_{S_{k+1}}\cdot l\right),t_k^{(n)}<\infty, B_{n,k}, A_{n,k}\right]\\
&\leq\mathrm{c}\overline{E}_0\left[ \exp\left( c\kappa^LM_k\right), t_k^0<\infty\right]\leq
\mathrm{c}\overline{E}_0\left[ \exp( c\kappa^LM_k),R_k<\infty\right]\\
&=\mathrm{c} \overline{E} \left[\exp\left( c\kappa^LX_{S_k}\cdot l\right),S_k<\infty , \exp\left(\kappa^L(M_k-l\cdot X_{S_k})\right), D'\circ\theta_{S_k}<\infty\right].
\end{align*}
Using the Markov property and the product structure of the probability measure $Q$, we have
\begin{align}
\nonumber
\overline{E}_0 \left[\exp\left( c\kappa^LX_{S_k}\right),S_k<\infty , \exp\left(\kappa^L(M_k-l\cdot X_{S_k})\right), D'\circ\theta_{S_k}<\infty\right]&\\
\nonumber
=\sum_{x\in\mathbb Z^d, n\in \mathbb N}\mathbb E \big[ E_{Q\otimes P_{\varepsilon, \omega}^0}[\exp\left(c\kappa^L x\cdot l\right), S_k=n,X_n=x]&\\
\label{Mdinequality}
\times E_{Q\otimes P_{\theta_n \varepsilon, \theta_x\omega}}[\exp\left(c\kappa^L\overline{M}\right), D'<\infty]\big]&,
\end{align}
provided we define:
\begin{equation}
\label{Md}
\overline{M}=\sup_{0\leq n \leq D'}\{(X_n-X_0)\cdot l\}.
\end{equation}
At this point we can apply the same sort of procedure as the one
developed to get the rightmost expression in
(\ref{splitinequality}). More precisely, the last expression in
(\ref{Mdinequality}) can be bounded from above by means of the
following sequence of steps (recall definition (\ref{sigmafrak}),
together with sets $H$ and $\Lambda$, introduced after
(\ref{splitinequality})):
\begin{align}
\nonumber
&\sum_{x\in\mathbb Z^d, n\in \mathbb N}\mathbb E \left[ E_{Q\otimes P_{\varepsilon, \omega}^0}[\exp\left(c\kappa^L x\cdot l\right), S_k=n,X_n=x]\right.\\
\nonumber
&\left. \times E_{Q\otimes P_{\theta_n \varepsilon, \theta_x\omega}}[\exp\left(c\kappa^L\overline{M}\right), D'<\infty]\right]\\
\nonumber
&=\sum_{x\in\mathbb Z^d, n\in \mathbb N}\mathbb E \left[ E_{Q\otimes P_{\varepsilon, \omega}^0}[\exp\left(c\kappa^L x\cdot l\right), S_k=n,X_n=x]\right.\\
\nonumber
&\left.\times\mathbb E\left[E_{Q\otimes P_{\theta_n \varepsilon, \theta_x\omega}}[\exp\left(c\kappa^L\overline{M}\right), D'<\infty]|\mathfrak{F}_{x,L}\right]\right]\\
\nonumber
&\leq\sum_{x\in\mathbb Z^d, n \in \mathbb N}\mathbb E \Bigg[ E_{Q\otimes P_{\varepsilon, \omega}^0}[\exp\left(c\kappa^L x\cdot l\right), S_k=n,X_n=x]\\
\nonumber
&\times\exp\Big(C\sum_{x\in\partial^r(H^c), y \in \partial^r(\Lambda^c)}e^{-g| x-y|_1}\Big)\times \overline{E}_0\left[\exp\left(c\kappa^L\overline{M}\right), D'<\infty\right]\Bigg]\\
\nonumber
&\leq 2\overline{E}_0\left[\exp\left(c\kappa^LX_{S_k}\cdot l\right),
S_k<\infty\right]\times
\overline{E}_0\left[\exp\left(c\kappa^L\overline{M}\right),
D'<\infty\right].
\end{align}
Thus an induction argument makes us conclude that for a suitable
constant $\mathrm{c}>0$,
\begin{align*}
&\overline{E}_0\left[\exp\left(c\kappa^LX_{S_{k+1}}\cdot l\right), S_{k+1}<\infty\right] \\
&\leq\left(\overline{E}_0\left[\mathrm{c}\exp\left(c\kappa^L\overline{M}\right), D'<\infty\right]\right)^k\times \overline{E}_0\left[\exp\left(c\kappa^LX_{S_{1}}\cdot l\right), S_{1}<\infty\right].
\end{align*}
On the other hand, for $k=0$, the inequality (\ref{inxsk}) is still
being true. As a consequence, one can obtain the same upper bound as in
the rightmost expression of (\ref{estimategeom}) when $k=0$ (which
implies in turn that $M_0=0$). Hence as a result,
\begin{equation}
\label{recursionSk}
\overline{E}_0\left[\exp\left(c\kappa^LX_{S_{k}}\cdot l\right), S_{k}<\infty\right]\leq \left(E\left[\mathbf{c}\exp\left(c\kappa^L\overline{M}\right), D'<\infty\right]\right)^{k},
\end{equation}
holds.

\vspace{0.5ex}
\noindent
The following auxiliary result will finish the proof.
\begin{lemma}[under \textbf{(T)}$_\ell$]
\label{expintM}
There exist constants $c_4, c_5>0$, such that
\begin{equation}
\label{expbM}
E_0\left[\exp (c_4\overline{M}), D'<\infty\right]<c_5.
\end{equation}
\end{lemma}
\begin{proof}
We observe that replacing $c$ by $c/| l |_2$ below, it will be
sufficient to prove that for some $c>0$, there exists finite $c'>0$
such that
\begin{equation*}
E_0\left[\exp \left(cM'\right), D'<\infty\right]<c',
\end{equation*}
where as a matter of definition, we have denoted by:
\begin{equation*}
M'=\sup_{0\leq n \leq D'}\{(X_n-X_0)\cdot \ell\} \,\,\mbox{(cf. (\ref{rul})).}
\end{equation*}
Notice that
\begin{align}
\nonumber
&E_0[\exp\left(cM'\right), D'< \infty ]\leq e^cP_0[D'< \infty ]+\\
\nonumber
&\sum_{m\geq0} \exp\left(c2^{m+1}\right)P_0[2^m\leq M'<2^{m+1},D'< \infty ].
\end{align}
As a consequence of the previous decomposition inequality, it suffices to
obtain an appropriate upper bound for large $m$ of the probability:
$$
P_0[2^m\leq M'<2^{m+1},D'< \infty ].
$$
To this end, it will be convenient to introduce the
following stopping time for the canonical filtration of the walk:
\begin{equation}
\label{D0}
D'(0)=\inf\{n\geq0: X_n\notin C(0,l,\zeta)\}
\end{equation}
Plainly, using the notation of (\ref{rightst})-(\ref{enandex}) one has the
inequality:
\begin{align}
\label{decompositionM}
&P_0[2^m\leq M'<2^{m+1},D'< \infty ]\\
\nonumber
&\leq P_0[T^\ell _{2^m} \leq D'<\infty, T^\ell_{2^{m+1}}\circ\theta_{T^\ell_{2^m}}
> D'(0)\circ\theta_{T^\ell_{2^m}}]\\
\nonumber
&\leq P_0[ X_{T^\ell_{2^m }}\not \in \partial^+B_{2^m,\mathfrak{r}2^m,\ell}(0),  T^\ell_{2^m } \leq D'<\infty ]\\
\nonumber
&+P_0[X_{T^\ell_{2^m}}\in
\partial^+B_{2^m,\mathfrak{r}2^m,\ell}(0), T^\ell_{2^{m+1}}\circ\theta_{T^\ell_{2^m}}> D'(0)\circ\theta_{T^\ell_{2^m}}].
\end{align}
Notice that on the event of the first probability on the rightmost
expression in (\ref{decompositionM}), $P_0$-a.s. one has
\begin{equation}
X_{T_{B_{2^m,\mathfrak{r}2^m,\ell}(0)}} \notin \partial^+B_{2^m,\mathfrak{r}2^m,\ell}(0).
\end{equation}
Therefore, condition \textbf{(T)}$_\ell$ implies that for large $m$,
\begin{align}
\nonumber
P_0[ X_{T^\ell_{2^m }}\not \in
\partial^+B_{2^m,(\frac{2}{\varepsilon})2^m,\ell}(0), T^\ell_{2^m } \leq
D'<\infty ]\\
\label{expb1}
\leq \exp\left(-\mathfrak{c}2^m\right)
\end{align}
for some suitable positive constant $\mathfrak{c}$. As for the
second term on the rightmost expression of (\ref{decompositionM}),
for $m\in \mathbb N$ we introduce the boundary box $F_{m}$ via:
$$
F_{m}=\partial^+B_{2^m,\mathfrak r 2^m,\ell}(0).
$$
Applying the strong Markov property we find that
\begin{align}
\nonumber
&P_0[X_{T^\ell _{2^m}}\in
\partial^+B_{2^m,\mathfrak r 2^m,\ell}(0), T^\ell
_{2^{m+1}}\circ\theta_{T^\ell_{2^m}}>
D'(0)\circ\theta_{T^\ell_{2^m}}]\\
\label{expb2}
&\leq \sum_{y\in F_{m}}P_y[T^\ell _{2^{m+1} }>D'(0)].
\end{align}
In order to estimate the rightmost probability entering in
(\ref{expb2}), we will bound from below the probability of its
complementary event as follows. Introducing for $x\in\mathbb{Z}^d$,
the set:
\begin{equation}
B_x=B_{2^{m-1},\mathfrak{r}2^{m-1},\ell}(x),
\end{equation}
we note that under the assumption (\ref{zeta}) we have
$$
\mathfrak{r}\left(2^m+2^{m-1}\right)\leq \tan \left( \frac{\pi}{2}-\arccos(\zeta)\right)\,2^{m-1},
$$
which implies that the boxes $B_y$ and $B_z$,
where $y\in F_{m}$ and $z\in \partial^+ B_y$, are both inside of the cone
$C(0,l,\zeta)$ (see Figure \ref{fig} below).
\begin{figure}[h]
  \begin{center}
  \includegraphics[width=7cm]{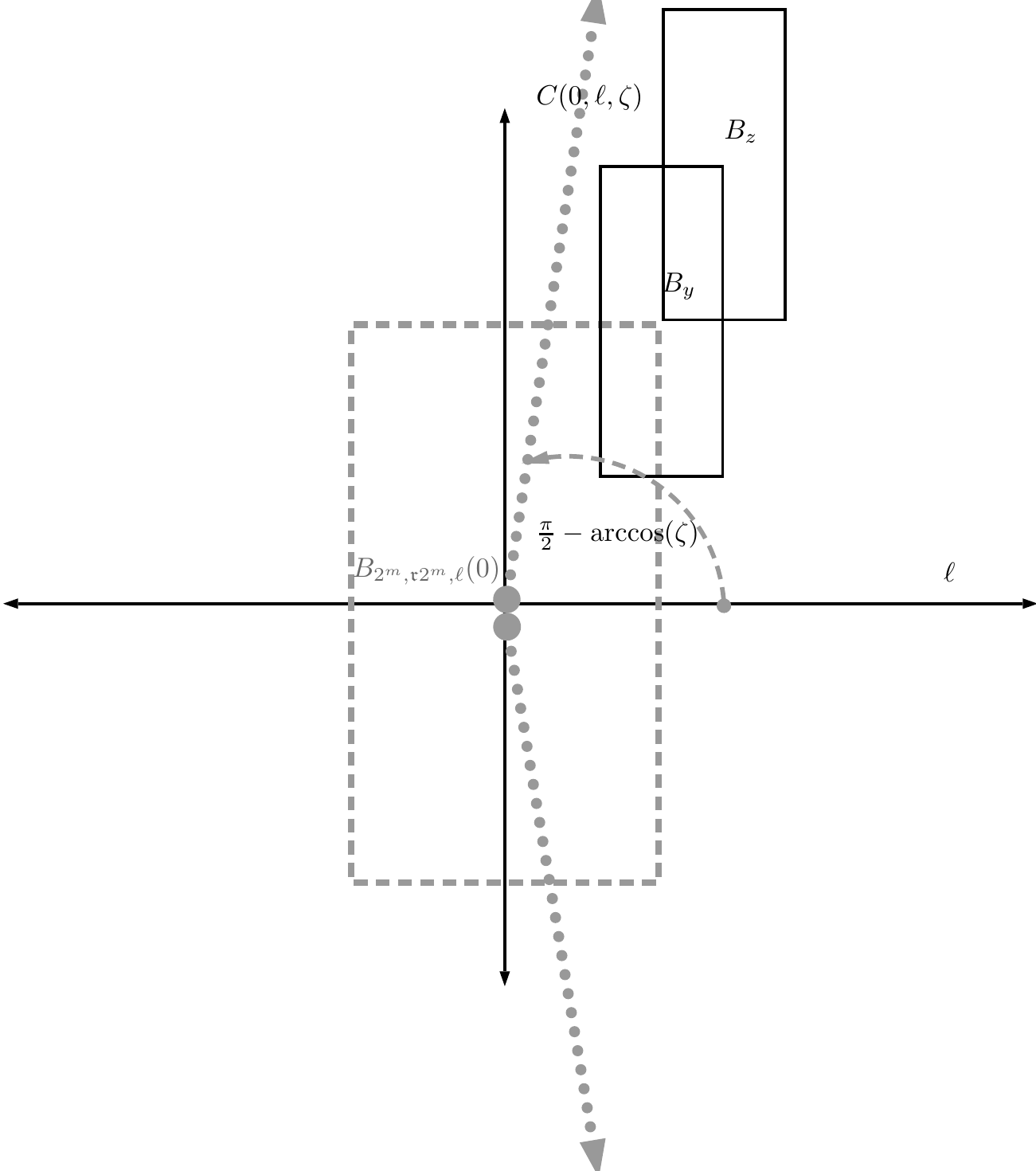}\\
  \caption{Boxes $B_y$ and $B_z$ are inside of $C(0,l,\zeta)$.}\label{fig}
  \end{center}
\end{figure}

\medskip
\noindent
Observe that for $y \in F_m$, one has the following lower bound:
\begin{align}
\nonumber
 & P_y[T^\ell_{2^{m+1}} < D'(0)]\\
\label{Pyc2}
&\geq \sum_{z \in  \partial^+ B_y}\mathbb{E}[P_{y,\omega}[X_{T_{B_y}}\in
\partial^+ B_y, X_{T_{B_y}}=z,
(X_{T_{B_z}}\in\partial^+ B_z)\circ\theta_{T_{B_y}}]].
\end{align}
To estimate the right-hand side of the above inequality, it will be
convenient to introduce for $m\in \mathbb N$, the \textit{second
boundary} set $\bar F_{m}$ as
$$
\bar F_{m}:=\partial[\cup_{y\in F_{m}}B_y]\cap  R([ 2^{m-1}+2^m,\infty)\times \mathbb{R}^{d-1}),
$$
and in turn for that given set $\bar F_{m}$ we introduce the
\textit{good environment} event $G_{\bar F_{m}}$ by
\begin{align*}
G_{\bar F_{m}}:=&\{\omega\in \Omega : \ P_{z,\omega}[X_{T_{B_z}}\in
\partial^+B_z]>\\ &1-\exp(-\mathrm{c} 2^{(m-1)}), \ \mbox{for all
}z \in \bar F_{m} \},
\end{align*}
where the constant $\mathrm{c}>0$ will be chosen below.
Using the strong Markov property, we can now bound from below the
right-hand side of inequality (\ref{Pyc2}) by
\begin{equation}
\label{expression}
\left(1-\exp(-\mathrm{c} 2^{(m-1)})\right)\left(P_y[X_{T_{B_y}}\in \partial^+
B_y]- P_y[(G_{\bar F_{m}})^c]\right),
\end{equation}
where for an event $E$, we denote by $(E)^c$ its complementary
event.

\vspace{0.5ex}
\noindent
Furthermore, using stationarity under the probability measure
$\mathbb P$ and condition \textbf{(T)}$_\ell$, for $x\in\mathbb R^d$ and
large $m$ one has
\begin{align}
\nonumber
P_{x}[X_{T_{B_x}}\not\in \partial^+B_x]=P_{0}[X_{T_{B_0}}\not\in
\partial^+B_0] \\
\label{k1}
\leq \exp\left(-\mathfrak{w}2^{m-1}\right),
\end{align}
for a suitable $\mathfrak{w}>0$.

\noindent
We thus see that (\ref{expression}) is greater than
\begin{equation}
\label{in1}
\left(1-\exp\left(-\mathrm{c} 2^{(m-1)}\right)\right)\left(1-\exp\left(-\mathfrak{w}2^{m-1}\right)-P_y[(G_{\bar F_{m}})^c]\right)
\end{equation}

\noindent
Taking $\mathrm{c}=\mathfrak{w}/2$, in virtue of (\ref{k1}) and
Chevyshev's inequality we find that
\begin{align}
P_y[(G_{\bar F_{m}})^c]&\\
\nonumber
&\leq|\bar F_{m}|\exp\left(\mathrm{c}2^{(m-1)}\right)\sup_{x\in
\bar F_{m}}P_{x}[ X_{T_{B_x}}\not\in
\partial^+B_x]\leq\exp\left(-\mathfrak{t}2^{m-2}\right),
\label{estimatek1}
\end{align}
for a suitable $\mathfrak{t}>0$, where we have used for $m\in \mathbb N$ the coarse estimate:
$$
\max\left(|\bar F_{m}|, |F_m| \right)\leq \left(6\mathfrak{r}2^{m}\right)^{d-1}.
$$
Consequently, for large $m$ we can find a further positive
constant $\widetilde{c}$ such that:
\begin{equation}
\label{Destimate}
P_y[T^\ell_{2^{m+1}} \leq D'(0)]\geq 1-\exp(-\widetilde{c}2^m)
\end{equation}
for all $y\in \bar{F}_m$.

\vspace{0.2ex}
In view of (\ref{expb1}), (\ref{expb2}) and (\ref{Destimate}), the
claim (\ref{expbM}) follows.
\end{proof}
As it was mentioned the assertion in (\ref{expmompos}) follows from
(\ref{expbM}) and $P_0[D'=\infty]>0$, with the help of estimate
(\ref{recursionSk}).
\end{proof}

\medskip
We are now ready to spell out some consequences of the previous
proposition. We first define the random variable $Y$ as
\begin{equation}
\label{supt1}
Y=\sup_{\substack{0\leq n \leq \tau_1}}|X_n|_2.
\end{equation}
We can prove the following reinforcement to Theorem \ref{expmpr}:
\begin{corollary}[under \textbf{(T)}$_\ell$]\label{corexp}
Assume either: \textbf{(SM)}$_{C,g}$ or \textbf{(SMG)}$_{C,g}$. Then there exist positive constants $c_{6}$, $c_{7}$ and $L_0$ such that
\begin{equation}
\label{esupt1}
\overline{E}_0[e^{c_{6}\kappa^L Y}]\leq c_{7}.
\end{equation}
provided that $L\geq L_0$, $L\in |l|_1\mathbb N$.
\end{corollary}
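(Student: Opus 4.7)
\emph{Plan.} My strategy is to re-run the recursion used in the proof of Proposition \ref{expmpr}, replacing the scalar quantity $X_{S_k}\cdot l$ by the vector supremum $\sup_{0\leq n\leq S_k}|X_n|_2$. The guiding observation is that wherever that proof used cone containment to estimate $X_n\cdot l$, the same containment estimates $|X_n|_2$ up to a factor $1/(\zeta|l|_2)$.

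Concretely, first introduce $\widehat{M}:=\sup_{0\leq n\leq D'}|X_n-X_0|_2$. Since $X_n\in C(X_0,l,\zeta)$ for $n<D'$, one has $|X_n-X_0|_2\leq (X_n-X_0)\cdot l/(\zeta|l|_2)$, and at $n=D'$ the walk is at most one step outside the cone, so $\widehat{M}\leq \overline{M}/(\zeta|l|_2)+1$; Lemma \ref{expintM} then supplies constants $\widehat{c},\widehat{c}'>0$ with $E_0[\exp(\widehat{c}\,\widehat{M}),\,D'<\infty]<\widehat{c}'$. Next, on the event $\{t_k^{(n)}<\infty, B_{n,k}, A_{n,k}\}$ used in the proof of Proposition \ref{expmpr} (so that $S_{k+1}=t_k^{(n)}+L$), I would replace the scalar bound (\ref{inxsk}) by a vector-valued one,
\[
\sup_{0\leq m\leq S_{k+1}}|X_m|_2\leq \sup_{0\leq m\leq R_k}|X_m|_2+\sum_{j=1}^{n}\xi_{k,j}+CL,
\]
where $\xi_{k,j}$ denotes the Euclidean excursion of the walk from the sub-record $X_{t_k^{(j-1)}}$ up to the next sub-record $X_{t_k^{(j)}}$. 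Each $\xi_{k,j}$ is, via the mixing hypothesis and the approximate renewal structure of Corollary \ref{corren}, conditionally dominated (up to the factor $\mu(L)$) by an independent copy of $\widehat{M}$. Reproducing the summation over $n$ exactly as in (\ref{estimategeom})--(\ref{recursionSk}) in this vector-valued form then produces the geometric recursion
\[
\overline{E}_0\Big[\exp\bigl(c\kappa^L\!\!\sup_{0\leq m\leq S_{k+1}}\!|X_m|_2\bigr), S_{k+1}<\infty\Big]\leq \mathcal{C}\cdot \overline{E}_0\Big[\exp\bigl(c\kappa^L\!\!\sup_{0\leq m\leq S_k}\!|X_m|_2\bigr), S_k<\infty\Big]
\]
with $\mathcal{C}<1$ for small $c$ and large $L$. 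Summing over $k$, using $P_0[D'=\infty]\geq c_1>0$ from Lemma \ref{Dundert}, then yields (\ref{esupt1}).

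The principal obstacle is the control of the excursions $\xi_{k,j}$ during the failed-attempt intervals $[t_k^{(j-1)},t_k^{(j)}]$: between successive sub-records the walk lies below its current record in direction $l$ and is not confined to any forward cone centered at $X_{t_k^{(j-1)}}$, so a direct cone bound is unavailable. The resolution is to notice that, conditional on a further sub-record being reached at $t_k^{(j)}$, the excursion issued from $X_{t_k^{(j-1)}}$ is comparable in law (via Corollary \ref{corren} and the mixing assumption, up to the factor $\mu(L)$) to a fresh walk conditioned on $\{D'<\infty\}$, so the exponential moments of $\widehat{M}$ from the first step transfer to $\xi_{k,j}$; the factor $\kappa^L$ in the exponent of (\ref{esupt1}) enters through the probability of the $\bar{\varepsilon}^{(L)}$ pattern embedded in each $S_j$, exactly as in the proof of Proposition \ref{expmpr}.
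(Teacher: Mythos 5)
Your approach tries to repeat the recursion of Proposition \ref{expmpr} with $X_{S_k}\cdot l$ replaced by $\sup_{0\leq m\leq S_k}|X_m|_2$, but the step you yourself flag as ``the principal obstacle'' is where the argument breaks down, and the proposed resolution does not repair it. Between sub-records $t_k^{(j-1)}$ and $t_k^{(j)}$ the walk may travel a long Euclidean distance while staying \emph{below} its current record in direction $l$; this happens after the walk has already left the forward cone issued from $X_{S_k}$ (recall that $R_k<\infty$ on the event $S_{k+1}<\infty$). The cone inequality $\widehat M\leq \overline M/(\zeta|l|_2)+1$ is valid only while the walk remains inside $C(X_0,l,\zeta)$, so it gives no control on $\xi_{k,j}$. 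Nor does the ``comparison to a fresh walk conditioned on $\{D'<\infty\}$'' follow from Corollary \ref{corren}: that corollary compares the post-$\tau_j$ increments to a fresh walk conditioned on $\{D'=\infty\}$, and in any case Lemma \ref{expintM} only bounds the \emph{forward} overshoot $\overline{M}=\sup_{n\leq D'}(X_n-X_0)\cdot l$ inside the cone, which says nothing about the backward and transverse Euclidean oscillations after the cone exit and before the next record. Controlling those oscillations is exactly the content one cannot obtain from the renewal recursion alone; it requires invoking condition $(T)$ once more, against a \emph{box} rather than a slab.

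This is precisely what the paper does, and it does so without any recursion at all. The actual proof is a three-line tail estimate: take the finite sets $\Delta_M$ of item (ii) of Lemma \ref{T}, which lie inside $\{|x|_2\leq \hat r M\}$. If $Y\geq u$ then the walk exits $\Delta_{u/(2\hat r)}$ before $\tau_1$; split on whether $X_{\tau_1}\cdot l\geq u/(2\hat r)$ or not. The first event is exponentially unlikely by Proposition \ref{expmpr}, the second forces $X_{T_{\Delta_{u/(2\hat r)}}}\notin \partial^+\Delta_{u/(2\hat r)}$ (because every $X_m\cdot l<X_{\tau_1}\cdot l$ for $m<\tau_1$), which is exponentially unlikely by condition $(T)$. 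A layer-cake integration then converts the tail bound on $Y$ into the exponential moment (\ref{esupt1}). Your recursion-based route, even if the gap were repaired, would reprove Proposition \ref{expmpr} from scratch in a harder vector-valued form rather than use it as a black box, so it is not the intended argument; and as written it contains an unjustified step that cannot be filled by the tools you cite.
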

\begin{proof}
Using item (ii) of Lemma \ref{T}, notice that for large $u$,
\begin{align}
  \nonumber
  &\bar{P}_0\left[Y\geq u\right]=\bar{P}_0\left[\sup_{0\leq n\leq \tau_1}|X_n|_2\geq u\right] \\
  \nonumber
  &\leq \bar{P}_0\left[T_{\Delta_{\frac{u}{2\hat{r}}}}<\tau_1\right] \\
  \nonumber
  &\leq \bar{P}_0\left[X_{\tau_1}\cdot l\geq \frac{u}{2\hat{r}} \right]+\bar{P}_0\left[X_{\tau_1}\cdot l<\frac{u}{2\hat{r}}, \, T_{\Delta_{\frac{u}{2\hat{r}}}}<\tau_1\right]\\
  \label{estimexpT}
  &\leq \exp\left(-\kappa^L\,c_2\frac{u}{2r}\right)\bar{E}_0\left[\exp\left(c_2\kappa^L\,X_{\tau_1}\cdot l\right)\right]+P_0\left[X_{T_{\Delta_{\frac{u}{2\hat{r}}}}}\notin \partial^+\Delta_{\frac{w}{2\hat{r}}}\right],
\end{align}
where in the last step we have used that by definition $X_m\cdot
l<X_{\tau_1}\cdot l$, when $0\leq m <\tau_1 $. Keeping in mind the
\textit{layer cake decomposition} (cf. \cite{Ru87}, Chapter 8, Theorem 8.16), the claim of the corollary
follows after applying condition \textbf{(T)}$_\ell$ and Proposition \ref{expmpr}.
\end{proof}

In order to state the next proposition it will be useful to fix
some further notation. For $L\in |l|_1\mathbb N$, we introduce
\textit{the approximate asymptotic direction} denoted as $\hat{v}_L\in \mathbb S^{d-1}$ and given by
\begin{equation}
\label{asym}
\hat{v}_L:=\frac{\overline{E}_0[X_{\tau_1}| D'=\infty]}{|\overline{E}_0[X_{\tau_1}| D'=\infty]|}.
\end{equation}
which a priori depends on $L$, however when there is not risk of
confusion, we shall drop it.

\vspace{1ex}
As explained in \cite{GR17} Proposition 7.2. page 34, one has:
\begin{proposition}
\label{remark}
There exist positive constants $k_1$ and $k_2$ (not depending on
$L$) such that for any $L\in |l|_1\mathbb N$,
\begin{align*}
&\overline E_0[(\kappa^L\,X_{\tau_1}\cdot l)|D'=\infty]\geq k_1 \hspace{1ex}\mbox{ and}
&|\overline E_0[(\kappa^L\,X_{\tau_1})|D'=\infty]|_2\geq k_2.
\end{align*}
\end{proposition}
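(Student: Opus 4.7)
The second inequality is a direct consequence of the first: by linearity of conditional expectation and Cauchy--Schwarz,
\[
\overline{E}_0[\kappa^L X_{\tau_1}\cdot l\,|\,D'=\infty]=\overline{E}_0[\kappa^L X_{\tau_1}\,|\,D'=\infty]\cdot l\leq |\overline{E}_0[\kappa^L X_{\tau_1}\,|\,D'=\infty]|_2\,|l|_2,
\]
so one may take $k_2:=k_1/|l|_2$. It thus suffices to prove the first inequality.

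My plan is to show $\overline{E}_0[\kappa^L X_{\tau_1}\cdot l;\,D'=\infty]\geq c$ for some $c>0$ independent of $L$; dividing by $P_0[D'=\infty]\in[c_1,1]$ (cf.\ Lemma~\ref{Dundert}) then yields the statement with $k_1=c/c_1$. First I would restrict to the event $\{K=1\}=\{S_1<\infty,D'\circ\theta_{S_1}=\infty\}$, on which $\tau_1=S_1$, and use Proposition~\ref{propare} to handle the mixing-coupling between $\{D'=\infty\}$ and $\{D'\circ\theta_{S_1}=\infty\}$: the latter event has conditional probability bounded below by $\mu(L)^{-1}P_0[D'=\infty]\geq \mu(L)^{-1}c_1$ given $\mathcal{G}_1$, and $\mu(L)\to 1$. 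This reduces the problem to bounding $\overline{E}_0[\kappa^L X_{S_1}\cdot l;\,D'=\infty]$ from below by a positive constant.

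Next I would lower bound $X_{S_1}\cdot l$ pathwise. Writing the increasing sequence of strict record times of $X\cdot l$ as $\tilde r_1<\tilde r_2<\cdots$, on $\{S_1<\infty\}$ one has $S_1-L=\tilde r_{I}$, where $I$ is the first record index for which $(\varepsilon_{\tilde r_{I}},\ldots,\varepsilon_{\tilde r_{I}+L-1})=\bar\varepsilon^{(L)}$. Since records of the integer-valued process $X_n\cdot l$ strictly increase, $X_{\tilde r_I}\cdot l\geq I$, and the ensuing $L$ steps dictated by $\bar\varepsilon^{(L)}$ add exactly $L|l|_2^2/|l|_1$ in direction $l$; hence $X_{S_1}\cdot l\geq I+L|l|_2^2/|l|_1$. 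The problem therefore reduces to showing $\overline{E}_0[I;\,D'=\infty]\gtrsim \kappa^{-L}$.

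The main technical obstacle is this last estimate. The heuristic is that successive record times hit essentially independent $L$-blocks of $\varepsilon$---after thinning to record times that are at least $L$ apart, which under condition~\textbf{(T)}$_\ell$ happens with positive density along the trajectory---and each such block matches $\bar\varepsilon^{(L)}$ with $Q$-probability $\kappa^L$ thanks to the product structure of $Q$; hence $I$ stochastically dominates a geometric random variable with parameter of order $\kappa^L$, yielding the required $\kappa^{-L}$ lower bound on its mean. Making this rigorous requires a careful conditioning that separates the (geometric, $\varepsilon$-driven) counting of successful blocks from the $(X,\omega)$-dependent spacing of records, which is precisely the delicate step carried out in \cite{GR17}, Proposition~7.2, and which transfers to our mixing setting unchanged because the $\varepsilon$-variables remain i.i.d.\ under $Q$ irrespective of the law on $\omega$.
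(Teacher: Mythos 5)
Your proposal takes the same route as the paper: the paper gives no standalone argument for this proposition and simply cites Proposition~7.2 of \cite{GR17}, which is also where you ultimately ground the key estimate $\overline{E}_0[I;\,D'=\infty]\gtrsim\kappa^{-L}$ (the record/geometric-matching heuristic you supply and the Cauchy--Schwarz deduction of the second inequality from the first are both sound). One small slip worth correcting: after establishing $\overline{E}_0[\kappa^L X_{\tau_1}\cdot l;\,D'=\infty]\geq c$, dividing by $P_0[D'=\infty]\leq 1$ yields $\overline{E}_0[\kappa^L X_{\tau_1}\cdot l\,|\,D'=\infty]\geq c$, so one should take $k_1=c$; your $k_1=c/c_1$ divides by the lower end of the interval $[c_1,1]$, which does not follow from the estimate (though it is harmless since the proposition only asserts existence of some positive constant).
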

Thus, the upper bounds obtained in this sections are sharps. Proposition \ref{remark} will be useful to prove Theorem \ref{th1} in Section \ref{seclln}.

\vspace{1ex}
We continue with the definition for $t\in \mathbb{R}$ of the random
variable
\begin{equation}
\label{lastvisit}
M_t:=\sup\{n \geq 0: X_n \cdot l\leq t\},
\end{equation}
this is \text{the last visit} to the half space $H=\{z: z
\cdot l \leq t\}$. We also define the projector operator
$\Pi=\Pi_{\hat{v}}:\mathbb R^d\,\rightharpoonup \, \mathbb R^d$
onto the orthogonal space to $\hat{v}_L$, so that for $z\in
\mathbb{R}^d$
\begin{equation*}
\Pi(z)=z-(z\cdot \hat{v})\hat{v}.
\end{equation*}
The next proposition will be fundamental to apply renormalization
arguments in order to obtain annealed estimates of atypical quenched
escapes for the walk.

\medskip
\begin{proposition}[under \textbf{(T)}$_\ell$, see (\ref{rul})]
\label{asymtrafluc}
Let $C,g>0$ and assume either: \textbf{(SM)}$_{C,g}$ or \textbf{(SMG)}$_{C,g}$ and (\ref{Rgkappa}).
Let $\gamma\in(5/9,1)$ and $\rho>0$. Then there exists $c_8=c_8(d, \rho, \kappa, l)>0$, so that
for large $u$ one has that
\begin{equation}
\label{projection}
P_0\left[\sup_{0\leq n\leq M_{u}}|\Pi_{\hat{v}}(X_n)|\geq \rho
u^\gamma\right]\leq \exp\left(-c_8
u^{\frac{9}{4}\gamma-\frac{5}{4}}\right),
\end{equation}
with the notation as in (\ref{lastvisit}) and
$\hat{v}=\hat{v}_L$ is the vector defined by
(\ref{asym}), where for a fixed number $t\in(1/2,1)$ with
\begin{equation}\label{choiset}
gt>18\log\left(\frac{1}{\kappa}\right),
\end{equation}
$L$ is the least integer in $|l|_1\mathbb N$, such that:
\begin{equation*}
\exp\left(-g\, tL\right)\leq u^{2\gamma-2+\frac{\gamma-1}{4}}.
\end{equation*}
\end{proposition}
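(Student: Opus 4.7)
The strategy is to transfer the problem to an i.i.d.\ regeneration-increment setting via Corollary \ref{corren}, and then combine a maximal Bernstein-type inequality with a union bound on intra-block oscillations. Write $\xi_i := X_{\tau_i} - X_{\tau_{i-1}}$. By the very definition of $\hat{v}_L$ in (\ref{asym}), the increments are approximately i.i.d.\ with $\overline{E}_0[\Pi_{\hat{v}}(\xi_i)\mid D'=\infty]=0$, and Proposition \ref{expmpr} together with Corollary \ref{corexp} provide sub-exponential control for $|\xi_i|_2$ and the intra-block oscillations, both at scale $\kappa^{-L}$.

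The first step is to bound $K(u):=\sup\{k\geq 0: \tau_k\le M_u\}$. The cone property forces $X_n\cdot l\ge X_{\tau_k}\cdot l$ for every $n\ge \tau_k$, so $\tau_{N_0+1}\le M_u$ entails $X_{\tau_{N_0+1}}\cdot l\le u$. Setting $N_0:=\lceil C_0 u\kappa^L\rceil$ with $C_0$ large enough, the event $\{K(u)>N_0\}$ thus forces $\sum_{i=1}^{N_0+1}\xi_i\cdot l\le u$, well below the mean of order $C_0 k_1^{-1} u$ furnished by Proposition \ref{remark}. An exponential Chebyshev estimate at scale $\kappa^L$, combined with the multiplicative error $\mu(L)^{N_0+1}$ produced by iterating Corollary \ref{corren}, yields $\overline{P}_0[K(u)>N_0]\le \exp(-c\, u\kappa^L)$. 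Since $\kappa^L\ge c\, u^{-(1-\gamma)/8}$ under (\ref{Rgkappa}) and the prescribed choice of $L$, this rate is stronger than the claimed one.

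On $\{K(u)\le N_0\}$ one has the deterministic inequality
\[
\max_{0\le n\le M_u}|\Pi_{\hat{v}}(X_n)|\;\le\;\max_{1\le k\le N_0}\Bigl|\sum_{i=1}^k \Pi_{\hat{v}}(\xi_i)\Bigr|\;+\;\max_{1\le k\le N_0+1}Y_k,
\]
where $Y_k:=\sup_{\tau_{k-1}\le n\le \tau_k}|X_n-X_{\tau_{k-1}}|_2$. For the first term I would apply a Doob-type maximal inequality followed by a Bernstein bound for sums of centered sub-exponential vectors, coupling the actual joint law to the i.i.d.\ product via Corollary \ref{corren} at the cost of a factor $\mu(L)^{N_0}$. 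The second term is controlled by a union bound together with Corollary \ref{corexp}, giving $\overline{P}_0[\max_{k\le N_0+1}Y_k\ge \rho u^\gamma/2]\le \mu(L)^{N_0+1}(N_0+1)\, c_7\exp(-c_6\kappa^L\rho u^\gamma/2)$.

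The main obstacle, and the reason for assumption (\ref{Rgkappa}), is to ensure that the multiplicative error $\mu(L)^{N_0}=\exp(N_0 e^{-gtL})\le \exp(u^{(9\gamma-5)/4})$ does not spoil the concentration. Under the prescribed choice of $L$ and the condition $gt>18\log(1/\kappa)$, the Bernstein exponent is of order $u^{2\gamma-1}\kappa^L\ge c\, u^{(17\gamma-9)/8}$ and the oscillation exponent is of order $u^\gamma\kappa^L\ge c\, u^{(9\gamma-1)/8}$; both strictly exceed $(9\gamma-5)/4=(18\gamma-10)/8$ for every $\gamma\in(5/9,1)$. Consequently the $\mu(L)^{N_0}$ factor is absorbed by the dominant concentration rate and the combined bound is $\exp(-c_8\, u^{(9\gamma-5)/4})$, as required.
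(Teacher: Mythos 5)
Your proposal is broadly correct and achieves the stated bound, but it takes a genuinely different technical route from the paper, so it is worth comparing the two.

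The paper does not need a probabilistic bound on the number of regenerations up to time $M_u$. Since each approximate regeneration pushes the walk forward by at least $L|l|_2/|l|_1$ in direction $l$ (the forced $\bar{\varepsilon}^{(L)}$ push), the paper uses the \emph{deterministic} bound $K_n \le |l|_1u/(|l|_2L)$ on $\{n\le M_u\}$, then applies a plain union bound over $k$ (not a maximal inequality) and a carefully tuned exponential Chernoff tilt $\lambda=\varrho\, u^{5(\gamma-1)/4}$, with the mean of $X_{\tau_1}\cdot w$ under $\overline P_0[\,\cdot\,|D'=\infty]$ vanishing since $w\perp \hat v$. The Taylor expansion of the moment generating function then produces exactly the rate $u^{(9\gamma-5)/4}$, and (\ref{choiset}) enters because it guarantees $\kappa^L\gtrsim u^{-(1-\gamma)/8}$, so that $\lambda$ stays within the allowed window $|\lambda|\le c_6\kappa^L$ while the quadratic term of $H(\lambda)$ remains of order $u^{9(\gamma-1)/4}$. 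Your route instead proves a separate large-deviation bound $\overline P_0[K(u)>C_0u\kappa^L]\le e^{-cu\kappa^L}$ (which, being sharper than the paper's deterministic count by a factor $L\kappa^L$, actually gives more slack in the Bernstein exponent), and then couples the joint law of the increments to the product law via the Radon--Nikodym bounds from Corollary \ref{corren}. Your arithmetic is correct: the three rates $u^{(7+\gamma)/8}$, $u^{(17\gamma-9)/8}$, $u^{(9\gamma-1)/8}$ all dominate $u^{(18\gamma-10)/8}$ for $\gamma\in(5/9,1)$, and the $\mu(L)^{N_0}$ correction, of order $\exp(u^{(9\gamma-5)/4}\kappa^L)$, is indeed absorbed.

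A few points to be careful about if you were to flesh this out. First, the increments $\Pi_{\hat v}(\xi_i)$ are only \emph{approximately} centered conditional on the past: $\hat v_L$ is defined so that $\overline E_0[\Pi_{\hat v}(X_{\tau_1})|D'=\infty]=0$, but $\overline E_0[\Pi_{\hat v}(\xi_i)|\mathcal G_{i-1}]$ is only $O(\kappa^{-L}e^{-gtL})$. One must check that the cumulative drift $N_0\kappa^{-L}e^{-gtL}=O(u^{(9\gamma-5)/4})$ is $o(u^\gamma)$, which holds since $\gamma<1$, but this is an explicit step your sketch omits. Second, the first increment $\xi_1=X_{\tau_1}$ is not distributed according to the conditioned law $\overline P_0[\,\cdot\,|D'=\infty]$; the paper isolates the term $\overline P_0[X_{\tau_1}\cdot w\ge \rho u^\gamma/3]$ for precisely this reason, and you should handle $\xi_1$ separately as well. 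Third, since the partial sums are only approximately a martingale, a ``Doob-type maximal inequality'' cannot be invoked verbatim; either decompose into a genuine martingale plus the small drift, or follow the paper and replace the maximal inequality by a union bound over $k$ followed by a single Chernoff tilt per term. Lastly, a small typo: the mean of $\sum_{i=1}^{N_0+1}\xi_i\cdot l$ is of order $C_0k_1 u$, not $C_0k_1^{-1}u$, since Proposition \ref{remark} gives $\overline E_0[X_{\tau_1}\cdot l\,|\,D'=\infty]\ge k_1\kappa^{-L}$. None of these issues break the argument, but they are the places where the paper's more explicit computation pays off.
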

\begin{proof}
 Fix $\gamma \in (5/9, 1)$, $t$ as in (\ref{choiset}) and consider a large enough $u$ so that the least integer $L\in |l|_1\mathbb N$ satisfying
\begin{equation}\label{minimazerL}
\exp\left(-g\, tL\right)\leq u^{2\gamma-2+\frac{\gamma-1}{4}},
\end{equation}
is also satisfying the following requirements:
\begin{align}
\label{constraintL1}
&\,\,L\geq L_0,\,\,\\
\label{constraintL2}
&\,\,L\geq
\frac{6(2c_7+1)|l|_1e^{\frac{g|l|_1}{36}}}{|l|_2\rho\, c_6}\,\,
\mbox{ and }\\
\label{constraintL3}
&\,\,u^{2(\gamma-1)+(\frac{\gamma
-1}{4})}\leq\frac{1}{2}.
\end{align}
Above, constant $L_0$ is as in the statement of
Corollary \ref{corexp}. For the rest of the proof, we will drop the
prescribed $L$ defined by (\ref{minimazerL}) and satisfying (\ref{constraintL1})-(\ref{constraintL3}))
from the notation, to set for instance: $\hat{v}=\hat{v}_L, \,
\tau_1=\tau_1^{(L)},$ and so on. Furthermore, notice that it is
sufficient to prove an analogue inequality to (\ref{projection}),
replacing $\Pi_{\hat{v}}(X_n)$ by $X_n\cdot w$, where $w\in \mathbb
S^{d-1} $ with $w \cdot \hat{v}=0$. Therefore, we will prove the
proposition under this convention and we introduce for $n\in
\mathbb N$ the random variable $K_n$, via
\begin{equation*}
K_n=\sup\{k\geq 0: \tau_k \leq n\}\,\, \mbox{ (set $\tau_0=0$.)}
\end{equation*}
Since $\overline{P}_0$-a.s. one has for $m\leq \tau_1 \leq m'$:
\begin{equation*}
X_m \cdot l\leq X_{\tau_1}\cdot l\leq X_{m'}\cdot l \,\, \mbox{ and
}\,\, X_{\tau_1}\cdot l\geq L \frac{|l|_2}{|l|_1},
\end{equation*}
it follows that $\overline{P}_0$-a.s.
\begin{equation}
\label{boundfK}
0\leq n \leq M_{u} \, \Rightarrow \,K_n\leq \frac{|l|_1}{|l|_2 L} u.
\end{equation}
Hence, for $n\in[0,M_{u}]$ and $K_n$ as above, we have (recall the
notation in display (\ref{supt1}))
\begin{equation*}
X_n\cdot w =(X_n-X_{\tau_{K_n}})\cdot w +X_{\tau_{K_n}}\cdot w\leq
Y\circ \theta_{K_n}+ X_{\tau_{K_n}}\cdot w,
\end{equation*}
and consequently for $\rho>0$ we get the inequality:
\begin{align*}
 P_0\left[\sup_{0\leq n \leq M_{u}} X_n\cdot w\geq \rho u^\gamma \right]\leq &
  \sum_{0\leq k \leq \frac{|l|_1}{|l|_2 L} u}\overline{P}_0\left[Y\circ \theta_{\tau_k}\geq \frac{\rho}{3}u^\gamma\right]\\
  +\overline{P}_0\left[X_{\tau_1}\cdot w \geq \frac{\rho}{3}u^\gamma\right]+ \sum_{2\leq k \leq \frac{|l|_1}{|l|_2 L}u}& \overline{P}_0\left[ (X_{\tau_k}-X_{\tau_1})\cdot w\geq\frac{\rho}{3}u^\gamma\right].
\end{align*}
Let $\lambda \in [0, c_{6} \kappa^L] $ and observe that an
application of Chernoff bound leads us
to
\begin{gather}
\nonumber
 P_0\left[\sup_{0\leq n \leq M_{u}} X_n \cdot w \geq \rho  u^{\gamma} \right] \leq \exp\left(-\lambda \frac{\rho}{3}u^{\gamma}\right)
\left( \sum_{0\leq k \leq \frac{|l|_1}{|l|_2 L}u}\overline{E}_0[\exp\left(\lambda Y\circ \theta_{\tau_k}\right)]\right.\\
\label{estimportant}
\left. +\overline{E}_0[\exp\left(\lambda X_{\tau_1}\cdot w\right)]+
\sum_{2\leq k \leq \frac{|l|_1}{|l|_2 L} u}\overline{E}_0[\exp\left(\lambda(X_{\tau_k}-X_{\tau_1})\cdot w \right)]\right).
\end{gather}
Let us now perform some computations required to estimate the
expectations entering in the last expression above. We first
observe that for integer $k\geq0$
\begin{align*}
\overline{E}_0\left[\exp(\lambda Y\circ \theta_{\tau_k})\right]&=\sum_{k\geq 1,n\in \mathbb N, x\in \mathbb Z^d}\mathbb E\left[E_{Q\otimes P_{\varepsilon, \omega}^0}\left[\mathds{1}_{S_k=n, X_{S_k}=x}\right]\right.\\
&\left. \times E_{Q\otimes P_{\theta_n\varepsilon,\theta_x \omega}^0}\left[\exp(\lambda Y),D'=\infty\right]\right]\\
&=\sum_{k\geq 1,n\in \mathbb N, x\in \mathbb Z^d}\mathbb E\left[E_{Q\otimes P_{\varepsilon, \omega}^0}\left[\mathds{1}_{S_k=n, X_{S_k}=x}\right]\right.\\
&\left.\times\mathbb{E}\left[ E_{Q\otimes P_{\theta_n\varepsilon,\theta_x \omega}^0}\left[\exp(\lambda Y),D'=\infty\right]| \mathfrak{F}_{x,L}\right] \right].
\end{align*}
Using the proof of the Proposition \ref{propare}, it is easy to see
that for the non-negative random variable $\lambda Y$, the
inequality
\begin{align*}
&\mathbb{E}\left[ E_{Q\otimes P_{\theta_n\varepsilon,\theta_x \omega}^0}\left[\exp(\lambda Y)|D'=\infty\right]| \mathfrak{F}_{x,L}\right]\\
&\leq \exp\left(e^{-g\,tL}\right)\overline{E}_0[\exp(\lambda Y)|D'=\infty]
\end{align*}
holds.

\noindent
Therefore, as a result we get for integer $k\geq0$ the estimate
\begin{align}
\nonumber
&\max\left\{\overline{E}_0\left[\exp(\lambda Y\circ \theta_{\tau_k})\right], \overline{E}_0\left[\exp(\lambda X_{\tau_1}\cdot w)\right]\right\}\\
\label{est1}
&=\overline{E}_0\left[\exp(\lambda Y\circ
\theta_{\tau_k})\right]\leq 2 \overline E_0[\exp(\lambda Y)|D'=\infty].
\end{align}

\noindent
On the other hand, quit a similar procedure but now using the
complete statement of Proposition \ref{propare} along successive
conditioning, allows us to conclude that for $k\in [2,|l|_1 u/|l|_2 L]$
one has:
\begin{align}
\nonumber
\overline{E}_0[\exp(\lambda(X_{\tau_k}-X_{\tau_1})\cdot w )]=&\overline{E}_0[\exp(\lambda  \sum_{j=2}^k (X_{\tau_j}-X_{\tau_{j-1}})\cdot w)]\\
\nonumber
\leq&\left(\exp\left(e^{-g\, tL}\right)\overline{E}_0[\exp(\lambda  X_{\tau_1}\cdot w)| D'=\infty]\right)^{k-1}\\
\label{est2}
\stackrel{(\ref{boundfK})}\leq & \left(\exp\left(e^{-g\, tL}\right)\overline{E}_0[\exp(\lambda  X_{\tau_1}\cdot w)| D'=\infty]\right)^{\frac{|l|_1 u}{|l|_2 L}}.
\end{align}
Define now for $|\lambda| \leq \kappa^L c_{6}$, the function
\begin{equation*}
  H(\lambda):= \overline{E}_0[\exp\{\lambda X_{\tau_1}\cdot w \}|D'=\infty].
\end{equation*}

Taking $\lambda=\varrho u^{\frac{5}{4}(\gamma-1)}$ for a
positive constant $\varrho$ chosen so that
\begin{equation}\label{varrho}
\frac{c_6}{2}\,e^{-g\frac{|l|_1}{36}}<\varrho <c_6\,e^{-g\frac{|l|_1}{36}},
\end{equation}
holds, from the very definition of $L$ in (\ref{minimazerL}), we obtain
\begin{equation*}
 \varrho u^{\frac{\gamma -1}{8}}<\varrho e^{-\frac{g\,t(L-|l|_1)}{18}}\leq c_6 \,e^{-\log(1/\kappa)L}.
\end{equation*}
We observe that, for our choice of $\lambda$, $w\,\bot\,\hat{v}$,
Proposition \ref{expmpr} and Lebesgue's dominated convergence
theorem, one has:
\begin{align*}
H(\lambda)&=\overline{E}_0\left[1+\lambda X_{\tau_1}\cdot
w+\frac{\lambda^2}{2!}( X_{\tau_1}\cdot w)^2+\frac{\lambda^3}{3!}(
X_{\tau_1}\cdot w)^2+\ldots|D'=\infty \right]\\
&\leq \overline E_0[1+\varrho u^{\gamma-1+(\frac{\gamma -1}{4})}X_{\tau_1}\cdot w+ u^{2(\gamma -1)+(\frac{\gamma -1}{4})}\frac{(\varrho u^{\frac{\gamma -1}{8}}X_{\tau_1}\cdot w)^2}{2!}\\
&+ u^{3(\gamma-1)+(\frac{2(\gamma-1)}{4})}\frac{(\varrho u^{\frac{\gamma -1}{8}}X_{\tau_1}\cdot w)^3}{3!} +\ldots|D'=\infty]\\
 &\leq 1+c_7 u^{2(\gamma-1)+\left(\frac{\gamma -1}{4}\right)}\sum_{j=0}^\infty u^{j\left((\gamma-1)+\left(\frac{\gamma -1}{4}\right)\right)}\\
&\stackrel{\ref{constraintL3}}\leq  1 + 2c_7 u^{2(\gamma-1)+(\frac{\gamma -1}{4})}\leq e^{2c_7u^{2(\gamma-1)+\left(\frac{\gamma -1}{4}\right)}}.
\end{align*}
Consequently, once again since (\ref{minimazerL}) and
requirement (\ref{constraintL1}) we have that
\begin{align}
\nonumber
(\ref{est2})=&\left(\exp\left(e^{-g\,
tL}\right)\exp\left(\log(\overline{E}_0[\exp(\lambda
X_{\tau_1}\cdot w)| D'=\infty])\right) \right)^{\frac{|l|_1
u}{|l|_2 L}}\\
\label{extk}
\leq & \exp\left((2c_7+1)u^{2\gamma-2+\left(\frac{\gamma-1}{4}\right)}\times\left(\frac{|l|_1 u}{|l|_2 L}\right)\right).
\end{align}
Inserting estimates (\ref{est1}) and (\ref{extk}) into
(\ref{estimportant}) the assertion of the proposition follows since
assumption (\ref{constraintL2}).
\end{proof}

\begin{remark}\label{remfdre}
Let us sketch the proof for finite dependent random environments. Taking $L$ large enough with respect to the dependence of the environment we get to the rightmost expression in (\ref{est2}) without factor
$\exp\left(e^{-g\,tL}\right)$. Then, it is direct to see that i.i.d. renormalization techniques can be applied in this case without the help of assumption (\ref{Rgkappa}). Here, the crucial point is that there exists a finite $L$ such that $\tau_1^{(L)}$ is in fact a regeneration time.
\end{remark}

\section{Estimates for the Regeneration Time Tails}
\label{sectailest}

The main objective of this section will be to obtain an upper bound
for the probability $\overline{P}_0[\tau_1\geq u]$ when $u$ is
large and independent of $L$. Let $C,g>0$, throughout the complete section we shall assume
condition \textbf{(T)}$_\ell$, where $\ell\in \mathbb S^{d-1}$ satisfies (\ref{rul}), and either:
\textbf{(SM)}$_{C,g}$ or \textbf{(SMG)}$_{C,g}$. We first prove a basic lemma in
the spirit of \cite{Sz00}, Lemma 1.3. It is convenient to fix a
rotation $R$ on $\mathbb R^d$, with
$$
R(e_1)=\frac{l}{|l|_2}=\ell.
$$
Introducing for $M>0$, the hypercube
\begin{equation}
\label{cube}
C_M:=B_{M,\mathfrak{r} M, \ell}(0).
\end{equation}
We have
\begin{lemma}\label{basicexpest}
There exist $c_9>0$ and $L_0>0, \, L_0\in |l|_1\mathbb N$ such that
for any function $M:\mathbb R^+\rightarrow\mathbb R^+$, with
$\lim_{\substack{u\rightarrow\infty}} M(u)=\infty$ one has that for
large $u$,
\begin{equation*}
\overline{P}_0[\tau_1>u]\leq P_0[T_{C_{M(u)}}=T_{M(u)}^l>u]+e^{-c_9\kappa^L M(u)}
\end{equation*}
for each $L\in|l|_1\mathbb N, \, L\geq L_0$.
\end{lemma}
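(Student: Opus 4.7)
The plan is to follow the scheme of Sznitman's Lemma 1.3 in \cite{Sz00}: I split the event $\{\tau_1>u\}$ according to whether the walk has already reached $l$-level $M(u)$ by time $u$, and estimate each piece separately.

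First I would decompose
\[
\overline{P}_0[\tau_1>u] \;\leq\; \overline{P}_0\bigl[\tau_1>u,\,T^l_{M(u)}\leq u\bigr] \;+\; P_0\bigl[T^l_{M(u)}>u\bigr].
\]
For the first summand I would use that, on that event, $T^l_{M(u)}<\tau_1$. The construction $\tau_1=S_K$ forces the $\varepsilon$-sequence on $[\tau_1-L,\tau_1]$ to equal $\bar\varepsilon^{(L)}$, along which $n\mapsto X_n\cdot l$ is strictly increasing; together with the record property $X_{\tau_1-L}\cdot l>\max_{j<\tau_1-L}X_j\cdot l$ built into $S_K$, this forces the path maximum of $X_n\cdot l$ over $[0,\tau_1]$ to be attained at $n=\tau_1$. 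Hence $X_{\tau_1}\cdot l\geq X_{T^l_{M(u)}}\cdot l\geq M(u)$, and Chebyshev combined with the exponential-moment bound of Proposition \ref{expmpr} gives
\[
\overline{P}_0\bigl[\tau_1>u,\,T^l_{M(u)}\leq u\bigr]\;\leq\; c_3\exp\bigl(-c_2\kappa^L M(u)\bigr).
\]

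For the second summand I would split by the type of exit from the hypercube $C_{M(u)}$:
\[
P_0\bigl[T^l_{M(u)}>u\bigr]\;\leq\; P_0\bigl[T_{C_{M(u)}}=T^l_{M(u)}>u\bigr]\;+\;P_0\bigl[X_{T_{C_{M(u)}}}\notin \partial^+ C_{M(u)}\bigr],
\]
the last probability being bounded by $e^{-wM(u)}$ for some $w>0$ and all sufficiently large $u$ via item (iii) of Lemma \ref{T}.

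To conclude, I would pick $L_0$ large enough so that $\kappa^{L_0}<w/c_2$; then for every $L\geq L_0$ and every sufficiently large $u$ both error contributions $c_3 e^{-c_2\kappa^L M(u)}$ and $e^{-wM(u)}$ may be absorbed into a single $e^{-c_9\kappa^L M(u)}$ for any fixed $c_9\in(0,c_2)$, yielding the claim. The one delicate step is the path-geometric assertion $X_{\tau_1}\cdot l\geq M(u)$ on the first subevent: it rests entirely on the deterministic forward push imposed by $\bar\varepsilon^{(L)}$ on the terminal block $[\tau_1-L,\tau_1]$ together with the record property of $S_K$. Once that observation is in hand the rest of the proof is a short combination of Lemma \ref{T} and Proposition \ref{expmpr} with elementary inequalities.
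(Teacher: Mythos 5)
Your proof is correct and follows essentially the same route as the paper's: you decompose $\overline{P}_0[\tau_1>u]$ on the event $\{T^l_{M(u)}\leq u\}$, while the paper decomposes on $\{X_{\tau_1}\cdot l\leq |l|_2 M(u)\}$, but these are contrapositive formulations of the same inclusion that both rest on the record property $\tau_1 = T^l_{X_{\tau_1}\cdot l}$ (equivalently, $X_{\tau_1}\cdot l=\max_{0\le n\le\tau_1}X_n\cdot l$). The remaining ingredients — Chebyshev with Proposition \ref{expmpr}, the exit-type split of the stopping-time tail on $C_{M(u)}$ using item (iii) of Lemma \ref{T}, and the absorption of the two error terms into a single $e^{-c_9\kappa^L M(u)}$ by taking $L_0$ large — coincide with the paper's argument (which is merely more terse about the last absorption step).
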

\begin{proof}
Let us start with the inequality
\begin{align}
\nonumber
  \overline{P}_0[\tau_1>u]\leq \overline{P}_0[\tau_1>u, X_{\tau_1}\cdot l&\leq |l|_2 M(u)]+\overline{P}_0[X_{\tau_1}\cdot l> |l|_2 M(u)]  \\
  \nonumber
  \leq \overline{P}_0[\tau_1>u, X_{\tau_1}\cdot l\leq |l|_2 M(u)] &+e^{-c_1\kappa^L |l|_2 M(u)}\overline{E}_0\left[\exp\left(c_1\kappa^LX_{\tau_1}\cdot l\right)\right]\\
  \label{firstdecom}
  \leq \overline{P}_0[\tau_1>u,& X_{\tau_1}\cdot l\leq |l|_2 M(u)]+e^{-\frac{c_1|l|_2\kappa^L M(u)}{2}}.
\end{align}
It is then sufficient to estimate the probability
$$
\overline{P}_0[\tau_1>u, X_{\tau_1}\cdot l\leq |l|_2 M(u)].
$$
From the definition of time $\tau_1$, one has that
$\tau_1=T_{X_{\tau_1}\cdot l}^l$. Hence, we find that
\begin{equation*}
\overline{P}_0[\tau_1>u, X_{\tau_1}\cdot l\leq M(u)]\leq P_0[T_{|l|_2 M(u)}^l > u]\stackrel{(\ref{rightst})}=P_0[T_{M(u)}^{\ell}>u].
\end{equation*}
We first proceed to consider the following decomposition
inequality
\begin{equation*}
P_0[T_{M(u)}^{\ell} > u]\leq P_0[T_{C_{M(u)}}=T_{M(u)}^{\ell} >
u]+P_0[T_{C_{M(u)}}<T_{M(u)}^{\ell}],
\end{equation*}
for large $u$. Since \textbf{(T)}$_\ell$ holds, (see (\ref{generalboxes}) and Lemma \ref{T})
\begin{align}
\nonumber
P_0[T_{C_{M(u)}}<T_{M(u)}^{l'}]&\leq P_0\left[X_{T_{C_{M(u)}}}\notin \partial^+C_{M(u)}\right]\\
\label{ineqkali}
&\leq \exp\left(-\widetilde{c}M(u)\right),
\end{align}
for a suitable constant $\widetilde{c}>0$.

\noindent
Thus, coming back to (\ref{firstdecom}) the required assertion follows from
(\ref{ineqkali}).
\end{proof}
In the next subsection we will present an atypical
quenched estimate for mixing environments in the spirit of \cite{Sz00}, Proposition 3.1.

\subsection{Renormalization}
The main objective here is to establish a version
of an atypical quenched estimate for mixing random environments in the spirit of Proposition 3.1 in \cite{Sz00} for i.i.d. environments. To this purpose, we first introduce the
set
\begin{equation*}
  U_M=\left\{y\in \mathbb Z^d: \left|y\cdot \ell\right|<M\right\}
\end{equation*}
for $M>0$. The crucial ingredient to bound from above the tail of
$\tau_1$ is given below.
\begin{proposition}
\label{atyquenchedestimate}
For $\beta\in [0,1)$ and $c>0$
\begin{equation}
\label{aqe}
\limsup_{\substack{M\rightarrow\infty}} M^{-\chi}\log\mathbb P\left[P_{0,\omega}\left[X_{T_{U_M}}\cdot \frac{l}{|l|_2}\geq M\right]\leq e^{-cM^\beta}\right]<0,
\end{equation}
\begin{equation}\label{conaqe}
\mbox{where either $\chi=1$ or $\chi<d(\frac{13}{4}\beta-\frac{9}{4})$.}
\end{equation}
\end{proposition}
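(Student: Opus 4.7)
The plan is to follow the renormalization strategy of Sznitman (\cite{Sz00}, Proposition 3.1) for i.i.d.\ environments, adapted to the mixing setting through the approximate renewal structure of Section \ref{section2} and, crucially, the transverse fluctuation control of Proposition \ref{asymtrafluc}. The goal is to prove the estimate by induction over a sequence of scales $L_0 < L_1 < L_2 < \cdots$ with $L_{n+1}$ a large (but subexponential) multiple of $L_n$.

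First I would, for each $n$ and each $y \in \mathbb Z^d$, define the bad environment event
\[
B_n(y) := \left\{\omega : P_{y,\omega}\bigl[X_{T_{U_{L_n}(y)}} \cdot \ell \geq y \cdot \ell + L_n\bigr] \leq e^{-c L_n^\beta}\right\},
\]
where $U_{L_n}(y)$ is the translated slab, and aim to prove $\mathbb P[B_n(0)] \leq \exp(-\tilde c L_n^\chi)$ inductively. Given this bound at scale $L_n$, the induction step uses three ingredients: (i) Proposition \ref{asymtrafluc} forces the walker crossing $U_{L_{n+1}}$ to remain in a tube of transverse width $L_{n+1}^{\gamma}$ for some $\gamma \in (5/9,1)$, except on an event of annealed probability $\exp(-c_8 L_{n+1}^{\frac{9}{4}\gamma - \frac{5}{4}})$; (ii) inside this tube the walker must cross roughly $\bar\ell_n := L_{n+1}/L_n$ consecutive sub-slabs of scale $L_n$, and so an atypically small quenched exit probability at scale $L_{n+1}$ forces the quenched exit probability in a positive fraction of appropriately chosen intermediate sub-slabs to be atypically small too; (iii) the mixing assumption, combined with the geometric separation of the sub-slabs and the approximate renewal structure of Proposition \ref{propare} / Corollary \ref{corren}, lets us estimate the joint probability of "many bad sub-boxes'' by approximately the product of their individual probabilities, up to multiplicative errors of the form $\exp(C e^{-g L_n})$.

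The resulting recursion would have the schematic form
\[
\mathbb P[B_{n+1}(0)] \leq \binom{N_n}{k_n}\, \sup_{y} \mathbb P[B_n(y)]^{k_n}\, \exp\!\bigl(C e^{-g L_n}\bigr) + \exp\!\bigl(-c L_{n+1}^{\frac{9}{4}\gamma - \frac{5}{4}}\bigr),
\]
where $N_n$ counts the sub-box centers available inside the tube (a polynomial in $L_{n+1}^{\gamma(d-1)}$ times $\bar\ell_n$) and $k_n$ is a positive fraction of $\bar\ell_n$. Iterating, the geometric factor $k_n^{\,n}$ drives exponential decay of the exponent. The two regimes in \eqref{conaqe} correspond respectively to the case where the tube-exit term on the right-hand side dominates (then one must optimize over $\gamma$, balancing the exponent $\frac{9}{4}\gamma - \frac{5}{4}$ from Proposition \ref{asymtrafluc} against the combinatorial entropy $(d-1)\gamma$ of sub-box positions in the tube, which together with $\beta$ yields the threshold $d(\frac{13}{4}\beta - \frac{9}{4})$), and the case where the geometric renormalization term dominates (giving linear-in-$L$ rate $\chi = 1$).

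The main obstacle will be step (iii): controlling "approximate independence'' of the events $B_n(y)$ for distinct $y$ under only strong mixing, not i.i.d. Concretely, bounding $\mathbb P[\bigcap_i B_n(y_i)]$ by an essentially product form requires exploiting the exponential decay in \eqref{sma} or \eqref{smg}, and this decay must beat the $\kappa^{L_n}$-type ellipticity losses that appear when one compares a quenched exit probability to its annealed counterpart through successive conditionings. This is precisely where assumption \eqref{Rgkappa} enters: $g > 18 \log(1/\kappa)$ is calibrated so that the mixing correction $\exp(Ce^{-gL_n})$ overwhelms both the $\kappa^{L_n}$ factors inherited from the ellipticity-based bounds used in Propositions \ref{expmpr}--\ref{asymtrafluc}, and the $\mu(L) = \exp(e^{-gtL})$ corrections of Proposition \ref{propare}, so that the iterated product structure actually gives decay rather than growth.
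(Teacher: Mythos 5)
Your high-level pointers are in the right neighborhood (renormalization via good and bad boxes, the transverse fluctuation control of Proposition \ref{asymtrafluc}, mixing to approximately decouple boxes, and assumption (\ref{Rgkappa}) as the calibration ensuring the argument closes), but the scheme you propose is structurally different from the paper's and, as written, does not close. The paper performs a \emph{single-step} renormalization, not an induction over scales. Given the target scale $M$, one fixes a microscopic box side $M_0 = \rho_1 M^{\chi}$ with $\chi = (1-\beta)/(1-\gamma)$, a tube width $M_1 \sim M_0 M^{\beta-\chi}$, and a threshold $N_0 \sim M^{\beta-\chi}$; declares a site $z\in M_0\mathbb Z^d$ bad when the quenched probability of exiting the enlarged box $\tilde B_2(z)$ through its positive face drops below $1/2$; and introduces the key random variable $n(z,\omega)$, the minimal number of bad boxes along a column inside the tube attached to $z$. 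Lemma \ref{quexplowbound} then converts $n(0,\omega)\le N_0$ into a quenched lower bound $(2\kappa)^{c(M_1+JM_0^\gamma+N_0 M_0)}(1/2)^{J+1}$ for reaching the top of the tube, which under (\ref{con1})--(\ref{con3}) dominates $e^{-cM^\beta}$. The whole weight of the argument is on bounding $\mathbb P[n(0,\omega)>N_0]$: after a parity split and Cauchy--Schwarz, boxes in distinct columns sit at $\ell_1$-distance $\ge M_0-2M_0^\gamma$, so successive conditioning with (\ref{sma}) or (\ref{smg}) costs only a factor $\exp(Ca)$ with $Ca\le e^{-gM_0/2}$, and Chebyshev along each column gives $\mathbb P[n(0,\omega)>N_0]\le \exp(-\tfrac{N_0}{2}[M_1/(2M_0)]^{d-1})\sim \exp(-c M^{d(\beta-\chi)})$. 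Letting $\gamma$ vary in $(5/9,1)$ sweeps the exponent $d(\beta-\chi)$ up to the claimed threshold $d(\tfrac{13}{4}\beta-\tfrac{9}{4})$, recovering (\ref{conaqe}).

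Concretely, your proposal has two genuine gaps. First, the multi-scale recursion
\[
\mathbb P[B_{n+1}(0)] \le \binom{N_n}{k_n}\,\sup_y \mathbb P[B_n(y)]^{k_n}\,\exp(Ce^{-gL_n}) + \exp(-cL_{n+1}^{\frac94\gamma-\frac54})
\]
is schematic: you never show it closes, never supply a base case, and never reconcile the binomial entropy $\sim N_n H(k_n/N_n)$ with the gain $k_n \log(1/\mathbb P[B_n])$ — whereas the paper needs no such iteration, because the choice $M_0\sim M^\chi$ already links the two scales in one step. The tube/column combinatorics, and in particular the random variable $n(z,\omega)$ controlling the uniform-ellipticity cost of crossing a column, are the heart of the proof and are missing from your sketch. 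Second, you misattribute the role of assumption (\ref{Rgkappa}). It is not used to damp the mixing corrections $\exp(Ce^{-gL_n})$ against $\kappa^{L_n}$ factors inside an iterated product — the mixing error $\exp(Ca)$ in this proposition needs nothing beyond $g>0$. Rather, $g>18\log(1/\kappa)$ enters \emph{inside} Proposition \ref{asymtrafluc}: it lets one pick $t\in(1/2,1)$ with $gt>18\log(1/\kappa)$ so that the least $L$ satisfying $e^{-gtL}\le u^{2\gamma-2+(\gamma-1)/4}$ keeps the Chernoff parameter $\lambda=\varrho u^{\frac54(\gamma-1)}$ inside the admissible domain $[0,c_6\kappa^L]$. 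Proposition \ref{asymtrafluc} is then invoked as a black box in the proof of Lemma \ref{controlbadbox}, which is where the $\frac94\gamma-\frac54$ exponent — and hence the restriction $\gamma\in(5/9,1)$ and the numbers $13/4$ and $9/4$ — actually comes from.
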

\begin{proof}
By a quit similar argument of \cite{Sz00}, page 121, the case
$\chi=1$ easily follows from condition \textbf{(T)}$_{\ell}$. We thus
only need to consider the case when $\beta\in[0,1)$ is large enough
such that
\begin{equation}\label{betacon}
d\left(\frac{13}{4}\beta-\frac{9}{4}\right)>1.
\end{equation}
The key idea of the proof (cf. \cite{Sz00}) is to construct strategies
for the walk ensuring that this starting from $0\in \mathbb Z^d$,
escapes from $U_M$ by the boundary side $\partial^+U_M:=\partial
U_M \cap \{z\in \mathbb R^d: z\cdot l/|l|_2\geq M\}$. Such a
construction involves the notion of \textit{good} and \textit{bad}
boxes for the environment, and they will provide high probability
on the event that the walk fulfills the required strategies.

\noindent
In order to introduce the definitions of good and bad boxes, we
need some further notation. For $L\geq L_0$ with $L\in |l|_1
\mathbb N$, we pick a rotation $\tilde{R}_L$ on $\mathbb R^d$ so
that
\begin{equation*}
  \tilde{R}_L(e_1)=\hat{v}_L \,\,\mbox{ (we shall only write $\tilde R$, beacuse we will fix $L$ below). }
\end{equation*}

\noindent
We consider $\gamma\in(5/9,1)$ and $t\in (1/2,1)\cap \mathbb Q$ so
that
$$
tg>18\log\left(\frac{1}{\kappa}\right).
$$
Pick then
$M_0>2\sqrt{d}$ large enough, such that if $L$ is the integer
satisfying $$ L=\min\{\widehat{L}\in |l|_1 \mathbb N:
e^{-gt\widehat{L}}\leq M_0^{2\gamma -2 +\left(\frac{\gamma
-1}{4}\right)}\}, $$ one has that $L\geq L_0$ and $L\geq
\frac{48|l|_1}{\hat{v}\cdot l}$ (which is possible by
Proposition \ref{remark}).

\vspace{1ex}
\noindent
Define for $z\in M_0\,\mathbb Z^d$ ($M_0$ as above), the following
blocks:
\begin{align}
\nonumber
\tilde{B}_1(z)&:=\tilde{R}\left(z+\left(0, M_0\right)^d\right)\cap \mathbb Z^d\\
\label{blocks}
\tilde{B}_2(z)&:=\tilde{R}\left(z+\left(-M_0^\gamma, M_0+M_0^\gamma\right)^d\right)\cap \mathbb Z^d,
\end{align}
which are nonempty because $M_0>2\sqrt{d}$. One also defines the
\textit{boundary positive part of} $\tilde{B}_2(z)$ via
\begin{equation}
\label{posboun2}
\partial^+\tilde{B}_2(z):=\partial \tilde{B}_2(z)\cap\{y: (y-z)\cdot \tilde{R}(e_1)\geq M_0+M_0^\gamma\}.
\end{equation}
We then say that site $z\in M_0\mathbb Z^d$ is $M_0$-good, if
\begin{equation}\label{bgood}
\sup_{x\in \tilde{B}_1(z)}P_{x,\omega}\left[X_{T_{\tilde{B}_2(z)}}\in \partial^+\tilde{B}_2(z)\right]\geq\frac{1}{2},
\end{equation}
and $M_0$-bad otherwise. We have the following upper bound for
$M_0$-bad blocks:
\begin{lemma}
\label{controlbadbox}
Let $\gamma \in (5/9,1)$. Then, one has that
\begin{equation}
\label{estbb}
\limsup_{\substack{M_0\rightarrow\infty}}M_0^{5/4-(9/4\gamma)}\sup_{z\in M_0 \mathbb Z^d}\log \mathbb P[z\mbox{ is $M_0$-bad}]<0.
\end{equation}
\end{lemma}
\begin{proof}
For $z\in M_0\, \mathbb Z ^d$,

\begin{align}
  \nonumber
  \mathbb P [z\, \mbox{ is $M_0$-bad}]&=\mathbb P\left[\sup_{x\in \tilde{B}_1(z)}P_{x,\omega}\left[X_{T_{\tilde{B}_2(z)}}\notin \partial^+\tilde{B}_2(z)\right]>\frac{1}{2}\right] \\
  \label{eslecbad}
  &\leq 2 \left|\tilde{B}_1(z)\right|\sup_{x\in \tilde{B}_1(z)}P_x\left[X_{T_{\tilde{B}_2(z)}}\notin \partial^+\tilde{B}_2(z)\right].
\end{align}
Observe that for $x\in \tilde{B}_1(z)$, one has that
$\tilde{B}_2(z)$ is included in the closed Euclidean ball centered
at $x$ of radius $3\sqrt{d}M_0$. Therefore, recalling that
$\ell=l/|l|_2$ (cf. (\ref{rul})) one gets $P_x$-a.s.
\begin{equation*}
T_{\tilde{B_2}(z)}\leq T_{x\cdot l' +3\sqrt{d}M_0}^{l'}.
\end{equation*}
On the other hand, $P_x$-a.s. on the event
$\{X_{T_{\tilde{B}_2(z)}}\notin \partial^+\tilde{B}_2(z)\}$, one
has
\begin{equation*}
  \mbox{either  }(X_{T_{\tilde{B}_2(z)}}-x)\cdot \hat{v}\leq - \frac{M_0^\gamma}{2} \,\, \mbox{  or  } \left|\Pi_{\hat v}\left(X_{T_{\tilde{B}_2(z)}}-x\right)\right|_2\geq \frac{M_0^\gamma}{2},
\end{equation*}
where the notation is as in Proposition \ref{asymtrafluc}. As a
result, one gets
\begin{align}
\nonumber
\mathbb{P}\left[z \mbox{ is $M_0-$bad}\right]\leq c(d) M_0^d\Big(P_0\bigg[\sup_{0\leq n\leq T_{3\sqrt{d}M_0}^{\ell}}&|\Pi_{\hat{v}}(X_n)|_2\geq \frac{\hat{v}\cdot \ell}{4}M_0^\gamma\bigg]\\
\nonumber
+P_0\bigg[\sup_{0\leq n\leq T_{3\sqrt{d}M_0}^{\ell}}|\Pi_{\hat{v}}(X_n)|_2< \frac{\hat{v}\cdot \ell}{4}M_0^\gamma, \inf_{0\leq n \leq T_{3\sqrt{d}M_0}^{\ell}}&X_n\cdot \hat{v}\leq -\frac{M_0^\gamma}{2}\bigg]\Big)\\
\label{lemmabad}
\leq c(d)M_0^d\Big(P_0\bigg[\sup_{0\leq n\leq T_{3\sqrt{d}M_0}^{\ell}}|\Pi_{\hat{v}}(X_n)|_2\geq \frac{\hat{v}\cdot \ell}{4}M_0^\gamma\bigg]&+P_0\bigg[\widetilde{T}_{-\frac{M_0^\gamma \hat{v}\cdot \ell}{4}}^{\ell}<\infty\bigg]\Big),
\end{align}
where we used the inequality $X_n\cdot \ell \leq (X_n\cdot
\hat{v})\hat{v}\cdot \ell+\left|\Pi_{\hat{v}}(X_n)\right|_2$ to
obtain the rightmost term in the last line of (\ref{lemmabad}).
The claim follows now from Proposition \ref{asymtrafluc} and condition \textbf{(T)}$_\ell$.
\end{proof}
The general procedure is now to consider columns, constructed by
joining together boxes in direction $\hat{v}$. One then gathers
columns to form tubes. We next make precise the terms "column" and "tube"
by some further definitions. For $M>0$ and $M_0$ as above
(the relation between $M$ and $M_0$ will appear in
(\ref{choisedimen})), we attach to each $z\in M_0 \,\mathbb Z^d$,
the column
\begin{align}
\nonumber
Col(z)&=\left\{z'\in M_0 \mathbb Z^d:\, \exists j \in [0,J],
\,z'=z+jM_0 e_1\right\}, \,\,\mbox{ where}\\
\label{col}
&J \, \, \mbox{ is the smallest integer such that }\,JM_0
\hat{v}\cdot \frac{l}{|l|_2}\geq 3 M.
\end{align}
We choose $M_1>0$ an integer multiple of $M_0$ and define the tube
attached to $z\in M_0\mathbb Z^d$ by:
\begin{gather}
\nonumber
Tube(z)=\\
\label{tube}
\left\{z'\in M_0 \mathbb Z^d:\,\exists j_1,j_2\ldots,
j_d\in \left[0, \frac{M_1}{M_0}\right], \, z'=z+\sum_{i=2}^d j_iM_0
e_i \right\}.
\end{gather}
We stress that the key idea behind these definitions is the
following strategy: one way for the walk to escape from slab $U_M$
is to move to one of the \textit{bottom blocks} in $Tube(0)$ of an
appropriate column containing the greatest amount of \text{good
blocks} and then move along this column up to its top. Under the
choices that we will do later on, we will ensure that the walk
escapes from $U_M$ by the boundary side $\partial ^+ U_M$, see Figure \ref{fig2}.
\begin{figure}
  \centering
  \includegraphics[width=7cm]{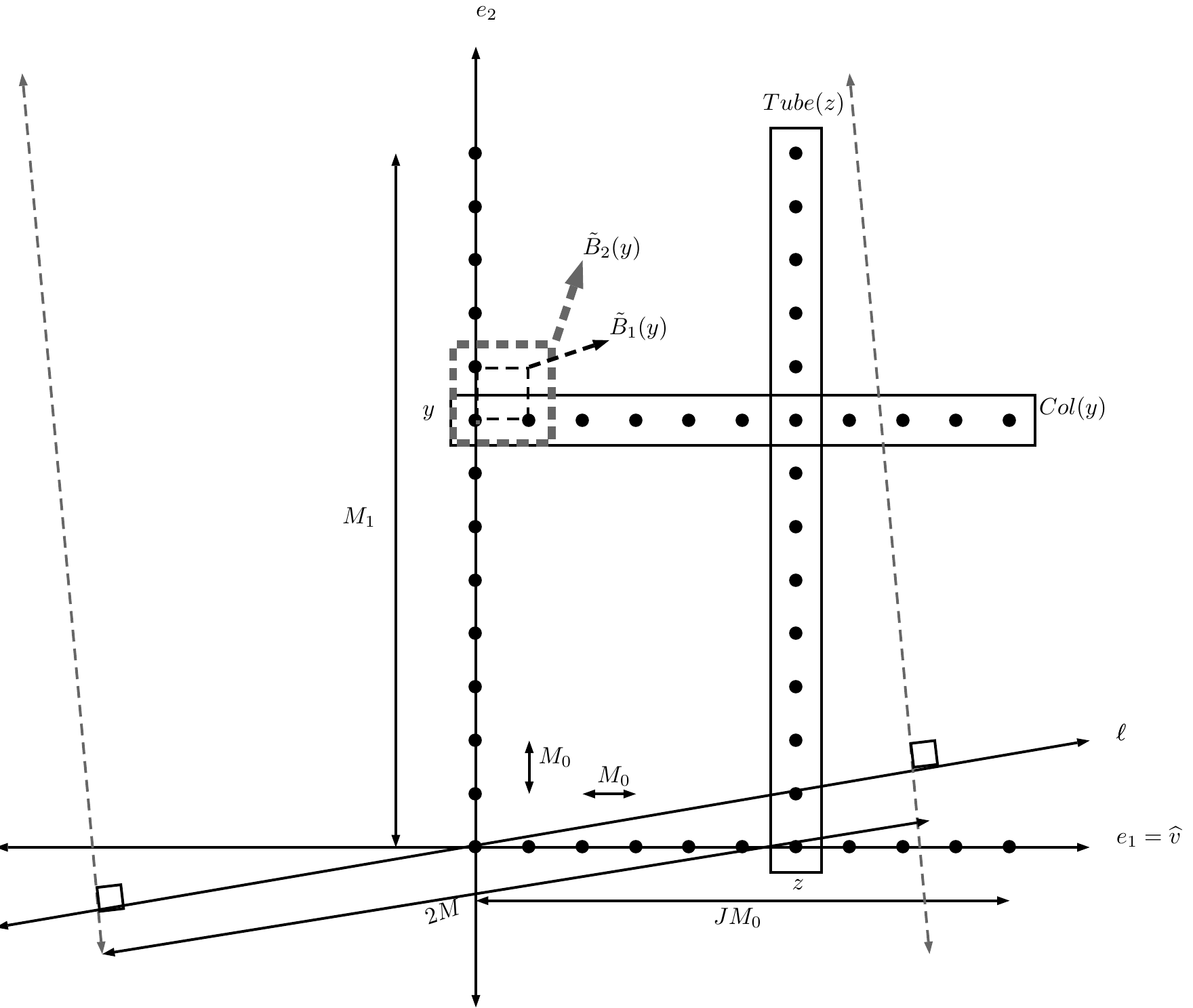}
  \caption{Schematic representation of definitions (\ref{col}), (\ref{tube}) and strategy to escape of $U_M$ by $\partial ^+ U_M$. Dashed lines delimitate set $U_M$ and if $y$ is a point in $Tube(0)$ with minimal amount of bad boxes on its column, the strategy is get to $y$ from $0$ and then use column on $y$ to escape by $\partial ^+ U_M$. The name columns comes from rotate counter clockwise this figure by a $\pi/2$ angle.}\label{fig2}
\end{figure}
\noindent
It will be convenient to introduce for $z\in M_0 \mathbb Z^d$, the
\textit{top of a tube }as:
\begin{equation}
\label{top}
Top(z)=\bigcup_{z'\in Tube(z)}\partial^+ \tilde{B}_2(z'+J M_0 e_1),
\end{equation}
along with the \textit{neighborhood of a tube} as:
\begin{gather}
\nonumber
V(z)=\\
\label{neitube}
\left\{x\in \mathbb Z^d:\, \exists y\in \bigcup_{\substack{z'
\in\, Tube(z),\\0\leq j\leq J }}\tilde{B}_1(z'+j M_0 e_1),\,
|x-y|_1\leq 3d M_1\right\}.
\end{gather}
We need a lower bound on the $P_{0,\omega}$-probability
for the event of reaching the top of a tube attached to the site
$0\in\mathbb Z^d$, before the walk exits from $V(0)$. To this end,
we introduce the random minimum number of $M_0-$bad boxes contained
in a column among columns in a tube as:
\begin{equation}\label{minbad}
n(z,\omega)=\min_{z'\in \, Tube(z)}\left\{\sum_{j=0}^J \,
\mathds{1}\textsubscript{$\{z'+M_0 j e_1$ is $M_0-$bad
$\}$}\right\}.
\end{equation}
Recalling our choice of $\kappa$ provided in (\ref{simplex}), the proof of Lemma 3.3 in \cite{Sz00} allows us to establish the next
\begin{lemma}
\label{quexplowbound}
There exists $c_{10}>0$ such that for any $z\in M_0\mathbb Z^d$ and
any
\begin{equation*}
  x\in \bigcup_{\substack{z' \in\, Tube(z),\\0\leq j\leq J }}\tilde{B}_1(z'+j M_0 e_1):=D(z),
\end{equation*}
one has
\begin{equation}\label{questimlow}
P_{x,\omega}\left[H_{Top(z)}<T_{V(z)}\right]\geq
(2\kappa)^{c_{14}\left(M_1+JM_0^\gamma+n(z,\omega)M_0\right)}\left(\frac{1}{2}\right)^{J+1}.
\end{equation}
\end{lemma}
In virtue of Lemma \ref{controlbadbox}, we now choose $\gamma\in
(5/9,1)$, such that:
\begin{equation}\label{chi}
\chi:=\frac{1-\beta}{1-\gamma}<\beta <1,
\end{equation}
and notice that such a choice is possible in view of assumption
(\ref{betacon}). We then choose
\begin{equation}\label{nu}
\nu>1-\gamma,
\end{equation}
and introduce for large $M$:
\begin{equation}\label{choisedimen}
M_0=\rho_1 M^\chi,\, M_1=\left[\rho_2M^{\beta-\chi}\right]M_0,\,
N_0=\left[\rho_3 M^{\beta-\chi}\right],
\end{equation}
where the constants $\rho_1,\,\rho_2,\, \rho_3 $, possibly depend
on $\kappa, |l|_2,\, |\hat{v}|_2, \,d,\, r, \, \delta$ and $c$
(cf.(\ref{aqe})). They are chosen so that for large $M$, the
following requirements:
\begin{align}\label{con1}
   &\min\left\{(2\kappa)^{c_{10}JM_0^\gamma}, \, (2\kappa)^{c_{10}M_1}, \, (2\kappa)^{c_{10}N_0M_0}, \,\left(\frac{1}{2}\right)^{J+1}\right\}\\
   \nonumber
   &>\exp\left(-\frac{c}{5}M^\beta\right),\\
  \label{con2}
&\frac{N_0}{3}>(J+1)\frac{(e^2-1)}{M_0^\nu}, \, \, \mbox{  and}\\
  \label{con3}
  &\mbox{any nearest neighbor path within $V(0)$, between $0$ and $Top(0)$,} \\
  \nonumber
  &\mbox{first exits $U_M$ through $\partial^+ U_M$.}
\end{align}
are satisfied.

\noindent
To see that such a choice is possible observe that it suffices to
take $\rho_1$ large enough and $\rho_2=\rho_3=c(10 \rho_1
c_{10}\log(1/(2\kappa)))^{-1}$, then (\ref{con1}) and (\ref{con3})
are satisfied for large $M$. As for (\ref{con2}), when $\beta < 1$,
it follows from the equality: $\beta -\chi=1-(1+\nu)\chi$.
\medskip

\noindent
Note that as a remark, the sites over which the environment events
$\{z:$ is $M_0$-good$\}$ depend, where $z$ runs over the
collection $(k_1M_0, \ldots, k_d M_0)$, with $k_i, \, i\in[1,d]$
non-negative integers, and $k_1+\ldots+k_d$ has a fixed parity;
they are at least a $|\,\cdot\,|_1$-distance of $M_0-2M_0^\gamma$
separated. Keeping this in mind, an application of
Bunyakovsky-Cauchy-Schwarz inequality gives
\begin{align}
\nonumber
  \mathbb P\left[n(0,\omega)>N_0\right]&=\mathbb P\left[\bigcap_{z'\in\, Tube(0)}\left\{\sum_{j=0}^{J}\mathds{1}\textsubscript{$\{z'+M_0 j e_1$    \,\,\mbox{ is  } \,\,$M_0-$bad $\}$}>N_0\right\}\right] \\
  \nonumber
  &\leq \mathbb P\left[\bigcap_{\substack{z'\in\, Tube(0)\\k_2+\ldots +k_d\,\, even}}\left\{\sum_{j=0}^{J}\mathds{1}\textsubscript{$\{z'+M_0 j e_1$    is $M_0-$bad $\}$}>N_0\right\}\right]^{\frac{1}{2}}\\
  \label{estnumbadbox}
  &\times\mathbb P\left[\bigcap_{\substack{z'\in\, Tube(0)\\k_2+\ldots +k_d\,\, odd}}\left\{\sum_{j=0}^{J}\mathds{1}\textsubscript{$\{z'+M_0 j e_1$    is $M_0-$bad $\}$}>N_0\right\}\right]^{\frac{1}{2}}.
\end{align}
Using the previous remark for the last two expressions on the rightmost
hand of (\ref{estnumbadbox}) along successive conditioning to
apply the mixing conditions (\ref{sma}) or (\ref{smg}), one gets:
\begin{align}
\label{fiestitenumbad}\mathbb P\left[n(0,\omega)>N_0\right] &\\
 \nonumber
  \leq \exp(Ca)\Big(\sup_{z'\in \, Tube(0)}\mathbb P\bigg[\sum_{j=0}^{J}\mathds{1}& \textsubscript{$\{z'+M_0 j e_1$    is $M_0-$bad $\}$}>N_0\bigg]\Big)^{\left[\frac{M_1}{2M_0}\right]^{d-1}}.
\end{align}
with the notation:
\begin{equation*}
  a=\sum_{x\in H , y \in T}e^{-g|x-y|_1},
\end{equation*}
where in turn for the mixing condition \textbf{(SM)}$_{C,g}$
(cf.(\ref{sma})), $H$, $T$ denote the sets:
\begin{align*}
H&=\partial^r\bigg\{y\in \mathbb Z^d:\, y\in \tilde{B}_2(z), \,\,
\mbox{ for some }\,\, z\in \tilde{B}_1\big(M_0 j
e_1+\Sigma_{\substack{2\leq i\leq d}}k_ie_i\big), \\
 &k_i\in\left[0,\frac{M_1}{M_0}\right],\,\,(k_2+\ldots+ k_d)-(d-1)\frac{M_1}{M_2}=1\,\, (mod\, 2),\,\,j\in [0,J]\bigg\}\\
T&=\partial^r\bigg\{x\in \mathbb Z^d:\,x\in \tilde{B}_2(z), \,\,
\mbox{ for some }\,\, z\in \tilde{B}_1\big(M_0 j
e_1+\Sigma_{\substack{2\leq i\leq d}}\frac{M_1}{M_0}e_i\big),
\\
&j\in [0,J] \bigg\},
\end{align*}
and for the mixing condition \textbf{(SMG)}$_{C,g}$ (cf.
\ref{smg}), the sets $H$ and $T$ will be switched to
\begin{align*}
H&=\bigg\{y\in \mathbb Z^d:\, y\in \tilde{B}_2(z), \,\, \mbox{ for
some }\,\, z\in \tilde{B}_1\big(M_0 j e_1+\Sigma_{\substack{2\leq
i\leq d}}k_ie_i\big), \\
 &k_i\in\big[0,\frac{M_1}{M_0}\big],\,\,(k_2+\ldots+ k_d)-(d-1)\frac{M_1}{M_2}=1\,\, (mod\, 2),\,\,j\in [0,J]\bigg\}\\
T&=\bigg\{x\in \mathbb Z^d:\,x\in \tilde{B}_2(z), \,\, \mbox{ for
some }\,\, z\in \tilde{B}_1\big(M_0 j e_1+\Sigma_{\substack{2\leq
i\leq d}}\frac{M_1}{M_0}e_i\big), \\
 &j\in [0,J] \bigg\}.
\end{align*}

\noindent
By means of a similar argument as the one of Lemma \ref{lemmaac},
one sees that for large $M_0$:
\begin{equation*}
  Ca\leq \exp\left(-\frac{gM_0}{2}\right),
\end{equation*}
and consequently
\begin{align}
\label{seestinumbad}\mathbb P\left[n(0,\omega)>N_0\right] &\\
 \nonumber
  \leq \exp(e^{-\frac{gM_0}{4}})\Big(\sup_{z'\in \, Tube(0)}\mathbb P\bigg[\sum_{j=0}^{J}\mathds{1}& \textsubscript{$\{z'+M_0 j e_1$    is $M_0-$bad $\}$}>N_0\bigg]\Big)^{\left[\frac{M_1}{2M_0}\right]^{d-1}}.
\end{align}
Let us now observe that arguing as in \cite{Sz00}, page 125, when
$Z$ is a Bernoulli random variable taking values onto $\{0,1\}$, with
success probability smaller than $M_0^{-\nu}$, then
$E[\exp(2Z)]\leq1+(e^2-1)/M_0^\nu$. As a result, restricting $j$ to
even or odd integers, we conclude from Chebyshev's inequality with
the help of: Lemma \ref{lemmabad}, successive conditioning, the
mixing conditions \textbf{(SM)}$_{C,g}$ or \textbf{(SMG)}$_{C,g}$ and the choice of
$\nu$ in (\ref{nu}), for large $M$
\begin{align}
\nonumber
&\sup_{z'\in \, Tube(0)}\mathbb P\left[\sum_{j=0}^{J}\mathds{1}\textsubscript{$\{z'+M_0 j e_1$    is $M_0-$bad $\}$}>N_0\right]\\
\nonumber
&\leq 2\exp\left(e^{-\frac{gM_0}{4}}\right)\exp\left(-N_0\right)\left(1+\frac{e^2-1}{M_0^\nu}\right)^{J+1}\\
\label{estatypqest}
&\leq 4\exp\left(-N_0+(J+1)\frac{e^2-1}{M_0^\nu}\right)\stackrel{(\ref{con2})}\leq \frac{1}{2}\,\exp\left(-\frac{N_0}{2}\right),
\end{align}
where we have assumed in turn that $M$ is large enough so that
\begin{equation*}
\exp\left(e^{-\frac{gM_0}{4}}\right)\leq 2.
\end{equation*}
Therefore, for large $M$
\begin{equation}\label{finalestatqe}
\mathbb P\left[n(0,\omega)>N_0\right]\leq \exp\left(-\frac{N_0}{2}\, \left[\frac{M_1}{2M_0}\right]^{d-1}\right).
\end{equation}
On the other hand, we have that on the event $\{n(0,\omega)\leq
N_0\}$:
\begin{equation*}
  P_{0,\omega}\left[X_{T_{U_M}}\cdot \frac{l}{|l|_2}\geq M\right]\stackrel{(\ref{con3})}\geq P_{0,\omega}\left[H_{Top(0)}<T_{V(0)}\right]\stackrel{(\ref{questimlow})-(\ref{con1})}>e^{-cM^\beta}.
\end{equation*}
Thus, one gets
\begin{equation*}
  \limsup_{M}\, M^{d(\beta-\chi)}\,\log \mathbb P\left[P_{0,\omega}\left[X_{T_{U_M}}\cdot \frac{l}{|l|_2}\geq M\right]\leq e^{-cM^\beta}\right]<0,
\end{equation*}
and the estimate (\ref{aqe}) follows by letting $\gamma$ vary
according to (\ref{chi}).
\end{proof}

\subsection{Proof of Theorem \ref{mth3}}
We now proceed with the proof of Theorem \ref{mth3}. The rough plan is to bound tails of the time $\tau_1$ and then we will apply Theorem 2 of \cite{CZ02}.
We begin with applying the previous atypical quenched estimate to obtain
controls on the tails of the approximate regeneration times
$\tau_1^{(L)}$. The precise statement will be the content of the
following:
\begin{proposition}
\label{tailestimate}
There exist constants $c_{11},\, c_{12}>0$ and $L_0\in |l|_1\,\mathbb N$,
so that for each $L\in |l|_1\, \mathbb N$ with $L\geq L_0$ and for
all $\alpha<1+\frac{4(d-1)}{13d+4}$:
\begin{equation}\label{taiest}
  P_0\left[\tau_1^{(L)}>u\right] \leq e^{-c_1\kappa^{L}(\log(u))^\alpha}+e^{-c_2(\log(u))^\alpha}.
\end{equation}
\end{proposition}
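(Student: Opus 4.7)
The proof combines Lemma \ref{basicexpest} with the atypical quenched estimate of Proposition \ref{atyquenchedestimate}. Fix $\alpha$ in the stated range, set $M=M(u)=C(\log u)^{\alpha}$ with $C$ large, and apply Lemma \ref{basicexpest}; this yields
\[
\overline{P}_{0}[\tau_{1}>u] \leq P_{0}[T_{C_{M}}=T_{M}^{l}>u]+e^{-c_{9}\kappa^{L} M(u)},
\]
and the last summand already produces the first term $e^{-c_{11}\kappa^{L}(\log u)^{\alpha}}$ in the claimed bound. Condition \textbf{(T)}$_{\ell}$ further takes care of the event $\{X_{T_{C_M}}\notin\partial^{+}C_M\}$ with a stretched-exponential decay in $M$, so it suffices to control $\mathbb{E}[P_{0,\omega}[T_{C_M}>u]]$.

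To estimate this quantity, I split on a renormalised environment event. Call $\omega$ \emph{good at} $x$ whenever $P_{x,\omega}[X_{T_{U_{M}(x)}}\cdot\ell\geq x\cdot\ell+M]>e^{-cM^{\beta}}$, where $U_{M}(x)=x+U_{M}$, and let $\mathcal{A}$ be the event that $\omega$ is good at every $x\in C_{M}$. By translation-invariance of $\mathbb{P}$, Proposition \ref{atyquenchedestimate} and a union bound over at most $|C_{M}|=O(M^{d})$ sites,
\[
\mathbb{P}[\mathcal{A}^{c}]\leq |C_{M}|\exp(-c'M^{\chi}),
\]
where $\chi$ may be taken either equal to $1$ or strictly less than $d\bigl(\tfrac{13}{4}\beta-\tfrac{9}{4}\bigr)$.

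On $\mathcal{A}$, the escape strategy built in the proof of Proposition \ref{atyquenchedestimate} (traversal of a tube of diameter $O(M^{\beta})$ followed by a column of length $\leq 3M$ of good $M_{0}$-blocks) is completed in at most $T^{*}=O(M)$ lattice steps. Extracting from that explicit construction a time-bounded version of the atypical estimate, one obtains, for every starting point $x\in C_{M}$, the uniform lower bound $P_{x,\omega}[T_{C_M}\leq T^{*},\,X_{T_{C_M}}\in\partial^{+}C_{M}]\geq \tfrac{1}{2}e^{-cM^{\beta}}$. Iterating the strong Markov property at the times $kT^{*}$ for $k=0,\ldots,\lfloor u/T^{*}\rfloor$ then produces on $\mathcal{A}$
\[
P_{0,\omega}[T_{C_{M}}>u]\leq\bigl(1-\tfrac{1}{2}e^{-cM^{\beta}}\bigr)^{\lfloor u/T^{*}\rfloor}\leq \exp\!\Bigl(-\frac{c\,u}{M}\,e^{-cM^{\beta}}\Bigr).
\]

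Combining these contributions gives $P_{0}[T_{C_{M}}>u]\leq \mathbb{P}[\mathcal{A}^{c}]+\exp(-c uM^{-1}e^{-cM^{\beta}})$. With $M=C(\log u)^{\alpha}$, the second summand is below $e^{-c(\log u)^{\alpha}}$ as soon as $\alpha\beta<1$, and the first one enjoys the same bound provided $\chi\geq 1$ (the polynomial factor $|C_{M}|$ being absorbed). The window $\chi<d\bigl(\tfrac{13}{4}\beta-\tfrac{9}{4}\bigr)$ in Proposition \ref{atyquenchedestimate} forces $\beta$ above $(9d+4)/(13d)$; balancing this against the constraint $\alpha\beta<1$ singles out the admissible range $\alpha<1+\tfrac{4(d-1)}{13d+4}$. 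The main technical obstacle is the Markov step: Proposition \ref{atyquenchedestimate} is stated as a control of the exit \emph{position}, and one must revisit the renormalisation scheme behind it to obtain simultaneously a uniform bound over starting positions $x\in C_{M}$ and an effective bound on the number $T^{*}$ of steps required by the escape strategy.
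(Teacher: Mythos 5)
Your proposal follows a genuinely different route from the paper, and it has a real gap. The crucial step is your claim that, on the good-environment event $\mathcal A$, one can extract from the renormalisation scheme a \emph{time-bounded} escape
\[
P_{x,\omega}\left[T_{C_M}\le T^{*},\;X_{T_{C_M}}\in\partial^{+}C_M\right]\ge \tfrac12 e^{-cM^{\beta}},\qquad T^{*}=O(M),
\]
so that you can iterate the Markov property at times $kT^{*}$. Proposition~\ref{atyquenchedestimate} and the underlying Lemma~\ref{quexplowbound} bound $P_{x,\omega}[H_{Top(z)}<T_{V(z)}]$ from below, i.e.\ they control the exit \emph{position}, not the exit \emph{time}: the factors $(1/2)^{J+1}$ come from the good-block property $P_{x,\omega}[X_{T_{\tilde B_2}}\in\partial^+\tilde B_2]\ge 1/2$, and exit from a good block is a.s.\ finite but not deterministically bounded. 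Forcing specific lattice steps via ellipticity does bound the time but only produces the $(2\kappa)^{c\cdot \text{length}}$ factors, and you cannot force the walk straight through the whole column of length $O(M)$ without destroying the desired $e^{-cM^\beta}$ lower bound (the exponent would become $O(M)$, not $O(M^\beta)$). You flag this as ``the main technical obstacle'', but it is not a detail to be revisited; it is exactly the point the proof has to circumvent. There is also an internal inconsistency in the bookkeeping: from $\beta>(9d+4)/(13d)$ and $\alpha\beta<1$ one gets $\alpha<\frac{13d}{9d+4}=1+\frac{4(d-1)}{9d+4}$, which is not the claimed $1+\frac{4(d-1)}{13d+4}$.

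The paper avoids the time-bound issue entirely by working with \emph{expected} exit times and a two-scale structure. It sets a small scale $\Delta(u)\sim \log u$ and $M(u)=N(u)\Delta(u)$ with $N(u)=[(\log u)^{\alpha-1}]$, so $M\sim(\log u)^\alpha$. First it splits on whether every $x\in C_M$ satisfies $P_{x,\omega}[T_{C_M}\le u/(\log u)^\alpha]\ge 1/2$; if so the Markov property at the time scale $u/(\log u)^\alpha$ gives $(1/2)^{[(\log u)^\alpha]}$. On the complementary event, the Green-function identity
\[
E_{x_1,\omega}[T_{C_M}]=\sum_{x\in C_M}\frac{P_{x_1,\omega}[H_x<T_{C_M}]}{P_{x,\omega}[\widetilde H_x>T_{C_M}]}
\]
converts a large expected exit time into the existence of a site $x_2$ with anomalously small escape probability; ellipticity transfers this to a slice $V_{i_0}$, and a telescoping product along the $2N$ slices of width $\Delta$ reduces the bound to Proposition~\ref{atyquenchedestimate} applied at scale $\Delta$ with $\beta=2-\alpha$. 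The constraint $\alpha<\frac{17d}{13d+4}=1+\frac{4(d-1)}{13d+4}$ then drops out of requiring $\chi\ge\alpha$ with $\chi<d(\tfrac{13}{4}(2-\alpha)-\tfrac94)$. Applying the atypical quenched estimate at the smaller scale $\Delta$ rather than at $M$ is essential: it is what makes the expected-time argument close without any deterministic time bound on the escape strategy.
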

\begin{proof}
We pick an $\alpha \in \left(1,1+ \frac{4(d-1)}{13d+4}\right)$ and
consider for large $u$, the following choice of scales:
\begin{align}
\nonumber
  \Delta(u)&=\frac{1}{10\sqrt{d}}\, \frac{\log(u)}{\log\left(\frac{1}{\kappa}\right)}\,\, \mbox{ and }\,\,M(u)= N(u)\Delta(u), \\
  \label{scalestail}
  &\mbox{where } N(u)=\left[(\log(u))^{\alpha-1}\right].
\end{align}
To simplify notation we drop the dependence on $u$ for the remainder of
the proof.

\noindent
In virtue of Lemma \ref{basicexpest}, the claim will follow once we
can prove that:
\begin{equation}\label{tailrelexp}
\limsup_{u}\,\,\log(u)^{-\alpha}\,\log\left(P_0\left[T_{C_M}>u\right]\right)<0.
\end{equation}
Observe that for large $u$, one has (recall (\ref{cube}))
\begin{align}
\nonumber
  P_0\left[T_{C_M}>u\right]\leq \mathbb E\left[\forall x \in C_M, \, P_{x,\omega}\left[T_{C_M}\leq
  \frac{u}{\log(u)^\alpha}\right]\geq \frac{1}{2}, \, P_{0,\omega}\left[T_{C_M}>u\right]\right]\\
  \label{estitail1}
  +\mathbb P\left[\exists x_1 \in C_M, \, P_{x_{1},\omega}\left[T_{C_M}> \frac{u}{\log(u)^\alpha}\right]>\frac{1}{2}\right].
\end{align}
As a result of applying the Markov property, we see that
\begin{align}\nonumber
 &\mathbb E\bigg[\forall x \in C_M, \, P_{x,\omega}\big[T_{C_M}\leq
  \frac{u}{\log(u)^\alpha}\big]\geq \frac{1}{2},\,
  P_{0,\omega}\big[T_{C_M}>u\big]\bigg]\\
  \label{basicb}
  &\leq \left(\frac{1}{2}\right)^{[\log(u)^\alpha]}.
\end{align}
Therefore, in order to prove (\ref{taiest}), we need to obtain an
upper bound as above for the second term on the right-hand side of
(\ref{estitail1}). To this end, notice that when $x_1$ is such that
$P_{x_{1},\omega}[T_{C_M}>\frac{u}{\log(u)^\alpha}]>1/2$,
\begin{equation}
\label{entraandexit}
\frac{1}{2}\, \frac{u}{\log(u)^\alpha}\leq E_{x_{1}, \omega}\left[T_{C_M}\right]=\sum_{x\in C_M}\frac{P_{x_1,\omega}\left[H_x<T_{C_M}\right]}{P_{x,\omega}\left[\widetilde{H}_x>T_{C_M}\right]},
\end{equation}
where $\widetilde{H}_x:=\inf\{n\geq 1:\, X_n=x\}$, $H_x:=H_{\{x\}}$
(cf. (\ref{enandex})). To see how the last equality above is obtained, we calculate
\begin{align}
\nonumber
E_{x_{1}, \omega}\left[T_{C_M}\right]&=\sum_{n\geq
0}P_{x_1,\omega}\left[T_{C_M}> n\right]=\sum _{x\in
C_{M}}\left(\sum_{n\geq 0}P_{x_1,\omega}\left[T_{C_M}>
n,\,X_n=x\right]\right)\\
\label{basicm}
\sum_{x\in C_M}&E_{x_1,\omega}\left[\sum _{j=0}^{T_{C_M}}\mathds{1}_{\{X_j=x\}}\right]=\sum_{x\in C_M}E_{x_1,\omega}\left[\sum_{j\geq 1}\mathds{1}_{\{(H_x)_j<T_{C_M}\}}\right],
\end{align}
where we have defined $(H_x)_1=H_x$ for $x\in C_M$, and then by
recursion for $j>1$:
\begin{equation*}
 (H_x)_j=\widetilde{H}_x \circ \theta_{(H_x)_{j-1}}+(H_x)_{j-1}.
\end{equation*}
Applying the strong Markov property to the last term in
(\ref{basicm}), we get
\begin{align*}
E_{x_{1}, \omega}\left[T_{C_M}\right]=&\sum_{x\in C_M}P_{x_1, \omega}\left[H_x<T_{C_M}\right]\sum_{j\geq 1}P_{x,\omega}\left[\widetilde{H}_x<T_{C_M}\right]^{j-1} \\
=&\sum_{x\in C_M}\frac{P_{x_1, \omega}\left[H_x<T_{C_M}\right]}{P_{x,\omega}\left[\widetilde{H}_x>T_{C_M}\right]}. \\
  \end{align*}
Thus, coming back to (\ref{entraandexit}), one has that there
exists some $x_2\in C_M$ so that $\mathbb P$-a.s.
\begin{equation}
\label{entran}
  P_{x_2,\omega}\left[\widetilde{H}_{x_2}>T_{C_M}\right]\leq |C_M|\frac{2(\log(u))^\alpha}{u}
\end{equation}
holds, on the event in the second term on the rightmost side of (\ref{estitail1}).
Furthermore, notice that when $\omega\in \Omega$ is arbitrary, for $y:=x_2 \in C_M$ as in (\ref{entran}), and $x\in \mathbb Z^d$ a nearest neighbour lattice point to $y+\ell K$ with $0<K\leq\left[\frac{1}{3}\,
\frac{\log{u}}{\log\left(\frac{1}{2\kappa}\right)}\right]$ , by the elliptic assumption (\ref{simplex}) we have that
\begin{equation*}
P_{y, \omega}\left[\widetilde{H}_{y}>T_{C_M}\right]\geq
u^{-\frac{1}{3}}P_{x,\omega}\left[H_y>T_{C_M}\right].
\end{equation*}
Consequently for large $u$ we have $x\in C_M$ and
\begin{equation}
\label{uniell}
P_{x,\omega}\left[H_y>T_{C_M}\right]\leq \frac{1}{\sqrt{u}}.
\end{equation}
Thus, introducing the set
\begin{equation}\label{boundarydelta}
V_i= \partial\left\{y\in \mathbb Z^d:\,
y\cdot\frac{l}{|l|_2}<i\Delta\right\},\,\, \mbox{ for $i\in \mathbb
Z^d$,}
\end{equation}
we have that on the event
\begin{equation}\label{envevenaty}
\mathcal{E}=\bigcup_{x_1\in C_M}\left\{\omega\in \Omega:\,P_{x_1,\omega}\left[T_{C_M}>\frac{u}{(\log(u))^\alpha}\right]>\frac{1}{2} \right\}
\end{equation}
one can find $i_0\in [-N+1,N]$ and $x\in \,C_M \cap V_{i_0}$, such
that
\begin{equation}\label{implicationratyp}
  P_{x,\omega}\left[\widetilde{T}_{(i_0-1)\Delta}^l>T_{C_M}\right]\leq \frac{1}{\sqrt{u}}.
\end{equation}
Let us remark that in order to obtain (\ref{implicationratyp}), we have used (\ref{uniell}), the inequality: $2\Delta+d\leq \left[\frac{1}{3}\,
\frac{\log{u}}{\log\left(\frac{1}{2\kappa}\right)}\right]$ and the fact that for any $y\in \mathbb Z^d$ a closest point to $x-(2\Delta)\ell$, one has that
$$
P_{x,\omega}\left[\widetilde{T}_{(i_0-1)\Delta}^l>T_{C_M}\right]\leq P_{x,\omega}\left[H_{y}>T_{C_M}\right].
$$
It will be convenient to introduce for $i\in \mathbb Z^d$ the
random variables
\begin{equation*}
\mathfrak{X}_i:=\left\{
\begin{array}{ll}
-\log\left( \inf_{x\in C_M \cap V_i }
P_{x,\omega}\left[\widetilde{T}_{(i-1)\Delta}^l>T_{(i+1)\Delta}^l\right]\right)
&\quad{\rm if}\quad C_M\cap V_i\neq\emptyset,\\ 0&\quad{\rm
if}\quad C_M\cap V_i=\emptyset.
\end{array}
\right.
\end{equation*}
The next inequality is a consequence of induction along the strong
Markov property (cf. \cite{Sz00}, pp 128). For $i\in[-N+1, N]$ and
$x\in V_i$,
\begin{equation*}
P_{x,\omega}\left[\widetilde{T}_{(i-1)\Delta}^{l}>T_{C_M}\right]\geq
\exp\left(-\sum_{j=i} ^N\, \mathfrak{X}_i\right).
\end{equation*}
As a result from this last inequality and (\ref{implicationratyp}),
\begin{equation}\label{probofe}
  \mathbb P\left[\mathcal E\right]\leq 2N \sup_{i\in [-N+1, N]}\mathbb P \left[\mathfrak{X}_i\geq \frac{\log u}{2N}\right]
\end{equation}
Note that for $i\in \mathbb Z$, and $\nu>0$ one has
\begin{equation}\label{ineqxi}
\mathbb P\left[\mathfrak{X}_i>\nu\right]\leq |C_M|\mathbb P\left[P_{0,\omega}\left[X_{T_{U_{\Delta}}}\cdot \frac{l}{|l|_2}\geq \Delta\right]\leq e^{-\nu}\right].
\end{equation}
Therefore, using our version of an atypical quenched estimate given
in (\ref{atyquenchedestimate}), we get that whenever
\begin{equation*}
1> 2-\alpha\geq \frac{9d+4}{13d}\,\, \mbox{ (and thus $\alpha\leq
\frac{17d-4}{13d}$)}
\end{equation*}
one has
\begin{equation}
\label{probofe1}
\mathbb P\left[\mathcal E\right]\leq \exp\left(-\widetilde{c}(\log u)^{\chi}\right)
\end{equation}
for all $\chi<d(\frac{13}{4}(2-\alpha)-\frac{9}{4})$ and a suitable
constant $\widetilde{c}=\widetilde{c}(d,\kappa,l)$.

\noindent
In turn, the rightmost term in (\ref{probofe1}) is less than
\begin{equation*}
e^{-\widehat{c}(\log u)^\alpha}
\end{equation*}
for a positive constant $\widehat{c}$, whenever
$\alpha<\frac{17d}{13d+4}\leq \frac{17d-4}{13d}$. The proof is now
complete from this last argument as was mentioned after
(\ref{basicb}).
\end{proof}
We are ready to finish the proof of our main result.
\begin{proof}[Proof of Theorem \ref{mth3}]
We observe that Proposition \ref{tailestimate}, via \textit{layer cake decomposition} (cf. \cite{Ru87},
Chapter 8, Theorem 8.16) implies that there exists a
deterministic constant $M=M(L)$, such that
\begin{equation}
\label{Mltau1}
\mathbb P\left[ \frac{\bar E_0[(\kappa^L\,\tau_1)^3, D'=\infty\mid \mathfrak{F}_{0,L}]}{\bar P_0[D'=\infty \mid \mathfrak{F}_{0,L}]}>M\right]=0.
\end{equation}
The result of Theorem \ref{mth3} follows from the central limit theorem of \cite{CZ02}.
\end{proof}

\section{On Kalikow's Condition}
\label{seclln}
We will introduce in this section Kalikow's condition. We then prove that for a given $\ell\in \mathbb S^{d-1}$ the transient \textbf{(T)}$_\ell$ condition is satisfied whenever Kalikow's condition holds in the same direction. In the last part of this section, we will derive a ballistic strong law of large numbers, which is a slight extension of the main theorem in \cite{RA03}.
\subsection{$(T)$ is weaker than Kalikow's condition}
\begin{definition}
Kalikow's chain $(X_n)_{n\geq0}$ on a connected $V\subsetneq
\mathbb Z^d$ with $0\in V$ is the canonical Markov chain with state
space in $V \cup \partial V$, with transition probabilities given
by
\begin{equation*}
\widehat{P}_V(x,x+e):=\left\{
\begin{array}{ll}
\frac{E_0[\sum_{n=0}^{T_{V^c}}\mathds{1}_{\{X_n=x\}}\omega(x,e)]}
{E_0[\sum_{n=0}^{T_{V^c}}\mathds{1}_{\{X_n=x\}}]} &\quad{\rm
for}\quad x\in V\ {\rm and}\ |e|=1,\\ 1&\quad{\rm for}\quad x\in
\partial V\ {\rm and}\ e=0.
\end{array}
\right.
\end{equation*}
For $x\in V\cup \partial V$ we will denote by $\hat{P}_{x, V}$ and
$\hat{E}_{x,V }$ the law and expectation respectively of the
corresponding Kalikow's chain starting from $x$ with transition
probabilities as above. Setting the local drift
$\hat{d}_{V}(x)=\hat{E}_{x,V}[X_1-X_0]$ at site $x$ of this walk,
we say that Kalikow's condition is satisfied in direction $l\in
\mathbb R^d\backslash\{0\}$ and we denote this by \textbf{(K)}$_l$
if there exists a constant $\delta(l)>0$ such that
\begin{equation}
\label{Kalikow}
\inf_{x\in V, V} \hat{d}_{V}(x)\cdot l>\delta,
\end{equation}
where the infimum runs over all the connected strict subsets $V$ of
$\mathbb Z^d$, with $0\in V$.
\end{definition}

\noindent
We quote here the following result owed to S.
Kalikow \cite{Ka81}, which to some extend depicts the best known
property of Kalikow's chain.
\medskip
\begin{gather}
\nonumber
\mbox{Suppose that $\hat P_{0,V}-$a.s., $T_V$ is finite, then $P_0-$a.s. $T_V$ is also finite, }\\
\label{klemma}
\mbox{and $X_{T_V}$  has the same law under both $\hat P_{0,V}$ and $P_0$.}
\end{gather}
This property will be called as Kalikow's Proposition (see
\cite{Ka81}, Proposition 1 for a proof).

\medskip
Notice that when $|l|_2=1$, a straightforward application of
Cauchy-Schwarz inequality makes us see that the infimum in
(\ref{Kalikow}) is at most equal to $1$. In \cite{CZ01} was assumed
at the nestling example of Section 5 that this infimum is close to
$1$ for $l=e_1$, besides a conditional version of Kalikow's
condition. We will not need these further assumptions here.

\medskip
\noindent
Let us
note that for $n\geq0$,
\begin{equation}\label{martingale}
\mathfrak M_n ^V:= X_n-X_0-\sum_{0\leq j \leq n-1}\hat{d}_V(X_j)
\end{equation}
is a martingale for the canonical filtration of Kalikow's chain
$(X_n)_{n\geq 0}$ starting from $x\in V\cup \partial V$, with state
space in $V\cup \partial V$, where $V$ is a strict connected subset
of $\mathbb Z^d$ with $0\in V$. These martingales have increments
bounded in Euclidean norm by $2$, then Azuma-Hoeffding inequality
(see \cite{ASE92}, page 85) turns out that
\begin{equation}
\label{azumaine}
\hat P_{x,V}[\mathfrak M_n ^V \cdot w>A]\leq \exp\left(\frac{-A^2}{8n}\right) \mbox{ for } A>0, \, n\geq 0, \, |w|_2=1.
\end{equation}
We recall that under Kalikow's condition the process $(H_n)_{n\in \mathbb N}$, defined by (see \cite{Sz00}, pp 101-103 for a proof):
\begin{gather}
\label{Martingale1}
H_n:=\exp\left(-\eta X_n\cdot l\right)\\
\nonumber
\mbox{for all } \eta \in [0,\eta_0], \mbox{ where } \eta_0>0 \mbox{ depends on $\delta$},
\end{gather}
is a supermartingale under $\hat P_{x,V}$, for all strict connected subset $V$ of $\mathbb Z^d$ and $x\in V\cup\partial V$.

\vspace{0.2ex}
Letting $\ell\in \mathbb S^{d-1}$, the main result of this subsection comes in the next
\begin{proposition}
\label{kstrongt}
Assume \textbf{(K)}$_\ell$, then \textbf{(T)}$_\ell$ holds.
\end{proposition}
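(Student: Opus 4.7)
The plan is to reduce to the slab formulation of Lemma \ref{T}(iii), replace the annealed walk under $P_0$ by Kalikow's chain via Kalikow's Proposition (\ref{klemma}), and exploit the drift (\ref{Kalikow}) together with the martingale tools (\ref{martingale})--(\ref{azumaine}) to get exponential decay. Concretely, by Lemma \ref{T}(iii) it suffices to produce some $\mathfrak{r}>0$ such that
\[
\limsup_{M\to\infty} M^{-1}\log P_0\bigl[X_{T_V}\notin \partial^+V\bigr] <0,
\qquad V=B_{M,\mathfrak{r}M,\ell}(0).
\]
Since $V$ is a connected strict subset of $\mathbb{Z}^d$ containing $0$, (\textbf{K})$_\ell$ is available with drift $\geq\delta$. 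Together with the bound $X_n\cdot\ell\leq M+O(1)$ on $V$, the identity $\hat E_{0,V}[X_{n\wedge T_V}\cdot\ell]\geq \delta\,\hat E_{0,V}[n\wedge T_V]$ forces $\hat P_{0,V}[T_V<\infty]=1$, so by (\ref{klemma}) it is enough to bound $\hat P_{0,V}[X_{T_V}\notin \partial^+V]$.

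I would then split the bad set into a back face contribution $\{X_{T_V}\cdot\ell\leq -M\}$ and a lateral contribution $\{X_{T_V}\in\partial V,\ |X_{T_V}\cdot\ell|<M\}$. For the back face, optional stopping of the supermartingale $H_n=\exp(-\eta X_n\cdot\ell)$ from (\ref{Martingale1}) at $T_V\wedge n$, followed by Fatou as $n\to\infty$, gives $\hat E_{0,V}[\exp(-\eta X_{T_V}\cdot\ell)]\leq 1$, and Chebyshev then yields
\[
\hat P_{0,V}\bigl[X_{T_V}\cdot\ell\leq -M\bigr]\leq e^{-\eta M}.
\]

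For the lateral contribution, set $N=\lceil 3M/\delta\rceil$. Writing $X_n\cdot\ell=\mathfrak M^V_n\cdot\ell+\sum_{j<n}\hat d_V(X_j)\cdot\ell\geq \delta n+\mathfrak M^V_n\cdot\ell$, the event $\{T_V>N\}$ forces $\mathfrak M^V_N\cdot\ell\leq M-\delta N\leq -2M$, so (\ref{azumaine}) gives $\hat P_{0,V}[T_V>N]\leq e^{-cM}$. On $\{T_V\leq N\}$ intersected with lateral exit, there is a unit vector $w\perp\ell$ (chosen from a finite orthonormal basis of $\ell^\perp$) with $|(X_{T_V}-X_0)\cdot w|\geq c\,\mathfrak{r}M$; since $|\hat d_V\cdot w|\leq 1$, this forces $|\mathfrak M^V_{T_V}\cdot w|\geq c\,\mathfrak{r}M-N$. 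Taking $\mathfrak{r}$ large enough that $c\,\mathfrak{r}M-N\geq \mathfrak{r}M/2$, Doob's maximal inequality applied to the nonnegative submartingales $\exp(\pm\lambda\,\mathfrak M^V_n\cdot w)$ combined with (\ref{azumaine}) bounds $\hat P_{0,V}[\sup_{n\leq N}|\mathfrak M^V_n\cdot w|\geq \mathfrak{r}M/2]$ by $e^{-cM}$.

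Summing over the finitely many choices of $w$ and combining with the back face and $\{T_V>N\}$ estimates yields $\hat P_{0,V}[X_{T_V}\notin\partial^+V]\leq e^{-cM}$, and (\ref{klemma}) transfers this to $P_0$, verifying item (iii) of Lemma \ref{T} and hence \textbf{(T)}$_\ell$. The main obstacle is coordinating the three scales so that all three Azuma/Doob bounds return a genuine $e^{-cM}$: one must take $N=\Theta(M/\delta)$ so that the drift dominates Azuma's $\sqrt{N}$ fluctuations in direction $\ell$, and simultaneously $\mathfrak r$ large compared with $1/\delta$ so that $\sqrt{N}\ll \mathfrak{r}M$, ensuring lateral fluctuations on the time scale $N$ remain well below $\mathfrak{r}M$.
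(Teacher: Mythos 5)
Your proposal follows the same overall route as the paper: reduce to the slab condition in Lemma~\ref{T}(iii), pass from $P_0$ to $\hat P_{0,V}$ via Kalikow's Proposition~(\ref{klemma}), control $\{T_V>N\}$ with the Kalikow martingale $\mathfrak M^V_n$ and Azuma--Hoeffding~(\ref{azumaine}) using the drift $\delta$, and kill the back-face contribution with the supermartingale $H_n=e^{-\eta X_n\cdot\ell}$ from~(\ref{Martingale1}) together with Chebyshev. The decomposition into $\{T_V>N\}$, back face, and lateral exit is essentially the one used in~(\ref{decomkt})--(\ref{esti2}).

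Where you depart, and improve, is in the treatment of the lateral exit. The paper chooses $\mathrm r=2/\delta$, $N=[\mathrm rM]$ and asserts the identity
$\hat P_{0,B_M}[T_{B_M}\le \mathrm rM,\ X_{T_{B_M}}\cdot\ell<M]=\hat P_{0,B_M}[X_{T_{B_M}}\cdot\ell\le -M]$,
which implicitly treats the event $\{T_{B_M}\le N\}\cap\{X_{T_{B_M}}\cdot\ell<M\}$ as a pure back-face exit; this is not quite right, since paths exiting laterally within $N$ steps are possible (they are extremely rare---essentially straight-line paths when the rotation $R$ aligns a lattice axis with a lateral face---and carry exponentially small $\hat P_{0,V}$-mass, so the final estimate survives, but the stated set equality does not literally hold). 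Your version instead takes $\mathfrak r$ explicitly larger than a multiple of $1/\delta$, observes that lateral exit within time $N$ forces a perpendicular martingale fluctuation $|\mathfrak M^V_{T_V}\cdot w|\gtrsim\mathfrak rM$, and bounds $\hat P_{0,V}[\sup_{n\le N}|\mathfrak M^V_n\cdot w|\ge \mathfrak rM/2]$ via Doob plus Azuma; this cleanly closes the lateral case without relying on the degenerate straight-line accounting. You also spell out why $\hat P_{0,V}[T_V<\infty]=1$ before invoking~(\ref{klemma}), which the paper takes for granted. In short: same toolkit and same skeleton, but your handling of the lateral faces is a genuine tightening of the argument. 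One small wrinkle worth stating explicitly: when choosing the orthonormal frame for $\ell^\perp$, taking $w=R(e_i)$ for $i\ge 2$ gives the constant $c=1$ in ``$|(X_{T_V}-X_0)\cdot w|\ge c\,\mathfrak rM$'', so the constraint becomes simply $\mathfrak r\ge 6/\delta$ with your $N=\lceil 3M/\delta\rceil$.
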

\begin{proof}
Assume condition \textbf{(K)}$_\ell$ and take $\delta>0$ as in the definition (\ref{Kalikow}).
In virtue of item $iii)$ of Lemma \ref{T}, we set $\mathrm r=2/\delta$ and for large $M$ we estimate (cf. (\ref{generalboxes}) for notation):
$$
P_0\left[X_{T_{B_{M,\mathrm{r}M,\ell}(0)}} \notin \partial^+ B_{M,\mathrm{r}M,\ell}(0) \right],
$$
where as usual the underlying rotation $R$ entering in the definition of the box $B_{M,\mathrm{r}M,\ell}(0)$ satisfies $R(e_1)=\ell$.

\vspace{0.2ex}
Notice that denoting $B_M$ the box $B_{M,\mathrm{r}M,\ell}(0)$, one has
\begin{align}
\nonumber
&P_0\left[X_{T_{B_{M,\mathrm{r}M,\ell}(0)}} \notin \partial^+ B_{M,\mathrm{r}M,\ell}(0) \right]\\
\nonumber
&\stackrel{(\ref{klemma})}= \hat P_{0, B_M}\left[X_{T_{B_M}}\cdot \ell < M\right]\leq \hat P_{0, B_M}\left[T_{B_M}>\mathrm{r}M\right]\\
\label{decomkt}
&+\hat P_{0, B_M}\left[T_{B_M}\leq \mathrm{r}M,\, X_{T_{B_M}}\cdot \ell < M \right].
\end{align}
We set $N=[\mathrm{r}M]$ and observe that $\hat P_{0,B_M}$-a.s. on $T_{B_M}>\mathrm{r}M$,
$$
\mathfrak M_N\cdot\ell<-M/2.
$$
Hence, using Azuma-Hoeffding inequality (\ref{azumaine}), we find that
\begin{equation}\label{esti1}
\hat P_{0, B_M}\left[T_{B_M}>\mathrm{r}M\right]\leq \exp\left(-\frac{M^2}{16N}\right).
\end{equation}
On the other hand, applying Chevyshev's inequality and the optional stopping theorem along the supermartingale in (\ref{Martingale1}), we get
\begin{align}
\nonumber
\hat P_{0, B_M}\left[T_{B_M}\leq \mathrm{r}M,\, X_{T_{B_M}}\cdot \ell < M\right]&=\hat P_{0, B_M}\left[ X_{T_{B_M}}\cdot \ell \leq -M\right]\\
\label{esti2}
&\leq \exp\left(-\eta  M\right).
\end{align}
Inserting (\ref{esti1}) and (\ref{esti2}) into (\ref{decomkt}) we complete the proof.
\end{proof}
The class of random environments studied in the present article extends the i.i.d. class. Alongside our ballisticity condition \textbf{(T)}$_{\ell}$ extends the previous i.i.d. condition $(T)|_\ell$ as well (cf. Theorem 1.1 of \cite{Sz02}). Recently in the framework of i.i.d. random environments, we have been able to prove the equivalence $(T)|_\ell \leftrightarrow (T')|_\ell$ (cf. Theorem 2.1 of \cite{GR18}). On the other hand, Sznitman in \cite{Sz03} has constructed ballistic walk examples satisfying $(T')|_\ell$ where Kalikow's condition breaks down, for all dimension $d\geq 3$. Thus, at least for dimensions $d\geq3$, condition \textbf{(T)}$_\ell$ is strictly weaker than \textbf{(K)}$_\ell$.

\subsection{Ballistic Regime under Kalikow's condition}
The next result can be thought as an alternative proof of the law of
large numbers in \cite{RA03} under Kalikow's condition, however a
\textit{slightly more general mixing condition} will be considered.
Precisely one has:
\begin{theorem}
\label{th1}
Let $C, g>0$. Assume that the RWRE fulfils conditions
\textbf{(K)}$_l$ and either \textbf{(SMG)}$_{C, g}$ or
\textbf{(SM)}$_{C,g}$ , then there exists a deterministic vector
$v\in \mathbb R^d\backslash\{0\}$, so that $P_0$-a.s.
\begin{equation*}
\lim_{n\rightarrow\infty}\frac{X_n}{n}=v,
\end{equation*}
with $v\cdot l>0$.
\end{theorem}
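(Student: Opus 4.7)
The plan is to reduce \textbf{(K)}$_l$ to the condition \textbf{(T)} via Proposition \ref{kstrongt}, invoke the approximate renewal structure of Section \ref{section2}, and establish first-moment integrability of the regeneration time using the Kalikow identity for occupation measures combined with the strong mixing assumption.

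By Proposition \ref{kstrongt}, \textbf{(K)}$_l$ (which by scaling is the same as \textbf{(K)}$_{l/|l|_2}$) yields \textbf{(T)}$_{l/|l|_2}$. Using Lemma \ref{T} (the set of \textbf{(T)}-directions is open) together with the density of rational directions, I perturb slightly to a unit vector $\ell$ for which $l' := h\ell \in \mathbb Z^d$ for some $h > 0$, so that (\ref{rul}) holds and the approximate regeneration times $\tau_n^{(L)}$ of Section \ref{Apre} are available for $L \geq L_0$, $L \in |l'|_1 \mathbb N$, with $\tau_n^{(L)} < \infty$ a.s.\ (Lemma \ref{tranunderttau}) and the sandwich of Corollary \ref{corren} in force. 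The crucial step is to show $\overline E_0[\tau_1^{(L)} \mid D' = \infty] < \infty$. Optional stopping applied to the Kalikow-chain martingale $\mathfrak M^{C_M}_n \cdot \ell$ of (\ref{martingale}), combined with $|X_{T_{C_M}} \cdot \ell| \leq M + 1$, gives $\sup_{y \in C_M} \hat E_{y, C_M}[T_{C_M}] \leq 2(M+1)/\delta$; since the occupation measure of the annealed walk on $C_M$ and that of Kalikow's chain solve the same balance equation (the identity underlying Kalikow's Proposition (\ref{klemma})), one obtains the annealed bound $\sup_{y \in C_M} E_y[T_{C_M}] \leq 2(M+1)/\delta$. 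I would then bootstrap this first-moment estimate to polynomial moment bounds $E_0[T_{C_M}^k] \leq C_k M^k$ by iterating the strong Markov property at geometrically spaced restart points and invoking \textbf{(SM)}$_{C,g}$ (or \textbf{(SMG)}$_{C,g}$) to decouple the quenched exit-time functionals of the environment between distant restarts. Inserting $M(u) = A\log u$ with $A$ large into Lemma \ref{basicexpest} then yields the summable tail $\overline P_0[\tau_1^{(L)} > u] = O(u^{-2})$, hence $\overline E_0[\tau_1^{(L)} \mid D' = \infty] < \infty$. Integrability of $|X_{\tau_1^{(L)}}|_2$ comes for free from Corollary \ref{corexp}.

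With both integrabilities in hand, I apply a strong law of large numbers to the almost-i.i.d.\ regeneration increments, which Corollary \ref{corren} sandwiches between genuine i.i.d.\ sequences distributed as $(\tau_1, X_{\tau_1})$ under $\overline P_0[\cdot \mid D' = \infty]$. This yields $\overline P_0$-a.s.\ $\tau_n^{(L)}/n \to v_\tau := \overline E_0[\tau_1 \mid D' = \infty] \in (0,\infty)$ and $X_{\tau_n^{(L)}}/n \to v_X := \overline E_0[X_{\tau_1} \mid D' = \infty]$. Interpolating between consecutive regenerations via $\max_{\tau_n^{(L)} \leq t < \tau_{n+1}^{(L)}} |X_t - X_{\tau_n^{(L)}}|_2 \leq Y \circ \theta_{\tau_n^{(L)}}$ and controlling these errors by Corollary \ref{corexp} and Borel--Cantelli, one concludes $X_n/n \to v := v_X/v_\tau$, $P_0$-a.s. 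Finally, Proposition \ref{remark} gives $v \cdot \ell \geq k_1/(\kappa^L v_\tau) > 0$, and since $\ell$ can be chosen arbitrarily close to $l/|l|_2$, also $v \cdot l > 0$.

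The main obstacle is promoting the Kalikow identity from the first-moment level (where it follows cleanly from the common balance equation for occupation measures) to higher polynomial moments of $T_{C_M}$ under the annealed law. The quenched expected exit time $E_{y,\omega}[T_V]$ is not uniformly bounded in $\omega$, so a naive Markov iteration fails; the bootstrap must route through the strong mixing hypothesis to decouple the environment at successive Markov restarts, and quantitatively balancing the decoupling error against the linear-in-$M$ growth of the basic Kalikow bound is the delicate technical point that substitutes, under \textbf{(K)}$_l$, for the use of assumption (\ref{Rgkappa}) in the proof of Theorem \ref{mth3}.
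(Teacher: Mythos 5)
Your outline hinges on first establishing $\overline E_0[\tau_1^{(L)}\mid D'=\infty]<\infty$ by bootstrapping the Kalikow first-moment bound $E_0[T_{C_M}]\le 2(M+1)/\delta$ to polynomial moments $E_0[T_{C_M}^k]\le C_kM^k$, and then feeding $M(u)=A\log u$ into Lemma~\ref{basicexpest}. This bootstrap is precisely where the argument breaks, and you acknowledge it yourself as ``the delicate technical point.'' The first moment alone gives only $P_0[T_{C_M}>u]\le 2(M+1)/(\delta u)$, hence $\overline P_0[\tau_1>u]=O(\log u/u)$, which is not even summable, so the bootstrap is not a refinement but a necessity; and a naive iteration of the strong Markov property at restart points fails because the \emph{quenched} exit time $E_{y,\omega}[T_V]$ is not uniformly bounded in $\omega$ (it can be of order $\kappa^{-|V|}$), so one needs to rule out the walk repeatedly hitting atypical pockets. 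Controlling this requires exactly the sort of atypical quenched estimate developed in Section~\ref{sectailest}, which in the paper rests on the renormalization machinery and assumption (\ref{Rgkappa}); yet Theorem~\ref{th1} is explicitly stated to hold \emph{without} (\ref{Rgkappa}), so routing through polynomial moments of $T_{C_M}$ undercuts the point of the theorem.

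The paper avoids this entirely by a different route. It first proves the strong law $X_n/n\to v$ with $v$ possibly zero, using the coupling decomposition of Comets--Zeitouni together with Lemma~\ref{Guo1} and Corollary~\ref{corren}: the approximate regeneration increments are sandwiched between i.i.d.\ sequences, claim~(\ref{claim1}) controls $X_{\tau_n}\cdot w/n$, and (\ref{totalosc}) controls the interpolation errors. Crucially this step does not require finiteness of $\overline E_0[\tau_1\mid D'=\infty]$; if it is infinite one simply obtains $v=0$. The non-degeneracy $v\cdot l>0$ is then established separately and cheaply: using the lower bound on $\liminf_m T^l_m/m$ in terms of $\overline E_0[\tau_1\mid D'=\infty]/\overline E_0[X_{\tau_1}\cdot l\mid D'=\infty]$ (display~(\ref{hh})), Fatou's lemma, and Kalikow's ballistic characterization $E_0[T_U]\le\delta^{-1}E_0[X_{T_U}\cdot l]$ (display~(\ref{kball})), one gets $\overline E_0[\kappa^L\tau_1\mid D'=\infty]\le f$ for a constant $f$ independent of $L$. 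Combined with Proposition~\ref{remark} and Corollary~\ref{corexp} this yields $v\ne 0$ and $v\cdot l>0$. In short, the paper trades the hard tail estimate you attempt for a soft Fatou argument anchored by Kalikow's inequality, which is exactly what Kalikow's condition provides for free and which condition~\textbf{(T)}$_\ell$ alone does not.

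Two smaller points. You implicitly conclude with a fixed-$L$ velocity $v=v_X/v_\tau$, but for fixed $L$ the $\tau_n^{(L)}$ are only approximate regenerations, so $v_X/v_\tau$ is only an $e^{-cL}$-approximation of the true velocity; the paper therefore takes the limit $L\to\infty$ in~(\ref{vlimit}) and must justify that this limit exists (via the Cauchy estimate (\ref{estcauchy1})--(\ref{estcauchy2})). Also, your claimed transfer $\sup_{y\in C_M}\hat E_{y,C_M}[T_{C_M}]\le 2(M+1)/\delta\Rightarrow\sup_{y\in C_M}E_y[T_{C_M}]\le 2(M+1)/\delta$ needs the Kalikow chain based at $y$ (by stationarity of $\mathbb P$), not the one based at $0$, since the occupation-measure identity only matches the walk started at the same point as the chain's reference point.
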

Let us begin by recalling the following (cf. \cite{Gu14}, Lemma 9 for a proof)
\begin{lemma}
\label{Guo1}
Let $a\in (0,1)$. Suppose that a sequence $(X_n)_{n\geq 1}$ of
nonnegative random variables satisfies
\begin{equation*}
a\leq\frac{dP[X_{n+1}\in \cdot|X_n, \ldots , X_1]}{d\mu}\leq a^{-1}
\end{equation*}
for all $n\geq 1$, where $P$ and $\mu$ are probability measures.
Setting $m_{\mu}=\int x d\mu(x)$, then $P$-a.s. one has that
\begin{equation*}
  a m_{\mu}\leq\liminf_{\substack{n\rightarrow\infty}} \sum_{k=1}^n X_k/n \leq \limsup_{\substack{n\rightarrow\infty}} \sum_{k=1}^n X_k/n\leq a^{-1} m_{\mu}.
\end{equation*}
\end{lemma}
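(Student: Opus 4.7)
The plan is to reduce the statement to a martingale law of large numbers applied to suitable truncations of the $X_k$'s. First I would handle the bounded-function case: for any bounded measurable $g:[0,\infty)\to[0,\infty)$, the sequence $M_n^g := \sum_{k=1}^n\bigl(g(X_k) - E[g(X_k)\mid\mathcal{F}_{k-1}]\bigr)$, with $\mathcal{F}_k := \sigma(X_1,\ldots,X_k)$, is a martingale with increments bounded by $\|g\|_\infty$. Azuma-Hoeffding combined with the first Borel-Cantelli lemma then gives $n^{-1}M_n^g\to 0$ almost surely. Combined with the hypothesis, which translates into $a\int g\,d\mu \le E[g(X_{k+1})\mid\mathcal{F}_k]\le a^{-1}\int g\,d\mu$, a Cesàro averaging yields $P$-a.s.\
\begin{equation*}
a\int g\,d\mu \;\le\;\liminf_n\frac{1}{n}\sum_{k=1}^n g(X_k)\;\le\;\limsup_n\frac{1}{n}\sum_{k=1}^n g(X_k)\;\le\; a^{-1}\int g\,d\mu.
\end{equation*}

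For the lower bound of the lemma I would then apply this to $g_K(x):=x\wedge K$ for each $K\in\mathbb{N}$ and intersect the resulting countable collection of full-measure events; on this intersection one has $\liminf_n n^{-1}\sum_{k=1}^n X_k \ge \liminf_n n^{-1}\sum_{k=1}^n(X_k\wedge K)\ge a\int(x\wedge K)\,d\mu$ for every $K$, and monotone convergence yields $\liminf \ge a m_\mu$ regardless of whether $m_\mu$ is finite. The upper bound is trivial when $m_\mu=\infty$, so assume $m_\mu<\infty$.

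For the upper bound, a fixed-level truncation cannot conclude since the excess $\sum_{k}(X_k-K)^+/n$ has no $L^\infty$ martingale control, so I would use a $k$-dependent truncation. From $P[X_k>k\mid\mathcal{F}_{k-1}]\le a^{-1}\mu(k,\infty)$ and the elementary $\sum_{k\ge 1}\mu(k,\infty)\le m_\mu$ one obtains $\sum_k P[X_k>k]\le a^{-1}m_\mu<\infty$, whence Borel-Cantelli gives $X_k=X_k\wedge k=:X_k'$ eventually, $P$-a.s. For the truncated sequence the key input is the summability
\begin{equation*}
\sum_{k\ge 1}\frac{E[(X_k')^2]}{k^2}\;\le\; a^{-1}\int\!\left(\sum_{k\ge 1}\frac{(x\wedge k)^2}{k^2}\right)d\mu \;\le\; a^{-1}C\!\int x\,d\mu\;=\;a^{-1}C\,m_\mu<\infty,
\end{equation*}
obtained via Fubini together with the elementary pointwise bound $\sum_{k\ge 1}(x\wedge k)^2/k^2\le Cx$ (splitting the sum at $k=\lfloor x\rfloor$). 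By a standard martingale law of large numbers (Doob's inequality plus Kronecker's lemma applied to the differences $X_k'-E[X_k'\mid\mathcal{F}_{k-1}]$) this summability produces $n^{-1}\sum_{k=1}^n(X_k'-E[X_k'\mid\mathcal{F}_{k-1}])\to 0$ a.s.; since $E[X_k'\mid\mathcal{F}_{k-1}]\le a^{-1}\int(x\wedge k)\,d\mu\to a^{-1}m_\mu$, a final Cesàro averaging combined with the eventual identity $X_k=X_k'$ gives $\limsup_n n^{-1}\sum_{k=1}^n X_k\le a^{-1}m_\mu$, as required.

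The main obstacle is precisely this upper-bound step when $\mu$ has heavy tails (for example, finite mean but no second moment): a fixed-level truncation leaves an uncontrolled tail and the bounded-function martingale argument is unavailable. The growing truncation $X_k\wedge k$ succeeds only because Borel-Cantelli collapses $X_k$ onto $X_k'$ eventually using solely $m_\mu<\infty$, and because the Fubini bound on $\sum_k k^{-2}E[(X_k')^2]$ is tight enough that the same assumption delivers the required second-moment summability.
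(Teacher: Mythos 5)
Your proof is correct. Note that the paper gives no argument of its own for this lemma: it is quoted with a pointer to Lemma 9 of \cite{Gu14}, so there is no in-paper route to compare with; what you have written is a valid self-contained proof. The three steps are sound: (i) for bounded $g\ge 0$ the martingale $M_n^g$ has increments bounded by $\|g\|_\infty$, so Azuma--Hoeffding plus Borel--Cantelli gives $n^{-1}M_n^g\to 0$, and the hypothesis squeezes the Ces\`aro average of the conditional means into $[a\int g\,d\mu,\,a^{-1}\int g\,d\mu]$; (ii) the truncations $x\wedge K$ plus monotone convergence give the lower bound, and your phrasing correctly covers the case $m_\mu=\infty$; (iii) for the upper bound, the growing truncation is exactly the right device: $\sum_k P[X_k>k]\le 1+a^{-1}\sum_k\mu((k,\infty))\le 1+a^{-1}m_\mu<\infty$ collapses $X_k$ onto $X_k\wedge k$ eventually, the pointwise estimate $\sum_{k\ge1}(x\wedge k)^2/k^2\le Cx$ does hold (sum $1$ up to $\lfloor x\rfloor$, then compare the tail of $\sum k^{-2}$ with $1/\lfloor x\rfloor$), and the resulting bound $\sum_k k^{-2}E[(X_k\wedge k)^2]<\infty$ feeds the standard $L^2$-martingale convergence plus Kronecker argument, after which the bound $E[X_k\wedge k\mid\mathcal F_{k-1}]\le a^{-1}m_\mu$ finishes the limsup. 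One immaterial indexing point: the hypothesis only controls the conditional law of $X_{k}$ for $k\ge 2$ (the law of $X_1$ is unconstrained), so the inequalities you invoke ``for all $k$'' should be read for $k\ge 2$; a single exceptional term changes neither the Borel--Cantelli summability nor any Ces\`aro limit, so the conclusion is unaffected.
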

The key result for our proof comes in the next proposition, where a
limiting but possibly vanishing velocity is proven. For $l\in \mathbb Z^d$ we will always assume \textbf{(K)}$_{l}$ (this is not a restriction, see Subsection \ref{Apre}) and either:
\textbf{(SM)}$_{C,g}$ or \textbf{(SMG)}$_{C,g}$. As a result of Proposition \ref{kstrongt}, for $L\in |l|_1\mathbb N$ we can construct the random variable $\tau_1^{(L)}$ along vector $l$.
\begin{proposition}
Assume \textbf{(K)}$_{l}$ an either: \textbf{(SM)}$_{C,g}$ or \textbf{(SMG)}$_{C,g}$. Then there exists $v\in \mathbb R^d$ deterministic, such that $P_0$-a.s.
\begin{equation}
\lim_{\substack{n\rightarrow\infty}} \frac{X_n}{n}\rightarrow v.
\end{equation}
\end{proposition}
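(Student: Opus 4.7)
The plan is to apply Guo's Cesaro-type Lemma~\ref{Guo1} to the approximate renewal structure of Corollary~\ref{corren}. Since Proposition~\ref{kstrongt} gives $(K)_l \Rightarrow (T)_\ell$, the times $\tau_k^{(L)}$ are well-defined and $\overline{P}_0$-a.s.\ finite. Throughout I would fix $L\in|l|_1\mathbb{N}$ large enough for Corollary~\ref{corren} to apply, with factor $\mu(L) = \exp(e^{-gtL})$, and send $L\to\infty$ at the end.

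First I would establish $\overline{E}_0[\tau_1^{(L)} \mid D'=\infty] < \infty$ and $\overline{E}_0[|X_{\tau_1^{(L)}}|_2 \mid D'=\infty] < \infty$. The second is immediate from Corollary~\ref{corexp}. For the first, one cannot invoke Proposition~\ref{tailestimate} because assumption (\ref{Rgkappa}) is not in force here; instead I would rely directly on Kalikow's condition. Using the supermartingale $H_n$ from (\ref{Martingale1}) together with Azuma--Hoeffding on Kalikow's martingale $\mathfrak{M}_n^V$ (exactly as in the proof of Proposition~\ref{kstrongt}), one obtains exponential tail bounds of the form $P_0[T_M^\ell > \lambda M] \leq e^{-cM}$ for $\lambda$ large. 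Combining this with the exponential moments of $X_{\tau_1}\cdot l$ from Proposition~\ref{expmpr} and the identity $\tau_1 = T^l_{X_{\tau_1}\cdot l}$ extracts stretched exponential tails for $\tau_1$, and in particular a finite first moment.

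Next I would apply Lemma~\ref{Guo1} to the scalar sequence $(X_{\tau_k} - X_{\tau_{k-1}})\cdot w$ for a fixed unit $w$, and separately to $\tau_k - \tau_{k-1}$. By Corollary~\ref{corren}, the conditional density of each increment given $\mathcal{G}_{k-1}$ lies in $[\mu(L)^{-1}, \mu(L)]$ with respect to the reference law $\overline{P}_0[\,\cdot\mid D'=\infty]$. Consequently Lemma~\ref{Guo1} yields the $\overline{P}_0$-a.s.\ sandwich
$$
\mu(L)^{-1} m_L^w \leq \liminf_n \frac{X_{\tau_n}\cdot w}{n} \leq \limsup_n \frac{X_{\tau_n}\cdot w}{n} \leq \mu(L)\, m_L^w,
$$
where $m_L^w := \overline{E}_0[X_{\tau_1}\cdot w \mid D'=\infty]$, and an analogous sandwich around $t_L := \overline{E}_0[\tau_1 \mid D'=\infty]$ for $\tau_n/n$. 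Interpolating between renewal epochs using the integrability of $Y\circ \theta_{\tau_k}$ from Corollary~\ref{corexp} (a Borel--Cantelli argument leveraging the approximate independence forces the interpolation error to be $o(n)$), one obtains
$$
\mu(L)^{-2}\, \frac{m_L^w}{t_L} \leq \liminf_n \frac{X_n\cdot w}{n} \leq \limsup_n \frac{X_n\cdot w}{n} \leq \mu(L)^{2}\, \frac{m_L^w}{t_L}.
$$
Since the process $X_n\cdot w/n$ does not depend on $L$, letting $L\to\infty$ (so $\mu(L)\to 1$) pinches both sides to a common deterministic limit $v\cdot w$, and this forces $m_L^w/t_L \to v\cdot w$. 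Running $w$ over a basis yields the deterministic vector $v$; taking $w=\ell$ and using Proposition~\ref{remark} to bound $m_L\cdot l$ away from zero uniformly in $L$ gives $v\cdot l > 0$.

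The main obstacle I anticipate is the first-moment integrability of $\tau_1^{(L)}$ without assumption (\ref{Rgkappa}): the stretched-exponential tail of Proposition~\ref{tailestimate} is unavailable, so the required moment must be extracted from Kalikow's supermartingale and the exponential moments of $X_{\tau_1}\cdot l$ alone, without invoking the renormalization machinery of Section~\ref{sectailest}. A secondary technical point is that Lemma~\ref{Guo1} is scalar; working coordinate-wise as above circumvents this, but one must verify that the sandwich in each direction $w$ pinches to a common $v$ independent of the choice of basis, which follows from linearity of $m_L^w$ in $w$.
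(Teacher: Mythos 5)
Your overall plan --- approximate renewal via Corollary~\ref{corren}, a Cesaro-type law of large numbers for the increments, interpolation between renewal epochs, and pinching as $L\to\infty$ --- matches the paper's strategy, but there are two genuine gaps and one unnecessary detour.

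First, Lemma~\ref{Guo1} is stated for \emph{nonnegative} random variables, and $(X_{\tau_k}-X_{\tau_{k-1}})\cdot w$ is nonnegative only when $w=l/|l|_2$; for the orthogonal directions it takes both signs, and the sandwich $a\,m_\mu\leq \liminf\leq\limsup\leq a^{-1}m_\mu$ is no longer even correctly ordered when $m_\mu$ can be negative. You would need to split each increment into positive and negative parts and run the lemma on each, or do what the paper actually does: a coupling decomposition $Z_i=\tilde Z_i(1-\Delta_i)+\Delta_i W_i$ with an i.i.d.\ reference sequence $(\tilde Z_i)$ distributed as $\overline P_0[Z_1\in\cdot\mid D'=\infty]$ and Bernoulli switches $(\Delta_i)$ with success probability $\exp(-\tilde c L)$. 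The coupling is sign-agnostic and yields claim~(\ref{claim1}) directly; the paper invokes Lemma~\ref{Guo1} only for the $\tau_n$-Cesaro average (nonnegative summands) and uses the coupling for $X_{\tau_n}\cdot w$.

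Second, your proposed route to the first moment of $\tau_1^{(L)}$ does not go through. Azuma--Hoeffding on $\mathfrak M_n^V$ controls $\hat P_{0,V}[T_V>\lambda M]$, i.e.\ the exit time of \emph{Kalikow's auxiliary chain}, and Kalikow's Proposition~(\ref{klemma}) transfers only the law of the exit position $X_{T_V}$ to $P_0$, not the law of the exit time $T_V$. So $P_0[T_M^\ell>\lambda M]\leq e^{-cM}$ does not follow. The moment estimate Kalikow's condition really furnishes is the expectation bound~(\ref{kball}), $E_0[T_U]\leq\delta^{-1}E_0[X_{T_U}\cdot l]$, and the paper uses it --- but only later, in Theorem~\ref{th1}, to show $v\neq 0$.

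Third, and this dissolves the obstacle you flagged: the proposition does \emph{not} require $\overline E_0[\tau_1\mid D'=\infty]<\infty$. The paper explicitly allows $\overline E_0[\tau_1\mid D'=\infty]\in(0,\infty]$. If it is infinite, Lemma~\ref{Guo1} gives $\tau_n/n\to\infty$, hence $k_n/n\to 0$, and the decomposition $X_n/n=(X_{\tau_{k_n}}/k_n)(k_n/n)+(X_n-X_{\tau_{k_n}})/n$ together with the interpolation bound~(\ref{totalosc}) pinches to $v=0$. Ballisticity (hence finiteness of the first moment) is deferred to Theorem~\ref{th1}. So the renormalization machinery of Section~\ref{sectailest} and assumption~(\ref{Rgkappa}) are indeed unneeded here, but because the proposition asks only for a possibly vanishing velocity, not because Kalikow's supermartingale supplies a tail bound on $\tau_1$.
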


\begin{proof}
We complete the unit vector $\frac{l}{|l|_2}$ to form an orthonormal base
of $\mathbb R^d$, which we will denote by
$\mathfrak{V}:=\{\frac{l}{|l|},w_2,\ldots, w_{d-1}\}$. We need the
following claim whose proof will be postponed:

\medskip
\noindent
\begin{center}
\textit{For all vector $w\in \mathfrak{V}$, there exist $\widehat{C}>0$ and $L_0\in |l|_1\mathbb N$ so that for all $L\geq L_0$  one has that}
\end{center}
\begin{equation}
\label{claim1}
\limsup_{\substack{n\rightarrow\infty}}\left|\kappa^L\frac{X_{\tau_n}\cdot w}{n}-\kappa^L \overline{E}_0[X_{\tau_1}\cdot w|D'=\infty]\right|\leq e^{-\widehat{C}L}.
\end{equation}
Assuming the previous claim we can now prove the proposition. Pick
a nondecreasing sequence $(k_n)_{n \geq 0}$, such that $$
\tau_{k_n}\leq n < \tau_{k_n+1}.
$$ By the very definition of the renewal structure we have
$\overline{P}_0$-a.s: $k_n$ goes to $\infty$ as
$n\rightarrow\infty$. Furthermore, with the help of Corollary
\ref{corren} we can use Lemma \ref{Guo1} to see that
\begin{equation*}
\limsup_{\substack{n\rightarrow\infty}}\left|\kappa^L\frac{\tau_n}{n}-\kappa^L \overline{E}_0[\tau_1|D'=\infty]\right|\leq e^{-\overline{C}L}
\end{equation*}
and by the claim
\begin{equation*}
\limsup_{\substack{n\rightarrow\infty}}\left|\kappa^L\frac{X_{\tau_n}}{n}-\kappa^L \overline{E}_0[X_{\tau_1}|D'=\infty]\right|_2\leq e^{-\overline{C}L},
\end{equation*}
for a suitable positive constant $\overline{C}$. Therefore, using
the decomposition
\begin{equation*}
\frac{X_n}{n}=\frac{X_{\tau_{k_n}}}{k_n}\frac{k_n}{n}+\frac{X_n-X_{\tau_{k_n}}}{n},
\end{equation*}
there exists a positive constant $C_6$, so that
\begin{equation*}
\limsup_{\substack{n\rightarrow\infty}}\left|\frac{X_n}{n}-\frac{\overline{E}_0[X_{\tau_1}|D'=]}{\overline{E}_0[\tau_1|D' =\infty]}\right|\leq e^{-C_6L}.
\end{equation*}
where we have used that
\begin{equation*}
\limsup_{\substack{n\rightarrow\infty}}\left|\frac{X_n-X_{\tau_{k_n}}}{n}\right|=0
\end{equation*}
which will be implied once we show that there exists $C_7>0$ such that
\begin{align}
\nonumber
\limsup_{\substack{n\rightarrow\infty}}\bigg|&\frac{\sum_{1\leq j\leq n}\sup_{0\leq i\leq \tau_1}|X_{i\circ \theta_{\tau_j}}-X_{\tau_j}|}{n}-\overline{E}_0[\sup_{0\leq i\leq \tau_1}|X_i| \,|D'=\infty]\bigg|\\
\label{totalosc}
&\leq e^{-C_7L}.
\end{align}
In order to prove (\ref{totalosc}), we apply Lemma \ref{Guo1} together with
Corollary \ref{corren} once again, to get
\begin{align*}
\limsup_{\substack{n\rightarrow\infty}}\bigg|&\frac{\sum_{1\leq j\leq n}\sup_{0\leq i\leq \tau_1}|X_{i\circ \theta_{\tau_j}}-X_{\tau_j}|}{n}-\overline{E}_0[\sup_{0\leq i\leq \tau_1}|X_i| \,|D'=\infty]\bigg|\\
&\leq 1-\exp\left(-2e^{(-gL)/4}\right),
\end{align*}
which implies the claim in (\ref{totalosc}). The proposition
follows now by letting (recall our notation $\tau_1=\tau_1^{(L)}$)
\begin{equation}
\label{vlimit}
v=\lim_{L\rightarrow\infty}\frac{\overline{E}_0[X_{\tau_1}|D'=\infty]}{\overline{E}_0[\tau_1|D'=\infty]},
\end{equation}
with the convention that $L$ in the limit runs over $\mathbb N |l|_1$. To see that such limit exists, notice
$$
\lim_{L\rightarrow\infty}\overline{E}_0[\tau_1|D'=\infty]=\sup_{L \in \mathbb N |l|_1}\overline{E}_0[\tau_1|D'=\infty]\in (0,\infty].
$$
Setting:
$$
\mathfrak T_1(L):=\kappa^L\overline{E}_0[\tau_1|D'=\infty],
$$
by virtue of Proposition \ref{remark}, we have two cases: $\lim_{L\rightarrow\infty}\mathfrak T_1(L)=\infty$ or there exists $k_3\in [k_2,\infty)$ such that $\lim_{L\rightarrow\infty}\mathfrak T_1(L)=k_3$. In the former case, using Corollary \ref{corexp} we have that $v=0$. In the second case we define for integer $n>1$
$$
v_n:=\frac{\overline{E}_0[X_{\tau_1^{(n|l|_1)}}|D'=\infty]}{\overline{E}_0[\tau_1^{(n|l|_1)}|D'=\infty]}.
$$
From the very definition of the renewal structure we have that for large integers $m>n$
\begin{align}
\nonumber
&\bigg|\overline{E}_0\big[X_{\tau_1^{(m|l|_1)}}-X_{\tau_1^{(n|l|_1)}}|D'=\infty\big]\bigg|_2\\
\label{estcauchy1}&\leq \overline{E}_0\bigg[\sup_{0\leq i\leq \tau_1^{((m-n)|l|_1)} } |X_n|_2|D'=\infty\bigg]
\end{align}
and,
\begin{equation}
\label{estcauchy2}\overline{E}_0\bigg[\tau_1^{(m|l|_1)}-\tau_1^{(n|l|_1)}|D'=\infty\bigg]
\leq\overline{E}_0\bigg[\tau_1^{((m-n)|l|_1)}|D'=\infty\bigg].
\end{equation}
Using both estimates (\ref{estcauchy1})-(\ref{estcauchy2}) and Proposition \ref{remark}, it is routine to prove that for large $m$ and $n$ with $m>n$,
$$
|v_m-v_n|\leq 2\kappa^{n|l|_1}.
$$
Therefore the limiting velocity in (\ref{vlimit}) exists.

\noindent
We now turn to prove claim (\ref{claim1}). Let $w\in \mathfrak{V}$
and set (with the notation $\tau_0=0$)
\begin{equation*}
Z_i=\kappa^L (X_{\tau_i}-X_{\tau_{i-1}})\cdot w
\end{equation*}
for integer $i\geq 1$. Using a coupling decomposition argument (cf.
\cite{CZ01}), we can enlarge the probability space where
the sequence $(Z_i)_{i\geq 1}$ is defined. We will still denote the new
probability measure by $\overline{P}_0$ in order to support the
following:
\begin{itemize}
  \item There exist two i.i.d. sequences $(\tilde{Z}_i)_{i\geq 1}$ and $(\Delta_i)_{i\geq1}$ such that $\tilde{Z_1}$ is distributed according to the distribution $\overline{P}_0[Z_1\in \cdot|D'=\infty]$, and $\Delta_1$ is Bernoulli distributed with values onto $\{0,1\}$ and success probability $\overline{P}_0[\Delta_1=1]=\exp(-\tilde{c}L)$, for some suitable and fixed constant $\tilde{c}>0$.
  \item There exists a third sequence $(W_i)_{i\geq1}$ so that for $i\geq 1$ one has that $\Delta_i$ is independent of $W_i$ and the $\sigma$-algebra $\mathcal G_i$ defined by
      \begin{equation*}
        \mathcal G_i=\sigma\left((Z_j)_{j\leq i-1}, (\Delta_j)_{j\leq i-1}\right) ,
      \end{equation*}
      with the convention that $\mathcal G_1$ is the trivial $\sigma$-algebra.
  \item In the new probability space, for integer $i\geq 1$ one has the decomposition:
  \begin{equation*}
    Z_i=\tilde{Z}_i(1-\Delta_i)+\Delta_i W_i.
  \end{equation*}
\end{itemize}
Therefore, one has on that large probability space
\begin{equation}\label{decom1}
  \frac{\kappa^LX_{\tau_{n}}\cdot w}{n}=\frac{\sum_{i=1}^n Z_i}{n}=\frac{\sum_{i=1}^n \tilde{Z}_i}{n}-\frac{\sum_{i=1}^n\tilde{Z}_i\Delta_i}{n}+\frac{\sum_{i=1}^n\Delta_i W_i}{n}.
\end{equation}
We are going now to estimate each one of the terms to the right of
(\ref{decom1}). The strong law of large numbers implies that
$\overline{P}_0$-a.s.
\begin{equation}\label{ineq1}
\frac{\sum_{i=1}^n \tilde{Z}_i}{n}\rightarrow \overline{E}_0[\tilde{Z}_1]=\overline{E}_0[\kappa^L X_{\tau_1}\cdot w|D'=\infty]
\end{equation}
and together with Corollary \ref{corexp}, $\overline{P}_0$-a.s. we
have
\begin{align}
\nonumber
 &\frac{\sum_{i=1}^n\tilde{Z}_i\Delta_i}{n}\rightarrow \overline{E}_0[\tilde{Z}_1\Delta_1]\leq \left(\overline{E}_0[(\kappa^L X_{\tau_1}\cdot w)^2|D'=\infty]\exp(-\tilde{c}L)\right)^{\frac{1}{2}}\\
 \label{ineq2}
 &\leq\exp(-\mathfrak{c}L),
\end{align}
for some positive constant $\mathfrak{c}$.

We next turn to bound from above the third expression on the right
most side of \ref{decom1}. This will be performed following a close
argument to the one of \cite{CZ01}, pp 894-895. Define
$\bar{W}_i:=\overline{E}_0[W_i|\mathcal G_i]$ and
$M_n=\Sigma_{i=1}^n(\Delta_i(W_i-\bar{W}_i))/i$, for integers $i$
and $n$ greater than $0$. Notice that $M_n$ is a $\mathcal
G_n$-martingale centered at $0$. We apply Burkholder-Gundy maximal
inequality (cf. \cite{Wi91}, Section 14.18) and Corollary
\ref{corexp} to get
\begin{equation*}
\overline{E}_0\left[\left|\sup_{n\geq 1}M_n\right|^{2}\right]\leq C_8 \overline{E}_0\left[\sum_{i\geq1}\frac{(\Delta_i(W_i-\bar{W}_i))^2}{i^2}\right]\leq \widetilde{C}_3
\end{equation*}
for some constants $C_8$ and $\widetilde{C}_3$. This implies that
$M_n$ almost surely converges to an integrable random variable.
Consequently, applying now Kronecker's lemma (cf. \cite{Wi91},
Section 12.7), one has that $\overline{P}_0$-a.s.
$H_n:=\Sigma_{i=1}^n (\Delta_i(W_i-\bar{W}_i))/n\rightarrow 0$.
Since $\Delta_i$ is independent of $\mathcal G_i$, using Corollary
\ref{corexp} and Jensen's inequality we get
\begin{align*}
  |\bar{W}_i|&\leq \left(\overline{E}_0[|W_i|^2|\mathcal G_i]\right)^{\frac{1}{2}}\\
  &\leq\left(\exp\left(e^{-(gL)/4}\right)\overline{E}_0[(\kappa^L X_{\tau_1}\cdot w)^2|D'=\infty]\right)^{\frac{1}{2}}\exp\left(\frac{\tilde{c}L}{2}\right)\\
  &\leq \overline{C}_4\exp\left(\frac{\tilde{c}L}{2}\right)
\end{align*}
where $\overline{C}_4>0$ is a constant. Hence
\begin{align}
\nonumber
  \sum_{i=1}^n \frac{\Delta_i \bar{W}_i}{n}&\leq \overline{C}_4\exp\left(\frac{\tilde{c}L}{2}\right)\sum_{i=1}^n\frac{\Delta_i}{n} \\
  \label{ineq3}
  &\stackrel{LLN}\rightarrow \overline{C}_4\, \exp\left(-\frac{\tilde{c}L}{2}\right).
\end{align}
Thus, combining (\ref{ineq1}), (\ref{ineq2}) and (\ref{ineq3}) we
have proven claim (\ref{claim1}).
\end{proof}
We need another auxiliary result in order to prove that the
limiting velocity $v$ is a non-vanishing one. Specifically,
Kalikow's condition admits a ballistic characterization (cf. \cite{SZ99}, pp 1861-1862 for a proof):

\begin{lemma}
For any finite connected set $U$ containing 0,
\begin{equation}\label{kball}
  E_0[T_U]\leq \frac{1}{\delta}\,E_0[X_{T_U}\cdot l]
\end{equation}
where $\delta$ is as in (\ref{Kalikow}) and $T_U$ is defined in
(\ref{enandex}).
\end{lemma}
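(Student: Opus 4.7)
The plan is to derive the inequality by combining the definition of Kalikow's chain with a Green's function (occupation time) identity under the annealed measure $P_0$. The key observation is that Kalikow's drift $\hat{d}_U(x)$ is by construction a weighted average, with weights proportional to the expected number of visits $E_0[\sum_{n=0}^{T_U-1}\mathds 1_{\{X_n=x\}}]$, of the quenched drift $d(x,\omega):=\sum_{|e|=1}e\,\omega(x,e)$. This lets one convert a sum involving $\hat d_U$ into a telescoping sum for the walk under $P_0$.

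First I would set $G_U(x):=E_0\big[\sum_{n=0}^{T_U-1}\mathds 1_{\{X_n=x\}}\big]$ for $x\in U$. Uniform ellipticity (\ref{simplex}) together with finiteness of $U$ yields $E_0[T_U]<\infty$, so $G_U(x)<\infty$ and $\sum_{x\in U}G_U(x)=E_0[T_U]$. Now from the very definition of $\hat P_U$,
\begin{equation*}
\hat d_U(x)\cdot l=\sum_{|e|=1}(e\cdot l)\,\hat P_U(x,x+e)=\frac{1}{G_U(x)}\,E_0\Big[\sum_{n=0}^{T_U-1}\mathds 1_{\{X_n=x\}}\,d(x,\omega)\cdot l\Big],
\end{equation*}
so multiplying by $G_U(x)$ and summing over $x\in U$ produces
\begin{equation*}
\sum_{x\in U}\hat d_U(x)\cdot l\,G_U(x)=E_0\Big[\sum_{n=0}^{T_U-1}d(X_n,\omega)\cdot l\Big].
\end{equation*}

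The next step is to identify the right-hand side. By the quenched Markov property, $d(X_n,\omega)=E_{0,\omega}[X_{n+1}-X_n\mid X_n]$, so the sum telescopes in expectation:
\begin{equation*}
E_0\Big[\sum_{n=0}^{T_U-1}d(X_n,\omega)\cdot l\Big]=E_0\big[(X_{T_U}-X_0)\cdot l\big]=E_0[X_{T_U}\cdot l],
\end{equation*}
where the interchange is justified by the bound $|d(X_n,\omega)\cdot l|\le|l|_2$ and $E_0[T_U]<\infty$. On the left-hand side, condition \textbf{(K)}$_l$ gives $\hat d_U(x)\cdot l\geq \delta$ uniformly for $x\in U$, hence
\begin{equation*}
\sum_{x\in U}\hat d_U(x)\cdot l\,G_U(x)\geq \delta\sum_{x\in U}G_U(x)=\delta\,E_0[T_U].
\end{equation*}
Combining the two displays gives $\delta E_0[T_U]\le E_0[X_{T_U}\cdot l]$, which is (\ref{kball}).

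There is no real obstacle here; the only care required is in justifying $E_0[T_U]<\infty$ (which follows from $U$ finite and uniform ellipticity, independently of the mixing assumption) and in legitimizing the Fubini/Markov exchange that turns the annealed sum of local drifts into the telescoping increment $(X_{T_U}-X_0)\cdot l$. Note that this argument does not appeal to Kalikow's Proposition (\ref{klemma}), which equates the law of $X_{T_U}$ under $P_0$ and $\hat P_{0,U}$; the Green's function identity for $\hat d_U$ is the only place where the definition of Kalikow's chain enters.
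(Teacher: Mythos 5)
Your proof is correct, and it is essentially the same argument as the one the paper points to in Sznitman--Zerner \cite{SZ99} (the paper itself does not reprove the lemma, it only cites that reference): unroll $\hat d_U(x)$ as the $G_U$-weighted average of the quenched drift, sum against the occupation measure $G_U$, identify the result as the telescoped increment $E_0[X_{T_U}\cdot l]$ via quenched optional stopping for the drift martingale (licit since $|l|_2$-bounded increments and $E_0[T_U]<\infty$ by ellipticity on the finite set $U$), and then insert the Kalikow bound $\hat d_U(x)\cdot l\geq\delta$. Your closing remark is also apt: no appeal to Kalikow's Proposition (\ref{klemma}) is needed, only the Green-function form of the definition of $\hat d_U$.
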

We are now ready to prove Theorem \ref{th1}.
\begin{proof}[Proof of Theorem \ref{th1}]
Fixing $L\geq L_0$ with $L\in |l|_1 \mathbb N$, we consider for
$m\geq 0$, the nondecreasing sequence $k'_m$, $P_0$-almost surely
tending to $\infty$ as $m$ does (where as before, we use the
convention $\tau_0^{(L)}=0$), such that
\begin{equation*}
  \tau_{k'_m}^{(L)}\leq T_m^l<\tau_{k'_m+1}^{(L)}.
\end{equation*}
From the definitions of the sequence $(\tau_k^{(L)})_{k\geq1}$ (and
from now on, we drop the index $L$ for $\tau_k^{(L)}$ and
$X_{\tau_k}^{(L)}$), one has that $\overline{P}_0$-a.s.
\begin{equation*}
  l\cdot X_n< l\cdot X_{\tau_k}\leq l \cdot X_{n'},\,\, \mbox{       for      }\,\,0\leq n<\tau_k\leq n'.
\end{equation*}
Hence, for $m\geq 0$, $\overline{P}_0$-a.s.
\begin{equation}\label{einequ}
X_{\tau_{k'_m}}\cdot l\leq X_{T_m^l}\cdot l\leq X_{\tau_{k'_m+1}}
\cdot l
\end{equation}
and on the other hand, one has
\begin{equation}\label{mdeviation}
|X_{T_m}\cdot l-m|_2\leq \sup_{i\in[1,d]}|l_i|.
\end{equation}
Notice first that by Lemma \ref{Guo1} and Corollary \ref{corren}
one has that $\overline{P}_0$-a.s.
\begin{equation}
\label{guoimpl1}
\liminf_{\substack{m\rightarrow\infty}} \frac{k'_m}{X_{\tau_{k'_m}}\cdot l}\geq \exp\left(-e^{-(gL)/4}\right)\frac{1}{\overline{E}_0[X_{\tau_{1}}\cdot l|D'=\infty]},
\end{equation}
together with
\begin{equation}\label{guoimpl2}
\liminf_{\substack{m\rightarrow\infty}}\frac{\tau_{k'_m}}{k'_m}\geq \exp\left(-e^{-(gL)/4}\right)\overline{E}_0[\tau_1|D'=\infty].
\end{equation}
Moreover, a similar argument to the one in (\ref{totalosc}) gives the following upper
bound $\overline{P}_0$-a.s.
\begin{equation}\label{guoimpl3}
\limsup_{\substack{m\rightarrow\infty}}\frac{|(X_{\tau_{k'_m+1}}-X_{\tau_{k'_m}})\cdot l|}{m}=0.
\end{equation}
Hence, by the very definition of the sequence $k'_m$, estimates
(\ref{guoimpl1}), (\ref{guoimpl2}) and (\ref{guoimpl3}); we have
$\overline{P}_0$-a.s.
\begin{align}
\nonumber
  \liminf_{\substack{m\rightarrow\infty}} \frac{T_m^l}{m}&\geq\liminf_{\substack{m\rightarrow\infty}}\frac{\tau_{k'_m}}{m}=\frac{\tau_{k'_m}}{k'_m}\frac{k'_m}{X_{\tau_{k'_m}}\cdot l}\frac{X_{\tau_{k'_m}}\cdot l}{m} \\
  \nonumber
  &\geq \liminf_{\substack{m\rightarrow\infty}}\frac{\tau_{k'_m}}{k'_m}\liminf_{\substack{m\rightarrow\infty}} \frac{k'_m}{X_{\tau_{k'_m}}\cdot l}\liminf_{\substack{m\rightarrow\infty}} \frac{X_{\tau_{k'_m}}\cdot l}{m}\\
  \label{hh}
  &\geq\left(\exp\left(-e^{-(gL)/4}\right)\overline{E}_0[\tau_1|D'=\infty]\right)\left(\frac{\exp\left(-e^{-(gL)/4}\right)}{\overline{E}_0[X_{\tau_1}\cdot l|D'=\infty]}\right),
\end{align}
where to obtain the rightmost estimate in (\ref{hh}), we have used:
\begin{equation*}
  \liminf_{\substack{m\rightarrow\infty}}\frac{X_{\tau_{k'_m}}\cdot l}{m}=\liminf_{\substack{m\rightarrow\infty}} \left(\frac{X_{T_m^l}\cdot l}{m}-\frac{(X_{T_m^l} -X_{\tau_{k'_m}}) \cdot l}{m}\right)=1
\end{equation*}
which is satisfied, by virtue of (\ref{mdeviation}) and
(\ref{guoimpl3}). Furthermore, by an exhaustion of $\{y\in \mathbb
Z^d: y\cdot l<m\}$ by finite subsets of $\mathbb Z^d$, one sees
that applying Lemma \ref{kball} and Fatou's Lemma
\begin{equation*}
  \overline{E}_0\left[\liminf_{\substack{m\rightarrow\infty}}\frac{T_m^l}{m}\right]\leq \liminf_{\substack{m\rightarrow\infty}} \overline{E}_0\left[\frac{T_m^l}{m}\right]\leq \frac{1}{\delta}.
\end{equation*}
Therefore, Kalikow's condition implies that there exists a constant
$f=f(g,d,l,\delta)$ which does not depend on $L$ so that $$
\overline{E}_0[\kappa^L\tau_1|D'=\infty]\leq f.
$$ As a result $v:=\lim_{\substack{L\rightarrow\infty}}
\overline{E}_0[X_{\tau_1}|D'=\infty]/\overline{E}_0[\tau_1|D'=\infty]$
is a non-vanishing limiting velocity and furthermore, there exists
a constant $k_4>0$ such that $v\cdot l\geq k_4$ by Proposition
\ref{remark}.
\end{proof}

\section*{Acknowledgments}
I wish to thank Alejandro Ram\'{\i}rez my former PhD. advisor for
suggesting me this problem and some useful indications about the
paper writing. This work was almost completely done when I held a
postdoctoral position in the beautiful and amazing country of
Brazil at the Universidade Federal do Rio de Janeiro. Therefore I
thank Maria Eulalia Vares and Glauco Valle for the opportunity,
suggestions, corrections and hearing me in weekly held seminars
when I was developing this work. I am pretty sure that
\textit{voc\^{e}s estiverom tor\c{c}endo para que eu conseguisse.}
An anonymous referee has contributed with a detailed report and advised me about several
inaccuracies in a preliminary version, I am grateful for that.
Last but not least, my best thanks to Alain-Sol Sznitman, who
helped me to improve on the result of an earlier version in
Proposition \ref{tailestimate}.

\medskip

\end{document}